\title{Topological Classification of Multiaxial $U(n)$-Actions\\
(with an appendix by Jared Bass)}
\author{Sylvain Cappell\thanks{Research was partially supported by an NSF grant} \\
Courant Institute, New York University \\
\\
Shmuel Weinberger\thanks{Research was partially supported by an NSF grant} \\
University of Chicago \\
\\
Min Yan\thanks{Research was supported by Hong Kong Research Grant Council General Research Fund 604408 and 605610} \\ 
Hong Kong University of Science and Technology}
\DeclareMathOperator{\loc}{loc}
\DeclareMathOperator{\rel}{rel}
\DeclareMathOperator{\iif}{if}
\newcommand{\sub}{\subset}
\newcommand{\pa}{\partial}
\newcommand{\bb}{\mathbb}
\newtheorem{theorem}{Theorem}[section]
\newtheorem{lemma}[theorem]{Lemma}
\newtheorem{corollary}[theorem]{Corollary}
\newtheorem{proposition}[theorem]{Proposition}
\theoremstyle{definition}
\newtheorem*{definition*}{Definition}
\begin{document}
\maketitle

\section{Introduction}

In the last half century great progress has been made on both the differentiable and topological classification of finite group actions on spheres and more general manifolds. Deep, albeit indirect, connections of transformation groups to representation theory were discovered. For positive dimensional groups beyond the case of the circle, essentially the only classification results obtained for differentiable actions are the classical results of M. Davis and W. C. Hsiang \cite{dh} and their further development with J. Morgan \cite{dhm} on concordance classes of multiaxial actions on homotopy spheres, in certain dimension ranges. 

On the other hand, certain topological phenomena, such as  periodicity \cite{wy1} and the replacement of fixed points \cite{cw,cwy} showed that the topological classification of actions of positive dimensional groups must be very different from the smooth case. The present paper begins the classification of topological actions on manifolds by positive dimensional groups beyond the case of the circle, by obtaining general results on multiaxial actions on topological manifolds. Here we will work with a more flexible notion of multiaxial (and without the dimension conditions) than had been considered for smooth actions. An action of a unitary group $U(n)$ on a manifold will be called multiaxial if all of its isotropy subgroups are unitary subgroups, and the corresponding interiors of strata are locally flat submanifolds. Our results will show that topological multiaxial actions are far more profuse and their classification is quite different from the smooth case, even when restricted to spheres. For example, the homology of Grassmannians enters into the classification of topological actions homotopically modeled on multiaxial representation spheres. 

The connection of the theory of topological actions to representation theory is less direct than for smooth actions. This reflects the failure in the topological setting of some of the basic building blocks of the analogous smooth or PL theory of finite group actions. Whitehead torsion, a cornerstone of the classical theory of lens spaces, plays in the general topological category a diminished role because of the absence of (canonical) tubular neighborhoods around fixed points \cite{quinn,st} and more generally around subsets of given orbit types. Indeed, Milnor's counterexamples to the Hauptvermutung showed that classical Whitehead torsion is not even always definable in the topological category for non-free actions. The divergence for actions of finite groups of the topological classification from the smooth or PL ones was strikingly reflected in the existence of non-linear similarities between some linearly inequivalent representations \cite{cs1}.Ê

On the other hand, key invariants defined in smooth or PL settings using the equivariant signature operator do remain well defined in topological settings \cite{csw,hp,mr1} and play a major role there.Ê

In this introduction, for simplicity and ease of exposition, we make the stronger assumption that $G=U(n)$ acts locally smoothly (thus also assuring local flatness of fixed sets). In other words, every orbit has a neighborhood equivariantly homeomorphic to an open subset of an orthogonal representation of $G$. Moreover, we concentrate on the classical and more restrictive notion of multiaxial actions, for which the orthogonal representations are of the form $k\rho_n\oplus j \epsilon$, where $\rho_n$ is the defining representation of $U(n)$ on ${\bb C}^n$ and $\epsilon$ is the trivial representation. While this allows for different choices of $k$ and $j$ at different locations in a manifold, the results presented in the introduction will assume the same $k$ and $j$ everywhere. In such a setting, we say the manifold is modeled on $k\rho_n\oplus j\epsilon$. Examples are the representation $k\rho_n\oplus j \epsilon$ and the associated representation sphere. 

The isotropy subgroups of a multiaxial $U(n)$-manifold $M$ are conjugate to the specific unitary subgroups $U(i)$ of $U(n)$ that fixes the subspace $0\oplus{\bb C}^{n-i}$ of ${\bb C}^n$. Then $M$ is stratified by $M_{-i}=U(n)M^{U(i)}$, the set of points fixed by some conjugate of $U(i)$. Correspondingly, the orbit space $X=M/U(n)$ is stratified by $X_{-i}=M^{U(i)}/U(n-i)$. 

Our goal is to study the isovariant structure set $S_{U(n)}(M)$. Classically, the structure set $S(X)$ of a compact topological manifold $X$ is the homeomorphism classes of topological manifolds equipped with a simple homotopy equivalence to $X$ (with homotopy and homeomorphism defining the equivalence relation). The notion can be extended to the setting of a $G$-manifold $M$ by letting $S_G(M)$ denote the equivariant homeomorphism classes of $G$-manifolds each equipped with an isovariant simple homotopy equivalence to $M$. It can also be extended to $S(X)$ for stratified spaces $X$ and stratified simple homotopy equivalences. We have $S_G(M)=S(M/G)$ when the orbit space $M/G$ is homotopically stratified \cite{we1}.

Classical surgery theory formulates $S(X)$ initially in terms of $s$-cobordism classes and then employs the $s$-cobordism theorem to reformulate this in terms of the more meaningful homeomorphism classification. The topological isovariant surgery theory of \cite{we1} similarly employs the stratified (and thus the equivariant) $s$-cobordism theorem of Quinn \cite{quinn} and of Steinberger \cite{st}\footnote{It is in fact more convenient to study the classification of topological manifolds in the slightly larger (by at most a ${\bb Z}$ summand \cite{quinn2,quinn3}) setting of $s$-cobordism classes of homology manifolds. Similar considerations apply to $S_G(M)$. The results of the present paper are stated in this slightly larger setting.}.

Let $X_{\alpha}$ be the strata of a stratified space $X$. The pure strata 
\[
X^{\alpha}=X_{\alpha}-X_{<\alpha},\quad X_{<\alpha}=\cup_{X_{\beta}\subsetneq X_{\alpha}}X_{\beta}
\]
are generally noncompact manifolds, and we have natural restriction maps
\[
S(X)\to \oplus S^{\text{proper}}(X^{\alpha}).
\]
Here $S^{\text{proper}}$ denotes the proper homotopy equivalence version of the structure set. If we further know that all pure strata of links between strata of $X$ are connected and simply connected (or more generally, the fundamental groups of these strata have trivial $K$-theory in low dimensions, according to Quinn \cite{quinn}), then the complement $\bar{X}^{\alpha}$ of (the interior of) a regular neighborhood of $X_{<\alpha}$ in $X_{\alpha}$ is a topological manifold with boundary $\pa \bar{X}^{\alpha}$ and interior $X^{\alpha}$, and the restriction maps natually factor through the structures of $(\bar{X}^{\alpha},\pa \bar{X}^{\alpha})$
\[
S(X)\to \oplus S(\bar{X}^{\alpha},\pa \bar{X}^{\alpha})
\to \oplus S^{\text{proper}}(X^{\alpha}).
\]
The difference between the simple homotopy structure of $(\bar{X}^{\alpha},\pa \bar{X}^{\alpha})$ and the proper homotopy structure of $X^{\alpha}$ is captured by the finiteness obstruction at infinity and related Whitehead torsion considerations. 

The pure strata of links are indeed connected and simply connected for multiaxial $U(n)$-manifolds. Our main result states that the stratified simple homotopy structure set of $X=M/U(n)$ is almost always determined by the restrictions to $S(\bar{X}^{-i},\pa \bar{X}^{-i})$ using a particular half of the set of strata $X_{-i}$. More general versions are given by Theorems \ref{ssplit}, \ref{ssplit3}, \ref{ssplit2}.

\begin{theorem}\label{mainth1}
Suppose $M$ is a multiaxial $U(n)$-manifold modeled on $k\rho_n\oplus j\epsilon$, and $X=M/U(n)$ is the orbit space. 
\begin{enumerate}
\item If $k\ge n$ and $k-n$ is even, then we have natural splitting
\[
S_{U(n)}(M)
=\oplus_{i\ge 0}S(\bar{X}^{-2i},\pa\bar{X}^{-2i})
=\oplus_{i\ge 0}S^{\text{\rm alg}}(X_{-2i},X_{-2i-1}).
\]
\item If $k\ge n$, $k-n$ is odd and $M=W^{U(1)}$ for a multiaxial $U(n+1)$-manifold modeled on $k\rho_{n+1}\oplus j\epsilon$, then we have natural splitting
\[
S_{U(n)}(M)
=S^{\text{\rm alg}}(X)\oplus\left(\oplus_{i\ge 1}S(\bar{X}^{-2i+1},\pa\bar{X}^{-2i+1})\right)
=S^{\text{\rm alg}}(X)\oplus\left(\oplus_{i\ge 1}S^{\text{\rm alg}}(X_{-2i+1},X_{-2i})\right).
\]
\item If $k\le n$, then $S_{U(n)}(M)=S_{U(k)}(M^{U(n-k)})$. Since $M^{U(n-k)}$ is a multiaxial $U(k)$-manifold modeled on $k\rho_k\oplus j\epsilon$, this case is reduced to $k=n$ treated in part 1. 
\end{enumerate}
\end{theorem}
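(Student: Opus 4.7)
My plan is to dispatch part 3 as a direct reduction via the slice theorem, and then to obtain parts 1 and 2 by running the stratified surgery restriction map from the introduction, controlled by a parity analysis of the links of strata.

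For part 3, I would use that when $k\le n$, each nonzero point of $k\rho_n\oplus j\epsilon$ is a $k$-tuple of vectors in $\bb C^n$, so its isotropy must contain the pointwise stabilizer of (at most) a $k$-dimensional subspace, which is conjugate to $U(n-k)$. Consequently $M=U(n)\cdot M^{U(n-k)}$, and the Weyl-group quotient $N_{U(n)}(U(n-k))/U(n-k)=U(k)$ acts on $M^{U(n-k)}$ with the same orbit space as $M/U(n)$. The identification of orbit spaces respects the stratifications, giving $S_{U(n)}(M)=S_{U(k)}(M^{U(n-k)})$; and since $(k\rho_n)^{U(n-k)}=k\bb C^k$, the fixed set $M^{U(n-k)}$ inherits the multiaxial $U(k)$-model $k\rho_k\oplus j\epsilon$, reducing to the $k=n$ case.

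For parts 1 and 2 I would identify $S_{U(n)}(M)=S(X)$ with $X=M/U(n)$ and analyze the restriction map $S(X)\to\oplus_i S(\bar X^{-i},\pa\bar X^{-i})$ from the introduction. By the slice theorem, the link of a point of $X^{-i-1}$ in $X$ is $S(k\rho_{i+1})/U(i+1)$, a stratified Grassmannian-type space whose pure strata are simply connected when $k\ge n$, so the introductory machinery applies and the restriction factors through manifold-with-boundary structures. I would then compute, inductively from the deepest stratum $X_{-n}$ upward, the surgery contribution of each adjacent pair $(X_{-i},X_{-i-1})$. The parity of $k-n$ propagates through the successive slice representations $k\rho_i$ and determines whether the corresponding Wall $L$-group sits in an even or odd degree on each stratum. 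In the ``wrong parity'' degrees the obstruction vanishes so that stratum contributes nothing and adjacent pairs collapse into single summands; in the ``right parity'' degrees the contribution realizes as $S^{\text{\rm alg}}(X_{-i},X_{-i-1})$ and is identified with $S(\bar X^{-i},\pa\bar X^{-i})$ via the manifold-with-boundary reduction. This yields the splittings indexed by even $i$ in part 1 and odd $i$ in part 2.

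The extra summand $S^{\text{\rm alg}}(X)$ in part 2 is an end-effect: when $k-n$ is odd the top stratum $X=X_0$ falls in the surviving parity class but has no companion lower stratum with which to form a manifold-with-boundary pair. The hypothesis $M=W^{U(1)}$ furnishes an ambient $U(n+1)$-manifold whose $U(1)$-normal direction supplies exactly the collar structure needed to rigidify the top algebraic stratum into a topological summand $S^{\text{\rm alg}}(X)$ rather than forcing it back to an $S(\bar X^{-i},\pa\bar X^{-i})$ of the previous type. I expect the main obstacle to be the parity-vanishing step: showing that the ``wrong parity'' contributions systematically vanish requires a careful $L$-theoretic computation on the link quotients $S(k\rho_i)/U(i)$ and a coordinated induction up the tower of strata, which is presumably the technical content of the more general splitting Theorems~\ref{ssplit}, \ref{ssplit3}, and~\ref{ssplit2}.
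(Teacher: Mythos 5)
Part 3 and the general setup are fine, but the core of parts 1 and 2 rests on a mechanism that is not the one that actually works, and the step you defer at the end (``the parity-vanishing step'') is precisely the mathematical content of the theorem. The splitting is not produced by Wall groups vanishing in ``wrong parity'' degrees. It comes from the links between \emph{adjacent} strata: by Lemma \ref{link}, the link of $X_{-j}$ in $X_{-j+1}$ is homotopy equivalent to ${\bb C}P^{r_j}$ with $r_j=r_1+j-1$ and first gap $r_1=k-n$, and the link fibration has trivial monodromy. When $r$ is even, ${\bb C}P^{r}$ has signature one, so by the L\"uck--Ranicki up-down formula the surgery transfer ${\bb L}(X_{-1})\to{\bb L}(E)$ along the link fibration is a homotopy equivalence; this makes the surgery spectrum of the regular neighborhood of $X_{-1}$ contractible and yields the natural splitting ${\bb L}(X)={\bb L}(X,\rel X_{-2})\oplus{\bb L}(X_{-2})$ (Theorem \ref{generalsplit}), hence ${\bb S}(X)={\bb S}(X,\rel X_{-2})\oplus{\bb S}(X_{-2})$, which is then iterated because the first gap of $M^{U(2)}$ is $r+2$, again even. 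Your proposal never identifies these adjacent links or their parity (you only mention the Grassmannian-type links of points in deep strata), and the signature-one/transfer argument --- the place where the parity of $k-n$ actually enters --- is absent. Note also that the summands are indexed by pairs $(X_{-2i},X_{-2i-1})$ because the odd stratum is absorbed into the relative term, where its local contribution dies by the $\pi$--$\pi$ theorem after the same periodicity reduction, not because an odd-degree $L$-group vanishes.

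For part 2, the hypothesis $M=W^{U(1)}$ is used more formally than your ``collar'' picture suggests. Setting $Y=W/U(n+1)$ one has $X=Y_{-1}$, and the first gap of $Y_{-2}$ in $Y$ is $k-(n+1)$, which is even, so the splitting above applies to $Y$; since the restriction ${\bb S}(Y)\to{\bb S}(Y_{-2})$ factors through ${\bb S}(Y_{-1})={\bb S}(X)$, its split surjectivity forces ${\bb S}(X)\to{\bb S}(X_{-1})$ to be split surjective, giving ${\bb S}(X)={\bb S}(X,\rel X_{-1})\oplus{\bb S}(X_{-1})$. The first summand is ${\bb S}^{\text{alg}}(X)$ by Theorem \ref{generalsplit3}, and the second has even first gaps so part 1 applies to it. To close the gap you would need to supply the transfer computation (Proposition \ref{periodicity}) and this factoring argument.
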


The condition $k\le n$ was always assumed in the results of \cite{davis,dh,dhm} on differentiable actions. For the reduction to the case $k=n$, see Lemma \ref{hereditary} and the subsequent discussion.

The {\em algebraic structure set} $S^{\text{alg}}$ in the theorem denotes the following familiar homotopy functor \cite{ra1}.

\begin{definition*}
For any (reasonable) topological space $X$, let ${\bb S}^{\text{alg}}(X)$ be the homotopy fibre of the surgery assembly map ${\bb H}_*(X;{\bb L})\to {\bb L}(\pi_1X)$. Then $S^{\text{alg}}(X)=\pi_{\dim X}{\bb S}^{\text{alg}}(X)$.
\end{definition*}

In the definition, ${\bb L}(\pi)$ is the (simple) surgery obstruction spectrum for the fundamental group $\pi$, and ${\bb H}_*(X;{\bb L})$ is the homology theory associated to the spectrum ${\bb L}={\bb L}(e)$. If $X$ is a topological manifold of dimension $\ge 5$ (or dimension $4$ in case $\pi_1X$ is not too bad \cite{freedman}), then $S^{\text{alg}}(X)$ is the usual structure set that classifies topological (in fact, homological) manifolds simple homotopy equivalent to $X$. For a general topological space $X$, however, $S^{\text{alg}}(X)$ no longer carries that geometrical meaning and is for the present purpose the result of some algebraic computation.

Notice that the expression in terms of $S^{\text{\rm alg}}(X_{-i},X_{-i-1})$ involves only objects that are a priori associated to the group action.  However, the map from the left hand side to the right hand side, while related to the forgetful map to $S(\bar{X}^{-i},\pa\bar{X}^{-i-1})$, is not quite obvious to define.

For a taste of what to expect when $k$ and $j$ are not assumed constant, the following is the simplest case of Theorem \ref{ssplit3}. The proof is given at the end of Section \ref{structure}.

\begin{theorem}\label{mainth2}
Suppose the circle $S^1$ acts semifreely and locally linearly on a topological manifold $M$, such that the fixed points $M^{S^1}$ is a locally flat submanifold. Let $M_0^{S^1}$ and $M_2^{S^1}$ be the unions of those connected components of $M^{S^1}$ that are, respectively, of codimensions $0$ mod $4$ and $2$ mod $4$. Let $N$ be the complement of (the interior of) an equivariant tube neighborhood of $M^{S^1}$, with boundaries $\pa_0N$ and $\pa_2N$ corresponding to the two parts of the fixed points. Then
\[
S_{S^1}(M)=S(M_0^{S^1})\oplus S(N/S^1,\pa_2N/S^1,\rel \pa_0N/S^1).
\]
\end{theorem}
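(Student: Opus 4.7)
The plan is to treat $S_{S^1}(M)=S(X)$ with $X=M/S^1$ as the structure set of a two-stratum stratified space, and to derive the claimed splitting by applying Theorem~\ref{ssplit3} component by component to the fixed set according to the parity of its codimension. Since the action is semifree and locally linear with locally flat fixed set, $X$ is homotopically stratified with top stratum $(M\setminus M^{S^1})/S^1$ and singular stratum $M^{S^1}=M_0^{S^1}\sqcup M_2^{S^1}$, so $S_{S^1}(M)=S(X)$ by \cite{we1}.

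First I would identify the links. At a fixed point whose slice representation is $k\rho_1\oplus j\epsilon$, the orbit-space slice is the cone on ${\bb C}P^{k-1}$ crossed with $D^j$, so the link of the fixed stratum in $X$ is ${\bb C}P^{k-1}$. Its signature is $0$ when $k$ is even (the components of $M_0^{S^1}$, of real codimension $\equiv 0\pmod 4$) and $1$ when $k$ is odd (the components of $M_2^{S^1}$, of real codimension $\equiv 2\pmod 4$). This is precisely the parity distinguishing the two cases of Theorem~\ref{mainth1}, now occurring simultaneously in different components of the fixed set.

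Second I would apply Theorem~\ref{ssplit3} separately on each fixed component. On an $M_0^{S^1}$ component the link signature vanishes, so the stratified surgery exact sequence splits off an $S(M_0^{S^1})$ direct summand; geometrically, any structure on this fixed stratum is realized by modifying it via an $s$-cobordism and extending the change trivially through the normal cone bundle into $N$, without altering anything on the complement. On an $M_2^{S^1}$ component the link signature equals $1$, so the associated ${\bb C}P^{k-1}$-transfer is an isomorphism on the relevant $L$-theory; changes of the fixed stratum are forced by changes of the adjacent boundary $\pa_2N/S^1$, and the residual information is encoded by the relative structure set $S(N/S^1,\pa_2N/S^1,\rel \pa_0N/S^1)$. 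The constraint of fixing $\pa_0N/S^1$ matches the independent freedom already expended on the $M_0^{S^1}$ side.

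Combining the two restrictions yields the stated isomorphism. The main obstacle I anticipate is verifying that the two restrictions really define a splitting of $S(X)$, and not merely pieces in a longer exact sequence. This reduces to the $L$-theoretic computation that the transfer along a ${\bb C}P^{k-1}$-bundle neighborhood is multiplication by $\sigma({\bb C}P^{k-1})$, whose being $0$ or a unit in $L^0({\bb Z})$ dictates whether the corresponding portion of the stratified algebraic surgery spectrum splits off or is absorbed into the adjacent stratum. Simple connectivity of each ${\bb C}P^{k-1}$ allows uniform use of Quinn's $s$-cobordism theorem and transversality, and once the transfer calculation is in place, the identification of the geometric restrictions with the algebraic splitting provided by Theorem~\ref{ssplit3} completes the argument via a diagram chase in the Ranicki-style surgery exact sequence.
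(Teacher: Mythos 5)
Your overall architecture coincides with the paper's: the codimension $2 \bmod 4$ components (links ${\bb C}P^{\text{even}}$, signature $1$) are absorbed into the complement via the transfer isomorphism, while the codimension $0 \bmod 4$ components (links ${\bb C}P^{\text{odd}}$, signature $0$) contribute $S(M_0^{S^1})$ as a direct summand. The first half is indeed exactly Theorem \ref{ssplit3}, which here (all deeper strata being empty) yields ${\bb S}_{S^1}(M)={\bb S}_{S^1}(\overline{M-N_2},\pa N_2)$.

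The gap is in the second half. Theorem \ref{ssplit3} does not split off $S(M_0^{S^1})$: its first summand is the relative structure set of the complement of $N_2$, and nothing more. Your geometric justification --- that ``any structure on this fixed stratum is realized by modifying it via an $s$-cobordism and extending the change trivially through the normal cone bundle'' --- does not work as stated. An element of $S(M_0^{S^1})$ is a simple homotopy equivalence $\phi\colon F\to M_0^{S^1}$, not an $s$-cobordism, and to realize it as the restriction of an isovariant structure one must produce a structure on the ${\bb C}P^{k-1}$-block bundle $\pa_0N/S^1$ lying over $\phi$ that agrees with the identity where it meets the untouched complement; this extension is obstructed in general, and its obstruction vanishes here only because the transfer is multiplication by the intersection form of ${\bb C}P^{\text{odd}}$, which is hyperbolic, via the product formula of L\"uck--Ranicki and Morgan. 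That is precisely the fixed-set replacement theorem of Cappell--Weinberger (and \cite{cwy}), which the paper invokes at this point to conclude that the restriction ${\bb S}_{S^1}(\overline{M-N_2},\pa N_2)\to{\bb S}(M_0^{S^1})$ is split surjective with kernel the stated relative structure set. Your closing paragraph does identify the transfer computation as the crux, but the passage from ``transfer $=0$'' to split surjectivity on structure sets is the replacement theorem, not a formal diagram chase in the surgery exact sequence; it is also what allows one to dispense with the $U(2)$-extendability hypothesis needed in part 2 of Theorem \ref{mainth1}.
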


We note that $N/S^1$ is a manifold with boundary divided into two parts $\pa_0$ and $\pa_2$. The second summand means the homeomorphism classes of manifolds simple homotopy equivalent to $N/S^1$ that restricts to a simple homotopy equivalence on $\pa_2$ and a homeomorphism on $\pa_0$. We also note that it is a special feature of the circle action that the condition of the extendability of $M$ to a multiaxial $U(2)$-manifold is not needed.  It is an open question whether or not, in general, one can dispense with the extendability condition in part 2 of Theorem \ref{mainth1}.

For $k\ge n$, the terms $S(\bar{X}^{-i},\pa\bar{X}^{-i})$ in the decompositions of Theorem \ref{mainth1} could be reformulated in terms of the isovariant structure set
\[
S(\bar{X}^{-i},\pa\bar{X}^{-i})
=S_{U(n-i)}(M^{U(i)},\rel U(n-i)M^{U(i+2)}).
\]
Here $M^{U(i)}$ is actually a multiaxial $U(n-i)$-manifold modeled on $k\rho_{n-i}\oplus j\epsilon$, and $U(n-i)M^{U(i+2)}$ is the stratum of the multiaxial $U(n-i)$-manifold two levels down. The right side classifies those $U(n-i)$-manifolds isovariantly simple homotopy equivalent to $M^{U(i)}$, such that the restrictions to the stratum two levels down are already equivariantly homeomorphic. The decomposition in Theorem \ref{mainth1} is then equivalent to the decomposition
\[
S_{U(n)}(M)
=S_{U(n)}(M,\rel U(n)M^{U(i)})\oplus S_{U(n-i)}(M^{U(i)}),\text{ for } k-n+i\text{ even}.
\]
The map to the second summand is the obvious restriction. The fact that this restriction is onto has the following interpretation.

\begin{theorem}\label{mainth3}
Suppose $M$ is a multiaxial $U(n)$-manifold modeled on $k\rho_n\oplus j\epsilon$. Suppose $k\ge n>i$, $k-n+i$ is even, and additionally, when $k-n$ is odd, we have $M=W^{U(1)}$ for a multiaxial $U(n+1)$-manifold $W$ modeled on $k\rho_{n+1}\oplus j\epsilon$. Then for any $U(n-i)$-isovariant simple homotopy equivalence $\phi\colon V\to M^{U(i)}$, there is a $U(n)$-isovariant simple homotopy equivalence $f\colon N\to M$, such that $\phi=f^{U(i)}$ is the restriction of $f$. 
\end{theorem}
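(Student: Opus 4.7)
The plan is to read off Theorem \ref{mainth3} from the direct sum decomposition
\[
S_{U(n)}(M)=S_{U(n)}\bigl(M,\rel U(n)M^{U(i)}\bigr)\oplus S_{U(n-i)}(M^{U(i)})
\]
recorded immediately before the statement, and then to upgrade the surjectivity it encodes into the on-the-nose equality $f^{U(i)}=\phi$ demanded by the theorem.

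First I would justify this decomposition from Theorem \ref{mainth1}. The hypotheses ($k\ge n$, $k-n+i$ even, and $M=W^{U(1)}$ for a suitable multiaxial $U(n+1)$-manifold $W$ in the $k-n$ odd case) are precisely what is needed for Theorem \ref{mainth1} to apply to $M$. The fixed set $M^{U(i)}$ is itself a multiaxial $U(n-i)$-manifold modeled on $k\rho_{n-i}\oplus j\epsilon$, and since $k-(n-i)=k-n+i$ is even, part 1 of Theorem \ref{mainth1} applies to it unconditionally. Under the identification of strata $(M^{U(i)})^{U(\ell)}/U(n-i-\ell)=X_{-(i+\ell)}$, the splitting of $S_{U(n-i)}(M^{U(i)})$ produced by Theorem \ref{mainth1} takes the form $\oplus_{\ell\ge 0}S(\bar{X}^{-(i+2\ell)},\pa\bar{X}^{-(i+2\ell)})$, and these are exactly the summands of the Theorem \ref{mainth1} splitting of $S_{U(n)}(M)$ coming from strata of index $\ge i$ (the parities match because the hypothesis forces $i$ to have the same parity as $k-n$). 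The remaining summands, those from strata of index $<i$ together with the $S^{\text{alg}}(X)$ piece in the $k-n$ odd case, assemble into $S_{U(n)}(M,\rel U(n)M^{U(i)})$, giving the displayed decomposition. By construction the projection onto the second summand is the restriction map $[f]\mapsto[f^{U(i)}]$, so this restriction is surjective, and the given $\phi$ is therefore realized, up to a $U(n-i)$-equivariant homeomorphism $h\colon V\to N_0^{U(i)}$ with $f_0^{U(i)}\circ h=\phi$, as the restriction of some isovariant simple homotopy equivalence $f_0\colon N_0\to M$.

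Finally I would promote this class-level statement to the pointwise equality demanded by the theorem. Using a $U(n)$-equivariant tubular neighborhood of $N_0^{U(i)}$ inside $N_0$, available from the local linearity of the action and the local flatness of the strata, I extend $h$ to a $U(n)$-equivariant homeomorphism $\tilde h\colon N\to N_0$ from a $U(n)$-manifold $N$ with $N^{U(i)}=V$. Setting $f=f_0\circ\tilde h\colon N\to M$ then gives a $U(n)$-isovariant simple homotopy equivalence with $f^{U(i)}=\phi$, as required. The only step with any real content beyond the bookkeeping of the decomposition is this extension of $h$ across the equivariant tube; in the locally linear multiaxial setting it is routine via equivariant collaring, and this is the only place where I would expect any technical care to be needed.
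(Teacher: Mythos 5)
Your proposal follows essentially the same route as the paper: Theorem \ref{mainth3} is obtained there precisely as the interpretation of the surjectivity of the restriction map in the splitting $S_{U(n)}(M)=S_{U(n)}(M,\rel U(n)M^{U(i)})\oplus S_{U(n-i)}(M^{U(i)})$, which is assembled from Theorems \ref{ssplit} and \ref{ssplit2} exactly as you describe (including the parity matching of the summands coming from strata of index $\ge i$). Your final step promoting the class-level surjectivity to an on-the-nose equality $f^{U(i)}=\phi$ goes beyond what the paper does and is not needed for the theorem as intended; as written it is also the one fragile point, since equivariant tubular neighborhoods need not exist in the topological category (and the structure set here even allows homology manifolds), the standard remedy being the isovariant homotopy extension property rather than a tube.
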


The theorem means that half of the fixed point subsets can be homotopically replaced. The homotopy replacement of the fixed point subset of the whole group has been studied in \cite{cw,cwy}. Here equivariant replacement is achieved for the fixed point subsets of certain proper subgroups (and not others); this is the first appearance of such a phenomenon. 

For $k\le n$, by $S_{U(n)}(M)=S_{U(k)}(M^{U(n-k)})$, we may apply Theorem \ref{mainth3} to the $k$-axial $U(k)$-manifold $M^{U(n-k)}$ and get the following homotopy replacement result: For any even $i\le k$ and $U(k-i)$-isovariant simple homotopy equivalence $\phi\colon V\to M^{U(n-k+i)}$, there is a $U(n)$-isovariant simple homotopy equivalence $f\colon N\to M$, such that $\phi=f^{U(n-k+i)}$ is the restriction of $f$. 

Algebraically, the terms $S^{\text{alg}}(X_{-i},X_{-i-1})$ and $S^{\text{alg}}(X)$ in the decompositions of Theorem \ref{mainth1} can be explicitly computed for the special case that $M$ is the unit sphere of the representation $k\rho_n\oplus j\epsilon$. For $k\ge n$, let $A_{n,k}$ be the number of Schubert cells of dimensions $0$ mod $4$ in the complex Grassmannian $G(n,k)$, and let $B_{n,k}$ be the number of cells of dimensions $2$ mod $4$. Specifically, $A_{n,k}$ is the number of $n$-tuples $(\mu_1,\dotsc,\mu_n)$ satisfying
\[
0\le \mu_1\le \dotsb \le \mu_n\le k-n,\quad
\sum\mu_i\text{ is even},
\]
and $B_{n,k}$ is the similar number for the case $\sum\mu_i$ is odd. Then the following computation is carried out in Section \ref{repsphere}.

\begin{theorem}\label{mainth4}
Suppose $S(k\rho_n\oplus j\epsilon)$ is the unit sphere of the representation $k\rho_n\oplus j\epsilon$, $k\ge n$. 
\begin{enumerate}
\item If $k-n$ is even, then we have
\[
S_{U(n)}(S(k\rho_n\oplus j\epsilon))
={\bb Z}^{\sum_{0\le 2i<n}A_{n-2i,k}}\oplus {\bb Z}_2^{\sum_{0\le 2i<n}B_{n-2i,k}},
\]
with the only exception that there is one less copy of ${\bb Z}$ in case $n$ is odd and $j=0$. 
\item If $k-n$ is odd, then we have
\[
S_{U(n)}(S(k\rho_n\oplus j\epsilon))
={\bb Z}^{A_{n,k-1}+\sum_{0\le 2i-1<n}A_{n-2i+1,k}}\oplus {\bb Z}_2^{B_{n,k-1}+\sum_{0\le 2i-1<n}B_{n-2i+1,k}},
\]
with the exceptions that there is one less copy of ${\bb Z}$ in case $n$ is even and $j=0$, and there is one more copy of ${\bb Z}_2$ in case $n$ is odd and $j>0$.
\end{enumerate}
\end{theorem}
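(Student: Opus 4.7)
The plan is to apply Theorem \ref{mainth1} and compute each algebraic structure set summand in terms of the $\bb{L}$-homology of a complex Grassmannian. The extendability hypothesis required by case 2 of Theorem \ref{mainth1} is automatic here, since the embedding $\bb{C}^n\hookrightarrow\bb{C}^{n+1}$ as the first $n$ coordinates identifies $S(k\rho_n\oplus j\epsilon)$ with $S(k\rho_{n+1}\oplus j\epsilon)^{U(1)}$, where $U(1)$ acts on the last coordinate. This reduces the calculation to computing $S^{\text{alg}}(X_{-i},X_{-i-1})$ for the prescribed parities of $i$, together with the single piece $S^{\text{alg}}(X)$ in case 2.

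The first geometric step is to identify each stratum closure $\bar{X}^{-i}$, for $0\le i<n$, as being homotopy equivalent to $G(n-i,k)$. A $U(n-i)$-orbit of full-rank $(n-i)\times k$ matrices is determined by the positive semi-definite Gram matrix ${A'}^*A'$, whose image is the row span of $A'$; this realizes the orbit space of full-rank matrices as a fiber bundle over $G(n-i,k)$ with fiber the open cone of positive-definite $(n-i)\times(n-i)$ Hermitian matrices. Incorporating the $\bb{C}^j$ factor and the sphere condition, the pure stratum $X^{-i}$ becomes a fiber bundle over $G(n-i,k)$ whose fiber is the intersection of the unit sphere in $\text{Herm}_{n-i}\oplus\bb{C}^j$ with the open convex cone of positive-definite matrices times $\bb{C}^j$. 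This fiber is contractible, so $\bar{X}^{-i}\simeq G(n-i,k)$, with dimension $d_i:=2(n-i)k-(n-i)^2+2j-1$.

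Each $S^{\text{alg}}(X_{-i},X_{-i-1})$ is computed via Poincar\'e-Lefschetz duality for the topological manifold pair $(\bar{X}^{-i},\pa\bar{X}^{-i})$ in $\bb{L}$-homology, which gives
\[
H_{d_i}(\bar{X}^{-i},\pa\bar{X}^{-i};\bb{L})\cong H^0(G(n-i,k);\bb{L}_\bullet).
\]
The Atiyah-Hirzebruch spectral sequence collapses since the cohomology of $G(n-i,k)$ is free abelian and concentrated in even degrees, and counting Schubert cells by their real dimension mod $4$ — using $L_q(e)=\bb{Z}$ for $q\equiv 0$, $L_q(e)=\bb{Z}_2$ for $q\equiv 2$, and $L_q(e)=0$ otherwise — produces $\bb{Z}^{A_{n-i,k}}\oplus\bb{Z}_2^{B_{n-i,k}}$. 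Since $\bar{X}^{-i}$ is simply connected, the fibre sequence of the assembly gives the exact sequence
\[
L_{d_i+1}(e)\to S^{\text{alg}}(X_{-i},X_{-i-1})\to\bb{Z}^{A_{n-i,k}}\oplus\bb{Z}_2^{B_{n-i,k}}\to L_{d_i}(e),
\]
and summing over the appropriate parities of $i$ yields the main term of the theorem in both cases. The extra summand $S^{\text{alg}}(X)$ in case 2 is handled by the same technique applied to $X$; using the join decomposition $X=X_0*S^{2j-1}$ with $X_0=S(k\rho_n)/U(n)$ reduces its top-dimensional $\bb{L}$-homology to a $j=0$ computation whose Schubert cell count matches that of $G(n,k-1)$, the shift from $k$ to $k-1$ reflecting the codimension by which $X$ sits below the top stratum of the extended orbit space of $W$.

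The main remaining obstacle is the careful identification of the exceptional cases. The map $H_{d_i}(\bar{X}^{-i},\pa\bar{X}^{-i};\bb{L})\to L_{d_i}(e)$ is the relative signature (when $d_i\equiv 0\pmod 4$) or Kervaire invariant (when $d_i\equiv 2\pmod 4$), and the connecting map $L_{d_i+1}(e)\to S^{\text{alg}}$ is Wall realization. These vanish on a pair whose boundary is nondegenerate, but become nonzero precisely when the stratum closure reduces to a closed manifold of the right parity, which happens for $j=0$ at the deepest nonempty stratum $\bar{X}^{-(n-1)}=\bb{C}P^{k-1}$ of signature $1$. A parity analysis of $d_i\bmod 4$ (equal to $2j-1$ when $n-i$ is even and to $2(k+j-1)$ when $n-i$ is odd), combined with the parity of $k-n$ in each case, matches the ``one less copy of $\bb{Z}$'' exceptions stated; the ``one more $\bb{Z}_2$'' in case 2 with $n$ odd and $j>0$ arises symmetrically from a nontrivial Wall realization $L_{d+1}(e)=\bb{Z}_2\hookrightarrow S^{\text{alg}}$ at the low-dimensional singular stratum whose parities force injectivity.
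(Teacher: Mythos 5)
Your overall strategy for the summands $S^{\text{alg}}(X_{-i},X_{-i-1})$ coincides with the paper's: duality identifies the relative $\bb L$-homology of $(\bar X^{-i},\pa\bar X^{-i})$ with the $\bb L$-cohomology of the Grassmannian $G(n-i,k)$, the Schubert cells collapse the spectral sequence, and the only correction occurs at the deepest summand when $j=0$, where the action becomes free and the summand is the absolute structure set $S({\bb C}P^{k-1})$, losing one ${\bb Z}$ to the signature in $L_{2(k-1)}(e)$. (Two small slips here: $\epsilon$ is the \emph{real} one-dimensional trivial representation, so $\dim X_{-i}=2k(n-i)+j-1-(n-i)^2$ with $j$ rather than $2j$, and the join is with $S^{j-1}$, not $S^{2j-1}$; this invalidates your stated parity formulas for $d_i\bmod 4$, though not the identification of which summands are exceptional, which really depends only on whether $X_{-n}=M^{U(n)}/U(0)=S^{j-1}$ is empty, i.e.\ on whether $j=0$.)

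The genuine gap is the top summand $S^{\text{alg}}(X)$ in case 2. Here $S^{\text{alg}}(X)=\tilde H_{\dim X}(X;{\bb L})$ is the \emph{reduced homology of the whole stratified orbit space}, not of a single pure stratum, so Poincar\'e duality and the cohomology of a Grassmannian are not available; asserting that "the same technique" gives a Schubert count matching $G(n,k-1)$ is exactly the nontrivial point. The paper needs the appendix (Bass): an explicit CW decomposition of $Z=S(k\rho_n)/U(n)$ by row-echelon shapes $(m_1,\dotsc,m_r)$, a computation of the cellular boundary $\pa B(m_1,\dotsc,m_{r-1},1)=B(m_1,\dotsc,m_{r-1})$, and the identification of the surviving generators as the full-rank shapes with $m_n>1$, which is where the shift $k\mapsto k-1$ actually comes from. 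Your explanation of the "one more ${\bb Z}_2$" exception is also wrong in mechanism: for a nonempty simply connected $X$ the assembly map ${\bb H}(X;{\bb L})\to{\bb L}(e)$ is split by the inclusion of a point, so the Wall realization map $L_{\dim X+1}(e)\to S^{\text{alg}}(X)$ is zero (and $L_{\dim X+1}(e)$ would in any case depend on $j\bmod 4$, whereas the exception holds for all $j>0$). The extra ${\bb Z}_2$ is the basepoint contribution $H_0(Z;\pi_{\dim Z}{\bb L})=L_{\dim Z}(e)$ distinguishing the unreduced from the reduced homology of $Z$ when $j>0$, which is ${\bb Z}_2$ precisely when $n$ is odd.
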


The computation generalizes the classical computation for the fake complex projective space \cite[Section 14C]{wa2}. 

If $N$ is isovariant simple homotopy equivalent to the representation sphere $S(k\rho_n\oplus j\epsilon)$, then joining with the representation sphere $S(\rho_n)$ yields a manifold $N*S(\rho_n)$ isovariant simple homotopy equivalent to the representation sphere $S((k+1)\rho_n\oplus j\epsilon)$. This gives the suspension map
\[
*S(\rho_n)\colon
S_{U(n)}(S(k\rho_n\oplus j\epsilon))
\to S_{U(n)}(S((k+1)\rho_n\oplus j\epsilon)).
\]
A consequence of the calculation in Theorem \ref{mainth4} is the following, proved in Section \ref{suspend}.

\begin{theorem}\label{mainth5}
The suspension map is injective.
\end{theorem}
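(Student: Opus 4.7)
The plan is to combine the explicit computation of Theorem \ref{mainth4} with the stratified splittings of Theorem \ref{mainth1}, exploiting the compatibility of the join construction with isotropy stratification.

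First I would note the key naturality: for any $U(n)$-space $N$,
\[
(N*S(\rho_n))^{U(i)} = N^{U(i)} * S(\rho_n)^{U(i)} = N^{U(i)} * S(\rho_{n-i}),
\]
so the join $*S(\rho_n)$ acts on each $U(i)$-fixed stratum as a lower-dimensional join with $S(\rho_{n-i})$. Consequently the suspension map respects the entire stratified diagram of structure sets, and in particular commutes with the restriction maps to fixed sets that appear in Theorem \ref{mainth1}.

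Second, I would reduce to the case $k\ge n$ using case 3 of Theorem \ref{mainth1}: for $k<n$, both source and target structure sets are identified with structure sets of smaller group rank acting on top strata, and the naturality above shows that the suspension descends compatibly to these identifications (possibly after one further reduction when the minimal isotropy decreases).

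Third, for $k\ge n$, I would unpack the decompositions of Theorem \ref{mainth1}. Since $*S(\rho_n)$ flips the parity of $k-n$, the source splits over strata of one parity, while the target splits over strata of the opposite parity together with a global $S^{\text{alg}}(X')$-summand. Using Theorem \ref{mainth4}, each summand on either side is a direct sum of $\mathbb{Z}$'s and $\mathbb{Z}_2$'s indexed by Schubert cells of a complex Grassmannian, and Theorem \ref{mainth3} (homotopy replacement of fixed sets) together with the join naturality pins down how each source summand is distributed into target summands of adjacent parity. Injectivity of the total map then reduces to a combinatorial comparison of the Schubert cell counts $A_{n-2i,k},B_{n-2i,k}$ with the target counts $A_{n-2i\pm 1,k+1},B_{n-2i\pm 1,k+1}$, which follows from the Pascal-type recursion expressing $\binom{k+1}{n}=\binom{k}{n}+\binom{k}{n-1}$ in its parity-graded refinement.

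The main obstacle is exactly this third step: because the source and target splittings sit over strata of opposite parity, no single source summand maps into a single target summand, so the proof hinges on explicitly identifying the ``spreading'' of each source summand into multiple target summands (plus the global algebraic piece) and showing that these images remain independent. I expect this independence to come from compatibility of the join with simultaneous restrictions to the $U(2i-1)$-, $U(2i)$- and $U(2i+1)$-fixed sets, combined with the rank matching supplied by Theorem \ref{mainth4}. Given this identification, the induced homomorphism of abelian groups is manifestly injective and Theorem \ref{mainth5} follows.
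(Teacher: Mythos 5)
You have correctly located the central difficulty---the single suspension flips the parity of $k-n$, so the source and target of $*S(\rho_n)$ split over strata of opposite parities and no source summand lands in a single target summand---but your proposed resolution does not actually close the gap. Reducing injectivity to ``a combinatorial comparison of the Schubert cell counts'' via the parity-graded Pascal recursion $\binom{k+1}{n}=\binom{k}{n}+\binom{k}{n-1}$ only shows that the ranks of the two sides are consistent with injectivity; it cannot establish that the map itself is injective, since a homomorphism between abelian groups of compatible ranks (and with $\mathbb{Z}_2$ torsion in both) can still have kernel. The appeal to Theorem \ref{mainth3} does not supply the missing mechanism either: that theorem concerns surjectivity of restriction to fixed sets, and it gives no control over how a source summand distributes among the target summands, which is exactly what you would need to make the ``spreading'' argument work.

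The paper sidesteps this entirely by passing to the \emph{double} suspension $*S(2\rho_n)$, which preserves the parity of $k-n$ and therefore does decompose diagonally as a direct sum of maps $\sigma_i$ between corresponding summands of the two decompositions. Each $\sigma_i$ is then identified, via the geometric interpretation of suspension as pullback along the projection that forgets the last two vectors, with a homological Thom isomorphism for a trivial disk bundle followed by an inclusion-induced map, and is therefore injective; for the top summand $S^{\mathrm{alg}}(X)$ in the odd case this requires the nontrivial input from Bass's appendix that $\tilde{H}_{\dim X}(X;\mathbb{L})\to H_{\dim X}(X,X-X'^0;\mathbb{L})$ is injective, an ingredient absent from your sketch. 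Injectivity of the single suspension then follows formally, since it is the first factor of the injective composite $*S(2\rho_n)=*S(\rho_n)\circ *S(\rho_n)$. If you want to salvage your approach, you would need to replace the rank count with an actual identification of the matrix of the single suspension with respect to the two decompositions, which is considerably harder than the double-suspension detour.
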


Finally, in Section \ref{quat}, we extend all the results to the similarly defined multiaxial $Sp(n)$-manifolds. 

For $k-n$ odd, the proofs of both Theorems \ref{mainth4} and \ref{mainth5} depend on very clever detailed calculations of the homology of certain orbit spaces (unlike the other calculations that depend on classical calculations of the cohomology of Grassmanians).  We would like to thank Jared Bass who wrote the appendix to this paper, presenting these calculations, taken from his forthcoming University of Chicago Ph.D. thesis.  We would also like to thank the Hebrew University of Jerusalem, and the University of Chicago for their hospitality during the work on this project.

\section{Strata of Multixial $U(n)$-Manifolds}

Smooth multiaxial manifolds were introduced and studied in \cite{davis, dh, dhm}, following earlier works on biaxial actions \cite{br1,br2,br3,hh1,ja}. As noted in the introduction, our definition of multiaxial actions in the topological category is more flexible and the actions are not assumed to be locally linear (just local flatness of strata), and the local model may vary at different parts of the manifold.

Let $U(n)$ be the unitary group of linear transformations of ${\bb C}^n$ preserving the Euclidean norm. By a {\em unitary subgroup}, we mean the subgroup of unitary transformations fixing a linear subspace of ${\bb C}^n$. If the fixed subspace has complex dimension $n-i$, then the unitary subgroup is conjugate to the specific unitary subgroup $U(i)$ of $U(n)$ that fixes the last $n-i$ coordinates. 

The {\em normalizer} of the specific unitary subgroup is $NU(i)=U(i)\times U(n-i)$, where by an abuse of notation, $U(n-i)$ is the unitary subgroup that fixes the first $i$ coordinates. Then the quotient group $NU(i)/U(i)$ may be naturally identified with $U(n-i)$. It is usually clear from the context when $U(k)$ is the specific unitary subgroup (fixing the last $n-k$ coordinates) and when it is the quotient group (fixing the first $n-k$ coordinates).

\begin{definition*}
A topological $U(n)$-manifold $M$ is {\em multiaxial}, if any isotropy group is a unitary subgroup, and for any $i>j$, $M^{-i}=M_{-i}-M_{-i-1}$ is a locally flat submanifold in $M_{-j}$.
\end{definition*}

In the definition, the multiaxial manifold $M$ is stratified by $M_{-i}=U(n)M^{U(i)}$, the set of points fixed by some conjugate of $U(i)$. Correspondingly, the orbit space $X=M/U(n)$ is stratified by $X_{-i}=M_{-i}/U(n)$. 

The locally flat assumption can be relaxed. What we really need are some homotopy consequences of this assumption. Specifically, we need the (homotopy) links between adjacent strata to be homotopy spheres, and the pure strata of the (homotopy) links of $M^{-i}$ in $M$ to be connected and simply connected (with the exception that the link of $M^{-1}$ in $M$ can be the circle). Quinn \cite{quinn} showed that such homotopy properties imply that the orbit space is homotopically stratified. Then the pure stratum $M^{-i}=M_{-i}-M_{-i-1}$ is an open manifold that can be completed into a manifold with boundary $U(n)\times_{U(n-i)}(\bar{M}^{U(i)},\pa\bar{M}^{U(i)})$, by deleting (the interior of) regular neighborhoods of lower strata. The pure stratum $X^{-i}=X_{-i}-X_{-i-1}$ is a homology manifold \cite{bfmw}, and can also be completed into a homological manifold with boundary $(\bar{X}^{-i},\pa\bar{X}^{-i})$.

For a multiaxial $U(n)$-manifold $M$, the fixed set $M^{U(i)}$ is a multiaxial $U(n-i)$-manifold, where $U(n-i)=NU(i)/U(i)$ is the quotient group. The following is a kind of ``hereditary property'' for multiaxial manifolds.

\begin{lemma}\label{hereditary}
If $M$ is a multiaxial $U(n)$-manifold, then $M_{-i}/U(n)=M^{U(i)}/U(n-i)$. 
\end{lemma}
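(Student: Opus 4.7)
The plan is to exhibit a natural bijection induced by the inclusion $M^{U(i)}\hookrightarrow M_{-i}$. Since $NU(i)=U(i)\times U(n-i)$ normalizes $U(i)$, it preserves the fixed set $M^{U(i)}$; as $U(i)$ acts trivially on $M^{U(i)}$, the $NU(i)$-action descends to $U(n-i)=NU(i)/U(i)$. Composing the inclusion with the quotient $M_{-i}\to M_{-i}/U(n)$ then factors through $M^{U(i)}/U(n-i)$, yielding the candidate map. Surjectivity is immediate from the definition $M_{-i}=U(n)\cdot M^{U(i)}$: every $U(n)$-orbit in $M_{-i}$ already meets $M^{U(i)}$.

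The only substantive step is injectivity. Suppose $p,q\in M^{U(i)}$ and $q=gp$ for some $g\in U(n)$; I need to find $h\in NU(i)$ with $hp=q$, equivalently an element of the coset $gG_p$ that preserves the orthogonal decomposition ${\bb C}^n={\bb C}^i\oplus{\bb C}^{n-i}$. Here I would invoke the multiaxial hypothesis: the isotropy $G_p$ is the full unitary group fixing pointwise some complex subspace $V_p\subseteq{\bb C}^n$. The containment $U(i)\subseteq G_p$ is equivalent to $V_p\subseteq{\bb C}^{n-i}$, and similarly $V_q=g(V_p)\subseteq{\bb C}^{n-i}$. Consequently both $V_p^{\perp}$ and $V_q^{\perp}$ split orthogonally as ${\bb C}^i\oplus W_\bullet$, where $W_p,W_q$ are the complements of $V_p,V_q$ inside ${\bb C}^{n-i}$ and $\dim W_p=\dim W_q$.

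The main obstacle—really a piece of routine linear algebra once the setup is in place—is the correction of $g$ inside $G_p$. I would choose any unitary $\phi\colon V_p^{\perp}\to V_q^{\perp}$ carrying the ${\bb C}^i$ summand to ${\bb C}^i$ (possible because the dimensions match), define $\alpha\in G_p$ to be the identity on $V_p$ and $g^{-1}\phi$ on $V_p^{\perp}$, and set $h=g\alpha$. Then $hp=g(\alpha p)=gp=q$, while $h|_{V_p}=g|_{V_p}$ lands in $V_q\subseteq{\bb C}^{n-i}$ and $h|_{V_p^{\perp}}=\phi$ preserves the ${\bb C}^i$ summand. Hence $h$ preserves the decomposition ${\bb C}^n={\bb C}^i\oplus{\bb C}^{n-i}$, i.e.\ $h\in NU(i)$, finishing the argument. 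The essential ingredient is that the unitary-subgroup form of the isotropy provides exactly the flexibility needed to normalize $g$ into $NU(i)$; without the multiaxial hypothesis on isotropies this step would fail.
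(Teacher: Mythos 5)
Your argument is correct and is essentially the paper's proof: the paper packages the same injectivity step as the transitivity of the $NU(i)$-action on $(U(n)/G_p)^{U(i)}$ (Propositions \ref{transitive} and \ref{orbit}), constructing a $\nu\in NU(i)$ that preserves ${\bb C}^{n-i}$ and agrees with $g$ on $V_p$, which is exactly your $h=g\alpha$. The linear-algebra correction inside the unitary isotropy group is identical in both treatments.
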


The lemma shows that, as far as the orbit space is concerned, the study of a stratum of a multiaxial manifold is the same as the study of a ``smaller'' multiaxial manifold. In particular, if a multiaxial $U(n)$-manifold $M$ does not have free points, then the minimal isotropy groups are conjugate to $U(m)$ for some $m>0$, and the study of the $U(n)$-manifold $M$ is the same as the study of the multiaxial $U(n-m)$-manifold $M^{U(m)}$. Since the $U(n-m)$-action on $M^{U(m)}$ has free points, we may thus always assume the existence of free points without loss of generality. In the setting of multiaxial manifolds modeled on $k\rho_n\oplus j\epsilon$ studied in \cite{davis, dh, dhm}, this means that we may always assume $k\ge n$. We remark that $k\le n$ was always assumed in these earlier works.

Lemma \ref{hereditary} is a consequence of the following two propositions.

\begin{proposition}\label{transitive}
If $H\sub K\sub G=U(n)$ are unitary subgroups, then the $NH$-action on $(G/K)^H$ is transitive. In other words, if $H\sub K$ and $g^{-1}Hg\sub K$, then $g=\nu k$ for some $\nu\in NH$ and $k\in K$. 
\end{proposition}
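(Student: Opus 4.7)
The plan is to translate the statement into linear algebra and produce $\nu$ and $k$ explicitly. Write $V_H, V_K \subset {\bb C}^n$ for the subspaces fixed pointwise by $H$ and $K$ respectively. Since a unitary subgroup is determined by its fixed subspace and a larger fixed subspace corresponds to a smaller group, the hypothesis $H \subset K$ translates to $V_K \subset V_H$. Likewise, $g^{-1}Hg$ is the unitary subgroup fixing $g^{-1}V_H$, so $g^{-1}Hg \subset K$ translates to $V_K \subset g^{-1}V_H$, i.e.\ $gV_K \subset V_H$.

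Next I would pin down the relevant groups in block form with respect to the orthogonal decomposition ${\bb C}^n = V_H \oplus V_H^{\perp}$. The normalizer is $NH = U(V_H) \times U(V_H^{\perp})$, and an element of $K$ is exactly a unitary transformation that restricts to the identity on $V_K$. So to write $g = \nu k$ with $\nu \in NH$ and $k \in K$, it suffices to produce $\nu \in U(V_H) \times U(V_H^{\perp})$ with $\nu|_{V_K} = g|_{V_K}$; then $k := \nu^{-1}g$ automatically satisfies $k|_{V_K} = \mathrm{id}$ and thus lies in $K$.

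Such a $\nu$ exists by the usual ``extend a partial isometry'' argument. Because $g$ is unitary and $gV_K \subset V_H$, the restriction $g|_{V_K} \colon V_K \to gV_K$ is a unitary isomorphism between two subspaces of the same dimension inside $V_H$. Choose any unitary isomorphism $V_H \ominus V_K \to V_H \ominus gV_K$ and take its orthogonal sum with $g|_{V_K}$ to get a unitary $\tilde\nu \in U(V_H)$ extending $g|_{V_K}$. Define $\nu := \tilde\nu \oplus \mathrm{id}_{V_H^{\perp}}$, which lies in $NH$. Then $\nu^{-1}g$ fixes $V_K$ pointwise, so $k := \nu^{-1}g \in K$ and $g = \nu k$, as required.

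There is no real obstacle; the only point requiring care is the correct dictionary between subgroup containment and fixed-space containment (which reverses direction) and verifying that both $V_K$ and $gV_K$ sit inside the common ambient subspace $V_H$, so that the partial isometry $g|_{V_K}$ actually has room to be extended inside $U(V_H)$. Once that is in place, the construction of $\nu$ is just the standard extension of a unitary map between subspaces of a Hilbert space.
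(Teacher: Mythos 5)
Your proof is correct and follows essentially the same route as the paper's: translate both inclusions into the (direction-reversing) statements $V_K\sub V_H$ and $gV_K\sub V_H$ about fixed subspaces, extend the partial isometry $g|_{V_K}$ to a unitary $\nu$ preserving $V_H$ (hence lying in $NH$), and observe that $k=\nu^{-1}g$ fixes $V_K$ pointwise and so lies in $K$. You merely spell out the extension step and the block description of $NH$ more explicitly than the paper does.
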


\begin{proof}
The subgroups $K$ and $H$ consist of the unitary transformations of ${\bb C}^n$ that respectively fix some subspaces $V_K$ and $V_H$. Then $H\sub K$ means $V_K\sub V_H$ and $g^{-1}Hg\sub K$ means $gV_K\sub V_H$. Therefore there is a unitary transformation $\nu$ that preserves $V_H$ and restricts to $g$ on $V_K$. Then $\nu^{-1}g$ preserves $V_K$, so that $\nu^{-1}g\in K$. Moreover, the fact that $\nu$ preserves $V_H$ means that $\nu\in NH$.

The transitivity of the $NH$-action on $(G/K)^H$ means that if $gK\in (G/K)^H$, then $gK=\nu K$ for some $\nu\in NH$. Since $gK\in (G/K)^H$ means $g^{-1}Hg\sub K$, and $gK=\nu K$ means $g= \nu k$ for some $k\in K$, we see that the transitivity is the same as the group theoretical property above.
\end{proof}

\begin{proposition}\label{orbit}
If $G$ acts on a set $M$, such that every pair of isotropy groups satisfy the property in Proposition \ref{transitive}, then $GM^H/G=M^H/NH$ for any isotropy group $H$.
\end{proposition}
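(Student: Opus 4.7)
The plan is to exhibit a natural bijection between $M^H/NH$ and $GM^H/G$ that comes from the inclusion $M^H \hookrightarrow GM^H$. First I would define the map $\phi \colon M^H \to GM^H/G$ by $x \mapsto Gx$. It is surjective essentially by definition: every point of $GM^H$ has the form $gx$ with $x \in M^H$, so its $G$-orbit equals $Gx = \phi(x)$.

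Next I would check that $\phi$ descends to a well-defined map $\bar\phi \colon M^H/NH \to GM^H/G$. For this I need that the $NH$-action preserves $M^H$ and produces points in the same $G$-orbit. If $x \in M^H$ and $\nu \in NH$, then for any $h \in H$ we have $h(\nu x) = \nu(\nu^{-1}h\nu)x = \nu x$ because $\nu^{-1}h\nu \in H$ fixes $x$, so $\nu x \in M^H$; and trivially $\nu x \in Gx$. Hence $\bar\phi$ is well-defined and still surjective.

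The main step, which is where the hypothesis enters, is the injectivity of $\bar\phi$. Suppose $x, y \in M^H$ with $\phi(x) = \phi(y)$, so $y = gx$ for some $g \in G$. Let $K$ be the isotropy group of $x$; then $H \subset K$ because $x \in M^H$, and $H$ also fixes $y = gx$, which means $g^{-1}Hg \subset K$. The hypothesis (the property from Proposition \ref{transitive} applied to the pair $H \subset K$) then produces $\nu \in NH$ and $k \in K$ with $g = \nu k$. Since $k$ fixes $x$, we get $y = gx = \nu k x = \nu x$, so $x$ and $y$ lie in the same $NH$-orbit. This shows $\bar\phi$ is injective, hence a bijection, which is exactly the identification $GM^H/G = M^H/NH$.

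The only subtle point is keeping track of where the hypothesis is used: it is needed solely to write an arbitrary $G$-element carrying one $H$-fixed point to another as a product of a normalizer element and an isotropy element. Everything else is formal bookkeeping about orbits and fixed sets. I do not expect any obstacle beyond correctly setting up this $g = \nu k$ decomposition.
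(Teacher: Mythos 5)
Your argument is correct and is essentially the paper's own proof: the paper also identifies the fibre of the natural surjection $M^H/NH\to GM^H/G$ over $Gx$ with $(Gx)^H/NH=(G/G_x)^H/NH$ and invokes the transitivity property of Proposition \ref{transitive} with $K=G_x$, which is exactly your decomposition $g=\nu k$. Your write-up just makes the surjectivity and well-definedness checks explicit.
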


\begin{proof}
We always have the natural surjective map $M^H/NH\to GM^H/G$. Over a point in $GM^H/G$ represented by $x\in M^H$, the fibre of the map is $(Gx)^H/NH$.  Therefore the natural map is injective if and only if the action of $NH$ on $(Gx)^H=(G/G_x)^H$ is transitive.
\end{proof}

\section{Homotopy Properties of Multixial $U(n)$-Manifolds}

Although our definition of multiaxial $U(n)$-manifold is more general than those in \cite{davis, dh, dhm} that are modeled on linear representations, many homotopy properties of the linear model are still preserved.

First we consider the (homotopy) link between adjacent strata of the orbit space $X=M/U(n)$ of a multiaxial $U(n)$-manifold $M$. By the link of $X_{-j}$ in $X_{-j+1}$, we really mean the link of the pure stratum $X^{-j}=X_{-j}-X_{-j-1}$ in $X_{-j+1}$ (same for the strata of $M$), and this link may be different along different connected component of $X^{-j}$. So for any $x\in X_{-j}$, we denote by $X_{-j}^x$ the connected component of $X_{-j}$ containing $x$. By the link of $X_{-j}^x$ in $X_{-j+1}$, we really mean the link of $X_{-j}^x-X_{-j-1}$ in $X_{-j+1}$. We also denote by $M^{U(j),x}$ the corresponding connected component of $M^{U(j)}$, so that $X_{-j}^x=M^{U(j),x}/U(n-j)$. 

\begin{lemma}\label{link}
Suppose $X$ is the orbit space of a multiaxial $U(n)$-manifold. For any $x\in X_{-i}$ and $1\le j\le i$, the link of $X_{-j}^x$ in $X_{-j+1}$ is homotopy equivalent to ${\bb C}P^{r_j^x}$, and $r_j^x=r_{j-1}^x+1$. 
\end{lemma}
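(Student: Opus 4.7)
I would prove the lemma by induction on $n$, handling the base case $j=1$ directly and using Lemma \ref{hereditary} to reduce the inductive step at level $j\ge 2$ in a multiaxial $U(n)$-manifold to the level $j-1$ case in the multiaxial $U(n-1)$-manifold $M^{U(1)}$.

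For the inductive step, Lemma \ref{hereditary} applied to the $U(n-1)$-action on $M^{U(1)}$ together with the group-theoretic description $(M^{U(1)})^{U(s)}=M^{U(s+1)}$ (following Proposition \ref{transitive}) yields a stratum-preserving identification $X_{-r}=X'_{-(r-1)}$ for all $r\ge 1$, where $X':=M^{U(1)}/U(n-1)$. Under this identification the component $X^x_{-j}$ corresponds to some $X'^{x'}_{-(j-1)}$, and $X_{-j+1}$ corresponds to $X'_{-(j-2)}$. Hence the link of $X^x_{-j}$ in $X_{-j+1}$ equals the link of $X'^{x'}_{-(j-1)}$ in $X'_{-(j-2)}$, which by the inductive hypothesis is ${\bb C}P^{r'_{j-1}}$ with $r'_{j-1}=r'_{j-2}+1$. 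Setting $r_j^x:=r'_{j-1}$ then gives $r_j^x=r_{j-1}^x+1$ for $j\ge 3$ via the identification $r_{j-1}^x=r'_{j-2}$.

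For the base case $j=1$, pass to a generic $x'\in X^x_{-1}\cap X^{-1}$ with preimage $m'\in M^{U(1)}$ of isotropy exactly $U(1)$. The subgroup $U(1)\subset U(n)$ fixes $M^{U(1)}$ near $m'$ and acts freely on the complement, while the complementary $U(n-1)\subset U(n)$ acts freely at $m'$ since $U(1)\cap U(n-1)=\{e\}$. By the assumed homotopy consequences of multiaxiality, the (homotopy) link $\Sigma$ of $M^{U(1)}$ in $M$ at $m'$ is a homotopy sphere, necessarily odd-dimensional, say $\dim\Sigma=2r+1$, since it admits a free $U(1)$-action. In passing to the orbit space, the free $U(n-1)$-direction is consumed by the orbit direction, so the link of $X^x_{-1}$ in $X$ at $x'$ becomes $\Sigma/U(1)$. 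The Gysin sequence of the principal $U(1)$-bundle $\Sigma\to\Sigma/U(1)$ shows that $\Sigma/U(1)$ has the cohomology ring of ${\bb C}P^r$, and the homotopy long exact sequence makes it simply connected, so $\Sigma/U(1)\simeq {\bb C}P^r$; set $r_1^x:=r$. The recursion case $j=2$, not covered by the inductive reduction since $r'_0$ is not defined, requires a direct slice comparison: using $\rho_j|_{U(j-1)}\cong\rho_{j-1}\oplus\epsilon$, the normal direction of $M^{U(2)}$ in $M^{U(1)}$ has exactly one more complex dimension than that of $M^{U(1)}$ in $M$ at a generic point, yielding $r_2^x-r_1^x=1$.

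The main delicate point is the base-case identification $\Sigma/U(1)\simeq {\bb C}P^r$: securing the principal $U(1)$-bundle structure for the free $U(1)$-action on the homotopy sphere $\Sigma$ requires combining local flatness of $M^{U(1)}$ in $M$ with classical results on topological $S^1$-actions near a locally flat fixed set. Once this is in place, the inductive reduction and the slice comparison at $j=2$ both follow cleanly.
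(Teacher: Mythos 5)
Your first part (the link of $X_{-j}^x$ in $X_{-j+1}$ is a homotopy ${\bb C}P^{r}$) is essentially the paper's argument: by local flatness the link of $M_{-j}$ in $M_{-j+1}$ is a sphere, the quotient group $N_{U(j)}U(j-1)/U(j-1)=S^1$ acts freely on it, and the quotient of a homotopy sphere by a free $S^1$-action is homotopy equivalent to a complex projective space. Your inductive reduction via Lemma \ref{hereditary}, identifying $X_{-r}$ with $X'_{-(r-1)}$ for $X'=M^{U(1)}/U(n-1)$, is also sound and correctly isolates the one relation that induction cannot supply, namely $r_2^x=r_1^x+1$.

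The gap is precisely there. Your proof of $r_2^x-r_1^x=1$ invokes $\rho_j|_{U(j-1)}\cong\rho_{j-1}\oplus\epsilon$, i.e.\ a linear slice model for the normal data of $M^{U(2)}$ in $M^{U(1)}$ versus $M^{U(1)}$ in $M$. But the lemma is stated for multiaxial $U(n)$-manifolds in the paper's general sense: the action is \emph{not} assumed locally linear (only local flatness of strata), and no slice representation exists a priori, so there is no ``normal direction with one more complex dimension'' to compare. The relation $r_j^x=r_{j-1}^x+1$ is equivalent to the codimension jumps $\dim M^{U(j-1),x}-\dim M^{U(j),x}$ being independent of $j$, and this is a genuinely global fact in the topological category. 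The paper proves it by viewing $M$ as a $T^j$-manifold for the maximal torus $T^j\sub U(j)$ (using $M^{U(j)}=M^{T^j}$ and the conjugacy of the corank-one subtori) and applying Borel's formula
\[
m_0-m_j=j\,(m_{j-1}-m_j),\qquad m_j=\dim M^{U(j),x},
\]
after which elementary arithmetic with $x_{j-1}-x_j=2r_j+1$ yields $r_j=r_{j-1}+1$. Your approach would be fine in the locally smooth setting of the introduction, but to prove the lemma as stated you need to replace the slice comparison at $j=2$ (and in fact at every $j$, since one could dispense with the induction on $n$ entirely) by such a Smith-theoretic/Borel-type dimension count. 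Your worry about the principal $S^1$-bundle structure in the base case is, by contrast, not an issue: the homotopy equivalence $\Sigma/S^1\simeq{\bb C}P^r$ for a free $S^1$-action on a homotopy sphere is standard and is all the paper uses.
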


The lemma paints the following picture of the strata of the links in a (connected) multiaxial $U(n)$-manifold.  For any $x\in X^{-i}$, the stratification near $x$ is given by
\[
X=X_0^x\supset X_{-1}^x\supset \dotsb\supset X_{-i}^x.
\]
The {\em first gap} $r_1^x$ of $x$ depends only on the connected component $X_{-1}^x$ and determines the homotopy type ${\bb C}P^{r_1^x+j-1}$ of the link of $X_{-j}^x$ in $X_{-j+1}^x$. Moreover, we have
\begin{align*}
&\dim M^{U(j-1),x}-\dim M^{U(j),x} \\
&=\dim X_{-j+1}^x+\dim U(n-j+1)-\dim X_{-j}^x-\dim U(n-j) \\
&=\dim {\bb C}P^{r_1^x+j-1}+1+(n-j+1)^2-(n-j)^2 \\
&=2(r_1^x+n).
\end{align*}
The picture also shows that, near a point of $M$ with isotropy group $gU(i)g^{-1}$, $gU(j)g^{-1}$ is the isotropy group of some nearby point for any $1\le j\le i$.

If the multiaxial manifold is modeled on $k\rho_n\oplus j\epsilon$, then the first gap is independent of the connected component, and $r_1=k-n$ in case $k\ge n$. On the other hand, multiaxial $U(1)$-manifolds are just semi-free $S^1$-manifolds, for which any fixed point component has even codimension $2c$, and the first (and the only) gap of the component is $c-1$.

\begin{proof}
The link of $X_{-j}^x$ in $X_{-j+1}$ is the quotient of the link of $M_{-j}$ in $M_{-j+1}$ by the free action of the quotient group $N_{U(j)}U(j-1)/U(j-1)=S^1$. Since $M^{-j}$ is a locally flat submanifold of $M_{-j+1}$, the link is a sphere. The quotient of the sphere by a free $S^1$-action must be homotopy equivalent to a complex projective space ${\bb C}P^{r_j}$.

Let $m_j=\dim M^{U(j),x}$ and $x_j=\dim X_{-j}^x$. By $X_{-j}^x=M^{U(j),x}/U(n-j)$, we have
\[
x_j=\dim M^{U(j),x}-\dim U(n-j)=m_j-(n-j)^2.
\]
Since the link of $X_{-j}^x$ in $X_{-j+1}^x$ is homotopy equivalent to ${\bb C}P^{r_j}$, we also have 
\[
x_{j-1}-x_j=2r_j+1.
\]

Since all the isotropy groups are unitary subgroups, we know $M^{U(j)}=M^{T^j}$ for the maximal torus $T^j$ of $U(j)$. Here $T^j$ is the specific torus group acting by scalar multiplications on the first $j$ coordinates of ${\bb C}^n$. Now we fix $j$ and consider $M$ as a $T^j$-manifold. By the multiaxial assumption, the isotropy groups of the $T^j$-manifold $M$ are the tori that are in one-to-one correspondence with the choices of some coordinates from the first $j$ coordinates of ${\bb C}^n$. The number $j'$ of chosen coordinates is the rank of the isotropy torus. Since all the tori of the same rank $j'$ are conjugate to the specific torus group $T^{j'}$, their fixed point components containing $\tilde{x} \in M^{U(j)}$ (whose image in $X_{-j}$ is $x$) have the same dimension, which is $\dim M^{U(j'),x}=m_{j'}$. 

For the case $j'=j-1$ (corank $1$ in $T^j$), there are $j$ such isotropy tori. By a formula of Borel \cite[Theorem XIII.4.3]{borel}, we have
\[
m_0-m_j=j(m_{j-1}-m_j).
\]
Written in terms of $x_j$, we have
\[
x_0+n^2
=j(x_{j-1}+(n-j+1)^2)-(j-1)(x_j+(n-j)^2),
\]
or
\[
(j-1)^{-1}(x_{j-1}-x_0)-j^{-1}(x_j-x_0)=1.
\]
This gives $x_j-x_0=j(a-j)$ and
\[
x_{j-1}-x_j=2j-1-a.
\]
Combined with $x_{j-1}-x_j=2r_j+1$, we get $r_j=r_{j-1}+1$.
\end{proof}

Next consider the links between any two (not necessarily adjacent) strata of a multiaxial manifold. For multiaxial manifolds locally modeled on $k\rho_n\oplus j\epsilon$, the pure strata of the links are actually homotopy equivalent to Grassmannians and in particular, are simply connected. In the present paper, we only need the following important consequence of the simple connectivity of pure strata of links.

\begin{lemma}\label{fund}
Suppose $X$ is a homotopically stratified space. If all pure strata of the links in $X$ are connected and simply connected, then all strata of the links in $X$ are also connected and simply connected. Moreover, we have
\[
\pi_1(X_{-i}-X_{-j})=\pi_1(X_{-i}),\quad j>i,
\]
and $\pi_1X^{-i}=\pi_1X_{-i}$ in particular.
\end{lemma}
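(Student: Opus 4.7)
I would prove the lemma by induction on the depth $d$ of the stratification of $X$, establishing both conclusions simultaneously. The base case $d=0$ is vacuous. For the inductive step, note that every link $L$ arising from a stratum of $X$ is itself a homotopically stratified space of depth strictly less than $d$; iterated links in a stratified space are again links of the ambient, so the internal links of $L$ are links in $X$ and $L$ satisfies the same hypothesis as $X$. The induction hypothesis then applies to $L$, yielding both conclusions for $L$.

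For the first conclusion of the lemma, fix a link $L$ in $X$. By hypothesis each pure stratum $L^{-a}$ is connected and simply connected, while the second conclusion applied to $L$ (via the induction hypothesis) gives $\pi_1(L_{-a})=\pi_1(L^{-a})$, with the inclusion $L^{-a}\hookrightarrow L_{-a}$ also a $\pi_0$-bijection. Hence every closed stratum $L_{-a}$ of $L$, and the whole link $L=L_0$ in particular, is connected and simply connected. I will use this in the form: for any pure stratum $X^{-k}$ and any $i<k$, the link of $X^{-k}$ in $X_{-i}$ is a closed substratum of the link of $X^{-k}$ in $X$, hence connected and simply connected.

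For the second conclusion I would peel off pure strata one at a time via van Kampen. Set $A_j=X_{-i}-X_{-j}$ for $j>i$, so $A_{i+1}=X^{-i}$, $A_{j+1}=A_j\cup X^{-j}$, and $A_j=X_{-i}$ once $j$ exceeds the depth of $X$. Inside $A_{j+1}$ the stratum $X^{-j}$ is closed, since all deeper strata have been removed. Quinn's local structure on a homotopically stratified space provides an open neighborhood $N\sub A_{j+1}$ of $X^{-j}$ deformation retracting onto $X^{-j}$, such that $N-X^{-j}\sub A_j$ is the homotopy link of $X^{-j}$ in $X_{-i}$ and homotopy-fibres over $X^{-j}$ with fibre the point-link $F$. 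By the remark above, $F$ is connected and simply connected, so the long exact sequence of the fibration makes the inclusion $N-X^{-j}\hookrightarrow N$ a $\pi_1$-isomorphism on each component. Van Kampen applied to $A_{j+1}=A_j\cup N$ with intersection $N-X^{-j}$ then gives
\[
\pi_1(A_{j+1})=\pi_1(A_j)\ast_{\pi_1(X^{-j})}\pi_1(X^{-j}),
\]
where the amalgamation is over an isomorphism on the right and therefore collapses to $\pi_1(A_j)$. Iterating, $\pi_1(X_{-i})=\pi_1(X_{-i}-X_{-j})$ for every $j>i$, with the case $j=i+1$ reading $\pi_1 X^{-i}=\pi_1 X_{-i}$.

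The main technical obstacle is justifying the homotopy fibration $F\to N-X^{-j}\to X^{-j}$ within Quinn's framework, and identifying the link of $X^{-k}$ in $X_{-i}$ canonically as a closed substratum of the link of $X^{-k}$ in $X$, with enough rigor to freely invoke the long exact sequence and van Kampen. Some care is also needed about $\pi_0$, since the pure strata $X^{-i}$ of $X$ itself are not assumed connected and all statements should be read componentwise.
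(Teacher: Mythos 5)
Your proof is correct, and it travels the same circle of ideas as the paper --- induction on depth, the observation that links are themselves homotopically stratified spaces whose links are links of $X$, and van Kampen combined with the exact sequence of a fibration with connected, simply connected fibre --- but it is organized differently. The paper factors the argument through Propositions \ref{fibration}, \ref{fund0} and \ref{fund1}: it removes the entire closed union $Y=X_{-j}$ in one step, which forces it to handle a stratified system of fibrations $\pa Z\to Y$ over a \emph{stratified} base (Proposition \ref{fibration}, itself an induction with van Kampen) and to prove separately that the stratified link fibre is connected and simply connected (Proposition \ref{fund0}). You instead add one pure stratum $X^{-j}$ at a time to $X_{-i}-X_{-j}$, so the base of each fibration is a single manifold stratum and only a genuine fibration ever appears; the price is that the fibre is the full stratified link of $X^{-j}$ in $X_{-i}$, whose simple connectivity you extract from the induction hypothesis applied to the link (i.e., from the first conclusion of the lemma one depth down) rather than from a standalone proposition. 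The two technical points you flag are exactly what the paper's framework supplies: Quinn's theory gives the teardrop/mapping-cylinder neighborhood of the minimum stratum with $N-X^{-j}\to X^{-j}$ a fibration whose fibre is the local holink, and the $\pi_0$ bookkeeping is handled in the paper by the first half of Proposition \ref{fund0} (nonempty links imply each pure stratum is glued to higher ones); you would need to carry the $\pi_0$-bijectivity of $L^{-a}\hookrightarrow L_{-a}$ explicitly through your induction, but this is routine and not a gap.
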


By Lemma \ref{link}, the lemma can be applied to orbit spaces of multiaxial manifolds.

Lemma \ref{fund} follows from Proposition \ref{fund1}. The proposition immediately implies the conclusion $\pi_1(X_{-i}-X_{-j})=\pi_1(X_{-i})$. Then we note that the strata $L_{\alpha}$ of links in $X$ are themselves homotopically stratified spaces, and the links in $L_{\alpha}$ are also the links in $X$. Therefore we may apply the conclusion $\pi_1X^{-i}=\pi_1X_{-i}$ to $L_{\alpha}$ to prove the claim on the strata of links in the lemma. 

The proof of Proposition \ref{fund1} will be based on some well known general observations on the fundamental groups associated to homotopically stratified spaces. In a homotopically stratified space, the neighborhoods of strata are stratified systems of fibrations over the strata. The fundamental groups are related as follows.

\begin{proposition}\label{fibration}
Suppose $E\to X$ is a stratified system of fibrations over a homotopically stratified space $X$. If the fibres are nonempty and connected, then $\pi_1E\to \pi_1X$ is surjective. If the fibres are (nonempty and) connected and simply connected, then $\pi_1E\to \pi_1X$ is an isomorphism.
\end{proposition}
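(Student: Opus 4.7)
The plan is to induct on the depth (or number) of strata of $X$. The base case is when $X$ consists of a single stratum; then by definition a stratified system of fibrations is an honest Serre fibration $F\to E\to X$ with nonempty fibre, and the conclusion is the standard consequence of the long exact sequence of homotopy groups: if $F$ is connected, then $\pi_1 E\to \pi_1 X$ is surjective (the kernel is the image of $\pi_1 F$, but also $\pi_0 F$ controls nothing extra), and if $F$ is moreover simply connected then $\pi_1 F=0$ and $\pi_2 X\to \pi_1 F$ is trivially surjective, so the map is an isomorphism.

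For the inductive step, I would pick a minimal (closed) stratum $X_\alpha\subset X$ and set $Y=X-X_\alpha$, which is itself a homotopically stratified space of strictly smaller depth. By the definition of a homotopically stratified space, $X_\alpha$ has an open neighborhood $U$ that deformation retracts onto $X_\alpha$ through a structure built from the homotopy link $L=\operatorname{holink}(X,X_\alpha)$; moreover, the very definition of a stratified system of fibrations guarantees that $E|_U$ deformation retracts onto $E|_{X_\alpha}$ compatibly, and that $E|_L\to L$, $E|_Y\to Y$ are again stratified systems of fibrations with the same fibres (up to homotopy) as $E\to X$. Choose an open $V\subset X$ with $\bar V\subset Y$ whose union with $U$ covers $X$ and for which $U\cap V$ is homotopy equivalent to (a piece of) $L$. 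Apply van Kampen's theorem simultaneously to $\{U,V\}$ on $X$ and to $\{E|_U,E|_V\}$ on $E$, obtaining a commutative diagram of pushouts
\[
\begin{CD}
\pi_1(E|_{U\cap V}) @>>> \pi_1(E|_U)*\pi_1(E|_V) @>>> \pi_1 E \\
@VVV @VVV @VVV \\
\pi_1(U\cap V) @>>> \pi_1 U*\pi_1 V @>>> \pi_1 X.
\end{CD}
\]
By the inductive hypothesis applied to $E|_V\to V$, $E|_{U\cap V}\to U\cap V$, and the base case applied (after the deformation retraction) to $E|_U\to U\simeq X_\alpha$, each of the three left-hand vertical maps is surjective (resp.\ an isomorphism) under the corresponding hypothesis on the fibre. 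Surjections (resp.\ isomorphisms) of groups are preserved by pushouts, so $\pi_1 E\to \pi_1 X$ inherits the same property.

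The main obstacle is verifying that the stratified-system-of-fibrations structure restricts compatibly to $U$, $V$, and $U\cap V$ in the way needed to feed van Kampen, i.e.\ that the deformation retraction $U\to X_\alpha$ lifts to $E|_U\to E|_{X_\alpha}$ and that its restriction to the holink piece $U\cap V$ remains a stratified fibration; these are precisely the axioms in Quinn's framework but must be invoked carefully. A secondary, routine nuisance is basepoint bookkeeping across components of $L$ and of the fibres, which is handled by the connectivity hypotheses on fibres together with Lemma \ref{link}-type connectivity of pure strata of links; once a single basepoint is fixed in $E$ and transported by lifted paths, all the van Kampen diagrams are based consistently.
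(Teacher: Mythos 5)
Your proof is correct and takes essentially the same route as the paper: induct on the number of strata, decompose $X$ into a neighborhood of one stratum and the complement, apply van Kampen simultaneously upstairs and downstairs, and handle the genuine-fibration pieces by the long exact sequence of homotopy groups. The only cosmetic difference is that you peel off a minimal stratum while the paper peels off the top pure stratum (taking $Y$ to be the union of lower strata), and the paper addresses the connectedness needed for van Kampen by adding components of $Y$ one at a time, which is the same bookkeeping you flag.
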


\begin{proof}
If $E\to X$ is a genuine fibration, then the two claims follow from the exact sequence of homotopy groups associated to the fibration.

Inductively, we only need to consider $X=Z\cup_{\pa Z}Y$, where $Y$ is the union of lower strata, $Z$ is the complement of a regular neighborhood of $Y$, and $\pa Z$ is the boundary of a regular neighborhood of $Y$ as well as the boundary of $Z$. Correspondingly, we have $E=E_Z\cup_{E_{\pa Z}}E_Y$, such that $E_Z\to Z$ is a fibration that restricts to the fibration $E_{\pa Z}\to \pa Z$, and $E_Y\to Y$ is a stratified system of fibrations. Then we consider the map
\[
\pi_1E=\pi_1E_Z*_{\pi_1E_{\pa Z}}\pi_1E_Y
\to
\pi_1X=\pi_1Z*_{\pi_1\pa Z}\pi_1Y.
\]
If the fibres of $E\to X$ are connected, then $\pi_1E_Z\to \pi_1Z$ and $\pi_1E_{\pa Z}\to\pi_1\pa Z$ are surjective by the genuine fibration case, and $\pi_1E_Y\to \pi_1Y$ is surjective by induction. Therefore the map $\pi_1E\to \pi_1X$ is surjective. If the fibres of $E\to Z$ are connected and simply connected, then all the maps are isomorphisms, so that $\pi_1E\to \pi_1X$ is an isomorphism.

The proof makes use of van Kampen's theorem, which requires $Y$ to be connected (which further implies that $\pa Z$ is connected). In general, the argument can be carried out by successively adding connected components of $Y$ to $Z$.
\end{proof}

\begin{proposition}\label{fund0}
If $X$ is a homotopically stratified space, such that all pure strata are connected, and all links are not empty, then $X$ is connected. Moreover, if all pure strata are connected and simply connected, and all links are connected, then $X$ is simply connected.
\end{proposition}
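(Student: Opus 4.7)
The plan is to prove both statements simultaneously by induction on the number of strata of $X$, patterned on the proof of Proposition \ref{fibration}. In the base case, $X$ has a single stratum, so $X$ coincides with its unique pure stratum and is connected (and simply connected in the second claim) directly by hypothesis.

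For the inductive step, let $Y=X_\beta$ be a closed stratum, chosen with $\beta$ minimal in the stratum poset, so that $Y=X^\beta$ is itself a pure stratum and is therefore connected (and simply connected in the second claim). Choose an open regular neighborhood $U$ of $Y$, set $Z=X-U$ with boundary $\partial Z=\bar U\cap Z$, and write $X=Z\cup\bar U$ with intersection $\partial Z$. The homotopically stratified structure provides a deformation retraction of $\bar U$ onto $Y$ and exhibits $\partial Z\to Y$ as a stratified system of fibrations whose fiber is the link $L$ of $Y$ in $X$. Both $Z$ and $L$ are homotopically stratified spaces with strictly fewer strata than $X$. The pure strata of $Z$ are deformation retracts of the corresponding pure strata of $X$ (pushing each $X^\alpha\cap U$ onto $X^\alpha\cap\partial Z$ via the collar structure), and the pure strata of $L$ are the fibers $L^\alpha$ of the bundles $\partial Z\cap X^\alpha\to Y$ for $\alpha>\beta$; the links internal to $Z$ and to $L$ agree with the corresponding links in $X$, so the hypotheses of the proposition transfer to these smaller spaces.

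By induction applied to $Z$ and to $L$, both are connected (resp. simply connected). Proposition \ref{fibration} then implies that $\partial Z$ is nonempty and connected (resp. connected and simply connected), as the total space of a stratified system of fibrations over the connected (resp. simply connected) base $Y$ with fiber $L$. The first claim follows because $X=Z\cup\bar U$ is a union of two connected pieces sharing a nonempty overlap. The second claim follows from van Kampen's theorem applied to the same decomposition: the equalities $\pi_1 Z=\pi_1\bar U=\pi_1\partial Z=1$ force $\pi_1 X=1$. The step I expect to be most delicate is verifying that the hypotheses transfer to $L$, in particular that each $L^\alpha$ is connected (and simply connected in the second claim); since $\partial Z\cap X^\alpha$ deformation retracts from $X^\alpha\cap\bar U$, it suffices that these intersections inherit the required connectivity from the homotopically stratified structure near $Y$. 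In the totally ordered stratum poset relevant for Lemma \ref{fund} (orbit spaces of multiaxial actions) this simplifies considerably, and if multiple minimal strata or disconnected lower strata arose, one could iterate the argument one connected component at a time.
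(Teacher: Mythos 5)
Your overall strategy for the second claim -- decompose $X=Z\cup_{\pa Z}\bar U$ with $Y$ a minimal stratum, induct on the number of strata, and finish with van Kampen -- is exactly the paper's. But there is a genuine error in the step you yourself flagged as delicate: the hypotheses of the proposition do \emph{not} transfer to the link $L$. The proposition assumes the pure strata of $X$ are connected (and simply connected) and the links are nonempty (resp.\ connected); it says nothing about the pure strata of $L$, and these can fail the hypotheses. Concretely, take $X={\bb C}P^2$ stratified by $X_{-1}={\bb C}P^1$: the pure strata ${\bb C}^2$ and ${\bb C}P^1$ are connected and simply connected and the link $S^1$ is connected, so the hypotheses of the second claim hold, yet $L=S^1$ is not simply connected. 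Even more simply, for $X=S^2$ stratified by a point, $\pa Z=S^1$, so your intermediate assertion that $\pa Z$ is simply connected is false in general. (For the first claim the same problem occurs: $X=S^1$ stratified by a point satisfies the hypotheses, but $L=S^0$ and $\pa Z=S^0$ are disconnected, so your appeal to an inductive hypothesis on $L$ and to connectivity of $\pa Z$ breaks down.) Your proposed justification -- that $\pa Z\cap X^{\alpha}$ inherits connectivity from $X^{\alpha}\cap\bar U$ -- does not help, since that space is the total space of the fibration with fibre $L^{\alpha}$, not the fibre itself, and in any case it need not be simply connected when $X^{\alpha}$ is.

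The repair is that you are proving more than you need. For the second claim, $L$ is connected \emph{directly by hypothesis} (``all links are connected''), so Proposition \ref{fibration} gives $\pa Z$ connected (no statement about $\pi_1\pa Z$ is required), and van Kampen then yields $\pi_1X=\pi_1Z*_{\pi_1\pa Z}\pi_1Y$, a quotient of $\pi_1Z*\pi_1Y=1$, hence trivial; this is what the paper does, obtaining connectivity of $L$ from the first part of the proposition applied to $L$ (whose pure strata are glued along the nonempty links of $X$). For the first claim, the paper avoids induction and the space $\pa Z$ altogether: since every link is nonempty, each pure stratum is glued to higher pure strata, so connectivity of all pure strata already forces connectivity of their union $X$; your version goes through once you only use that $\pa Z$ is nonempty, not that it is connected.
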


We remark that a link $L$ of a stratum $X_{\beta}$ in another stratum $X_{\alpha}$ is stratified, with strata $L_{\gamma}$ corresponding to the strata $X_{\gamma}$ satisfying $X_{\beta}\subsetneq X_{\gamma}\sub X_{\alpha}$. Moreover, the link of $L_{\gamma}$ in $L_{\gamma'}$ is the same as the link of $X_{\gamma}$ in $X_{\gamma'}$. The proposition implies that, if the pure strata of the link between any two strata sandwiched between $X_{\beta}$ and $X_{\alpha}$ are (nonempty and) connected and simply connected, then the link of $X_{\beta}$ in $X_{\alpha}$ is simply connected.

\begin{proof}
If the links are not empty, then any pure stratum is glued to higher pure strata. Therefore the connectivity of all pure strata implies the connectivity of the union, which is the whole $X$.

Now assume that all pure strata are connected and simply connected, and all links are connected. Let $Y$ be a minimum stratum. Then we have decomposition $X=Z\cup_{\pa Z}Y$ similar to the proof of Proposition \ref{fibration}. The complement $Z$ of a regular neighborhood of $Y$ is a stratified space, with the pure strata the same as the pure strata of $X$, except the stratum $Y$. Moreover, the links in $Z$ are the same as the links in $X$. By induction, we may assume that $Z$ (which has one less stratum than $X$) is simply connected. Moreover, $Y$ is a pure stratum and is already assumed to be simply connected. If we know that $\pa Z$ is connected, then we can apply van Kampen's theorem and conclude that $\pi_1X=\pi_1Z*_{\pi_1\pa Z}\pi_1Y$ is trivial.

To see that $\pa Z$ is connected, we note that the base of the fibration $\pa Z\to Y$ is connected. So it is sufficient to show that the fibre $L$ of the fibration is also connected. The fibre is the link $L$ of $Y$ in $X$, and is a stratified space with one less stratum than $X$. Moreover, $L$ has the same link as $X$. Since all pure strata of $X$ are connected, by the first part of the proposition, $L$ is connected. 
\end{proof}

\begin{proposition}\label{fund1}
Suppose $X$ is a homotopically stratified space, and $Y$ is a closed union of strata of $X$. If   for any link between strata of $X$, those pure strata of the link that are not contained in $Y$ are connected and simply connected, then $\pi_1(X-Y)=\pi_1X$.
\end{proposition}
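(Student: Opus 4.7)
The plan is to induct on the number of strata of $Y$. The base case $Y = \emptyset$ is trivial. For the inductive step, let $Y_1$ be a maximal stratum of $Y$, so that $Y_1$ is open in $Y$ and $Y - Y_1$ is closed in $Y$, hence closed in $X$. By the inductive hypothesis applied to the closed union $Y - Y_1$ (which has one fewer stratum), we have $\pi_1(X - (Y - Y_1)) = \pi_1 X$, so it suffices to show $\pi_1(X' - Y_1) = \pi_1 X'$ where $X' := X - (Y - Y_1)$. A key bookkeeping point: the closure of $Y_1$ in $X$ is $Y_1$ together with strictly lower strata, all of which lie in $Y$ (as $Y$ is closed) and therefore in $Y - Y_1$; thus the closure of $Y_1$ in $X'$ is just $Y_1$ itself, and $Y_1$ is a closed, in fact minimum, stratum of the open stratified subspace $X'$.

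Next I would apply van Kampen's theorem to the decomposition $X' = (X' - Y_1) \cup N(Y_1)$, where $N(Y_1)$ is an open regular neighborhood of $Y_1$ in $X'$. Since $N(Y_1)$ deformation retracts onto $Y_1$, and since $N(Y_1) - Y_1 = (X' - Y_1) \cap N(Y_1)$ is a stratified system of fibrations over $Y_1$ whose fibre is the link $L$ of $Y_1$ in $X'$, it suffices to show $L$ is connected and simply connected. Granted this, Proposition \ref{fibration} gives that $\pi_1(N(Y_1) - Y_1) \to \pi_1 Y_1$ is an isomorphism, and the van Kampen pushout then collapses to give $\pi_1(X' - Y_1) \cong \pi_1 X'$, completing the induction.

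The remaining task is to verify that $L$ is connected and simply connected, which I would do by applying Proposition \ref{fund0} to $L$ itself as a homotopically stratified space. The pure strata of $L$ correspond to strata $X_\gamma$ of $X$ satisfying $Y_1 \subsetneq X_\gamma$ and $X_\gamma \not\subseteq Y - Y_1$; because $Y_1$ is minimum in $Y$, any $X_\gamma$ strictly above $Y_1$ that happened to lie in $Y$ would necessarily lie in $Y - Y_1$, so in fact $X_\gamma \not\subseteq Y$, and the hypothesis of the proposition then guarantees each such pure stratum is connected and simply connected. Likewise, every link inside $L$ is the link in $X$ between two strata both strictly above $Y_1$ and both in $X'$, and by the same argument its pure strata avoid $Y$ and are therefore connected; Proposition \ref{fund0} part 1 then ensures every link in $L$ is connected, and part 2 concludes that $L$ is simply connected. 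The principal obstacle is precisely this bookkeeping across the recursion: the hypothesis is stated globally for links in $X$, yet I need connectedness of pure strata of the link of $Y_1$ in the substratified subspace $X'$, and it works out exactly because the maximal-stratum reduction makes $Y_1$ minimum in what remains, forcing every surviving stratum strictly above $Y_1$ to lie outside $Y$ entirely.
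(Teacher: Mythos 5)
The second half of your argument --- removing the single stratum $Y_1$ once it has become closed and minimal in $X'$, checking via Proposition \ref{fund0} that its link in $X'$ is connected and simply connected, and then applying Proposition \ref{fibration} and van Kampen --- is sound, and is essentially the paper's argument specialized to the case where $Y$ is a single closed stratum. The gap is in your first step. To invoke the inductive hypothesis for the pair $(X,\,Y-Y_1)$ you must verify the hypothesis of the proposition for that pair: every pure stratum of every link in $X$ that is \emph{not contained in $Y-Y_1$} must be connected and simply connected. This class is strictly larger than the one your given hypothesis controls. Indeed, if $X_\beta$ is any stratum with $X_\beta\subsetneq\overline{Y_1}$, the link of $X_\beta$ in $X$ has a pure stratum corresponding to the stratum $\overline{Y_1}$; that pure stratum is contained in $Y$, so the original hypothesis says nothing about it, yet it is not contained in $Y-Y_1$, so the inductive application requires it to be connected and simply connected. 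Concretely, with three strata $X\supset\overline{B}\supset C$ and $Y=\overline{B}$, your step asserts $\pi_1(X-C)=\pi_1X$, which depends on the link of $C$ inside $\overline{B}$ --- a space on which the hypothesis places no restriction whatsoever. So the induction does not close up as stated; whether the intermediate claim even holds depends on data inside $Y$ that the hypothesis deliberately excludes.

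The repair is to abandon the stratum-by-stratum peeling and remove all of $Y$ at once, which is what the paper does: let $Z$ be the complement of a regular neighborhood of $Y$, so that $\pa Z\to Y$ is a stratified system of fibrations whose fibre over $y\in Y$ is the stratified space assembled from exactly those pure strata of the link of $Y^y$ in $X$ that are \emph{not} contained in $Y$ --- precisely the strata your hypothesis controls. Proposition \ref{fund0} (with the remark following it) then shows each fibre is connected and simply connected, Proposition \ref{fibration} gives $\pi_1\pa Z=\pi_1Y$, and van Kampen yields $\pi_1X=\pi_1Z*_{\pi_1\pa Z}\pi_1Y=\pi_1Z=\pi_1(X-Y)$. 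Your own second step is exactly this argument for $Y=Y_1$ a closed stratum; the point is that the fibration-over-$Y$ formulation lets you handle all of $Y$ simultaneously, so no intermediate pair $(X,\,Y-Y_1)$ ever needs to satisfy the hypothesis.
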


\begin{proof}
We have a decomposition $X=Z\cup_{\pa Z}Y$ similar to that in the proof of Proposition \ref{fibration}. The fibre of the stratified system of fibrations $\pa Z\to Y$ is a stratified space $L_y$ depending on the location of the point $y\in Y$. If $Y^y$ is the pure stratum containing $y$, then the pure strata of $L_y$ are the pure strata of the link of $Y^y$ in $X$ that are not contained in $Y$. By Proposition \ref{fund0} and the remark afterwards, the assumption of the proposition implies that $L_y$ is connected and simply connected. Then we may apply Proposition \ref{fibration} to get $\pi_1\pa Z=\pi_1Y$. Further application of van Kampen's theorem gives us $\pi_1X=\pi_1Z*_{\pi_1\pa Z}\pi_1Y=\pi_1 Z=\pi_1(X-Y)$.
\end{proof}

\section{General Decomposition Theorem}
\label{obstruction}

The homotopy properties in the last section will be used in producing a decomposition theorem for the structure sets of certain stratified spaces. We will use the spectra version of the surgery obstruction, homology and structure set. The equality of spectra really means homotopy equivalence.

\begin{theorem}\label{generalsplit}
Suppose $X=X_0\supset X_{-1}\supset X_{-2}\supset \dotsb$ is a homotopically stratified space, satisfying the following properties:
\begin{enumerate}
\item The homotopy link of $X_{-1}$ in $X$ is homotopy equivalent to ${\bb C}P^r$ with even $r$.
\item The link fibration of $X^{-1}$ in $X$ is orientable, in the sense that the monodromy preserves the fundamental class of the fibre. 
\item For any $i$, the top two pure strata of the link of $X_{-i}$ in $X$ are connected and simply connected.
\end{enumerate}
Then there is a natural homotopy equivalence of surgery obstructions
\[
{\bb L}(X)={\bb L}(X,\rel X_{-2})\oplus {\bb L}(X_{-2}).
\]
Moreover, we have
\[
{\bb L}(X,\rel X_{-2})
={\bb L}(\pi_1X,\pi_1X_{-1}),
\]
and
\[
\pi_1X=\pi_1(X-X_{-1})=\pi_1\bar{X}^0,\quad
\pi_1X_{-1}=\pi_1X^{-1}=\pi_1\pa \bar{X}^0.
\]
\end{theorem}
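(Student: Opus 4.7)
The plan is to (a) pin down all fundamental groups using the simple-connectivity of the top pure strata of links, (b) construct a section of the restriction ${\bb L}(X)\to{\bb L}(X_{-2})$ using an orientable ${\bb C}P^{2k}$-link transfer (where $r=2k$), and (c) identify the resulting fibre ${\bb L}(X,\rel X_{-2})$ with ${\bb L}(\bar X^{0},\pa\bar X^{0})\simeq{\bb L}(\pi_{1}X,\pi_{1}X_{-1})$ via Ranicki's algebraic surgery.

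For the fundamental groups, I would apply Proposition \ref{fund1} with $Y=X_{-1}$: the hypothesis that the top two pure strata of each link are connected and simply connected guarantees that the single pure stratum of each link not contained in $Y$ (its top pure stratum) is connected and simply connected, yielding $\pi_{1}(X-X_{-1})=\pi_{1}X$. Since $\bar X^{0}$ has interior $X-X_{-1}$, this gives $\pi_{1}\bar X^{0}=\pi_{1}X$. Repeating the argument inside $X_{-1}$ with $Y=X_{-2}$, where the hypothesis is inherited since the links in $X_{-1}$ are the links in $X$ between lower strata, yields $\pi_{1}X^{-1}=\pi_{1}X_{-1}$. Finally, Proposition \ref{fibration} applied to the link fibration $\pa\bar X^{0}\to X^{-1}$ with simply connected fibre ${\bb C}P^{r}$ gives $\pi_{1}\pa\bar X^{0}=\pi_{1}X^{-1}=\pi_{1}X_{-1}$.

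For the splitting, the restriction ${\bb L}(X)\to{\bb L}(X_{-2})$ has homotopy fibre ${\bb L}(X,\rel X_{-2})$ by definition, so it suffices to construct a section. I would construct this as a ${\bb C}P^{2k}$-bundle transfer: given stratified surgery data over $X_{-2}$, extend it outward across a tubular neighborhood of $X_{-2}$ in $X$ using the orientable link-bundle structure, which guarantees global coherence. Because $r=2k$ is even, $\sigma({\bb C}P^{2k})=1$ and $\chi({\bb C}P^{2k})=2k+1$ is odd; the composition of the section with restriction back to $X_{-2}$ is multiplication by $\sigma({\bb C}P^{2k})=1$ in $L$-theory, so it is the identity and the cofibre sequence splits. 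To identify the fibre, note that rel $X_{-2}$ the surgery obstructions are supported on the two-stratum open subspace $X-X_{-2}$ and completed to the manifold-with-boundary pair $(\bar X^{0},\pa\bar X^{0})$; the intermediate stratum $X^{-1}$ contributes nothing independent after completion, its $L$-theory being absorbed into $\pa\bar X^{0}$ via the same even-${\bb C}P$ transfer. Hence ${\bb L}(X,\rel X_{-2})\simeq{\bb L}(\bar X^{0},\pa\bar X^{0})$, and Ranicki's identification for manifolds with boundary gives ${\bb L}(\bar X^{0},\pa\bar X^{0})\simeq{\bb L}(\pi_{1}\bar X^{0},\pi_{1}\pa\bar X^{0})={\bb L}(\pi_{1}X,\pi_{1}X_{-1})$ by the previous paragraph.

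The main obstacle is the construction of the ${\bb C}P^{2k}$-transfer and the verification that its composition with restriction is the identity at the spectrum level, rather than only up to multiplication by a unit or only rationally. The required ingredients---orientability of the link fibration, $\sigma({\bb C}P^{2k})=1$, and oddness of $\chi({\bb C}P^{2k})$---are precisely what the hypotheses furnish, but combining them cleanly in Ranicki's algebraic-surgery framework is the delicate point, and the same transfer analysis is what justifies the absorption of $X^{-1}$ into $\pa\bar X^{0}$ used in the fibre identification.
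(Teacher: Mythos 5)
Your computation of the fundamental groups and your identification of the fibre ${\bb L}(X,\rel X_{-2})$ with ${\bb L}(\pi_1X,\pi_1X_{-1})$ are essentially the paper's argument: the fibre is stratified homotopy equivalent to the two-stratum space $Z=\overline{X-N(X_{-2})}$, whose link fibration over $Z_{-1}$ has fibre ${\bb C}P^r$ with $r$ even, so the L\"uck--Ranicki transfer is an equivalence and ${\bb L}(Z)={\bb L}(\pi_1Z,\pi_1Z_{-1})$ (Proposition \ref{periodicity}). That part is sound.

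The gap is in your construction of the splitting. You propose a section of ${\bb L}(X)\to{\bb L}(X_{-2})$ given by a ``${\bb C}P^{2k}$-bundle transfer'' over a tubular neighborhood of $X_{-2}$, with the composite being multiplication by $\sigma({\bb C}P^{2k})=1$. But the neighborhood of $X_{-2}$ in $X$ is not a ${\bb C}P^{2k}$-block bundle: the link of $X_{-i}$ in $X$ for $i\ge 2$ is a \emph{stratified} space $L\supset L_{-1}\supset\dotsb$, not a manifold, and only the link between the \emph{adjacent} strata $X^{-1}$ and $X^0$ is ${\bb C}P^r$. So there is no signature-one fibre to transfer along, and the asserted identity composite has no meaning. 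What actually produces the splitting in the paper is the contractibility of ${\bb L}(E)$, where $E=\pa N(X_{-2})$ is the boundary of the regular neighborhood, viewed as a two-stratum space: Proposition \ref{periodicity} gives ${\bb L}(E)={\bb L}(\pi_1(E-E_{-1}),\pi_1E_{-1})$, and hypothesis~3 (connectivity and simple connectivity of the \emph{top two} pure strata of every link) forces $\pi_1(E-E_{-1})=\pi_1E_{-1}=\pi_1X_{-2}$ via Proposition \ref{fibration}, so ${\bb L}(E)$ vanishes by the $\pi$--$\pi$ theorem. This gives a retraction ${\bb L}(X)\to{\bb L}(Z,E)\simeq{\bb L}(Z)\simeq{\bb L}(X,\rel X_{-2})$ splitting the fibre inclusion, which is dual to, but not the same as, a section of the restriction to $X_{-2}$. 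Note that in your write-up hypothesis~3 is used only for $\pi_1$ bookkeeping; in the actual proof it is the engine of the splitting, and without the vanishing of ${\bb L}(E)$ (or an equivalent statement) the decomposition does not follow. The remark about $\chi({\bb C}P^{2k})$ being odd is a red herring; only the middle intersection form of ${\bb C}P^r$ enters.
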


To prove the theorem, we first establish the following result, which is essentially a reformulation of the periodicity for the classical surgery obstruction \cite[Theorem 9.9]{wa2}.

\begin{proposition}\label{periodicity}
Suppose $X$ is a two-strata space, such that the link fibration of $X_{-1}$ in $X$ is an orientable fibration with fibre homotopy equivalent to ${\bb C}P^r$ with even $r$. Then
\[
{\bb L}(X)
={\bb L}(\pi_1X,\pi_1X_{-1}),\quad
\pi_1X=\pi_1(X-X_{-1}).
\]
\end{proposition}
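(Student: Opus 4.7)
The plan is to prove the two assertions separately: the fundamental-group identity is immediate from the paper's machinery, while the L-theoretic identity is classical ${\bb C}P^{2k}$-periodicity repackaged in the stratified framework.

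For $\pi_1X=\pi_1(X-X_{-1})$, I apply Proposition \ref{fund1} with $Y=X_{-1}$. Since $X$ has only two strata, the only link to consider is the link of $X_{-1}$ in $X$, which by hypothesis is homotopy equivalent to ${\bb C}P^r$; in particular its unique pure stratum not contained in $Y$ is connected and simply connected. The hypothesis of Proposition \ref{fund1} is therefore met, giving the equality at once.

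For the L-theoretic identity, let $\bar{X}^0$ be the complement of a regular neighborhood $N$ of $X_{-1}$ in $X$, so that $\pa\bar{X}^0\to X_{-1}$ is the orientable ${\bb C}P^{2k}$-link fibration. Stratified surgery theory \cite{we1} presents a surgery problem on $X$ as the datum of a surgery problem on the manifold pair $(\bar{X}^0,\pa\bar{X}^0)$ together with one on $X_{-1}$, compatible on $\pa\bar{X}^0$ via the ${\bb C}P^{2k}$-bundle transfer ${\bb L}(\pi_1X_{-1})\to {\bb L}(\pi_1\pa\bar{X}^0)$. Wall's classical surgery periodicity \cite[Theorem 9.9]{wa2}, applied to this orientable ${\bb C}P^{2k}$-fibration, asserts that this transfer is a homotopy equivalence, since multiplication by the symmetric signature of ${\bb C}P^{2k}$ is a unit in L-theory. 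Consequently the surgery problem on $X_{-1}$ is pinned down up to equivalence by the boundary values of the surgery on $(\bar{X}^0,\pa\bar{X}^0)$, so ${\bb L}(X)\simeq {\bb L}(\bar{X}^0,\pa\bar{X}^0)={\bb L}(\pi_1\bar{X}^0,\pi_1\pa\bar{X}^0)$.

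It remains to identify fundamental groups: the homotopy equivalence $\bar{X}^0\simeq X-X_{-1}$ together with the identity just proved gives $\pi_1\bar{X}^0=\pi_1X$, and the long exact sequence of homotopy groups for the simply-connected-fibre fibration $\pa\bar{X}^0\to X_{-1}$ gives $\pi_1\pa\bar{X}^0=\pi_1X_{-1}$. Combining, ${\bb L}(X)={\bb L}(\pi_1X,\pi_1X_{-1})$. The main obstacle is the precise identification of the stratified L-theory of $X$ with the pair $(\bar{X}^0,\pa\bar{X}^0)$-data glued to $X_{-1}$ along the link transfer; once Weinberger's framework is set up and the gluing map is recognised as the classical ${\bb C}P^{2k}$-bundle transfer of Wall, inverting the latter via periodicity finishes the argument.
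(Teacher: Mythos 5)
Your proof is correct and follows essentially the same route as the paper: decompose $X$ into the complement $Z=\bar{X}^0$ of a regular neighborhood of $X_{-1}$ glued to the mapping cylinder of the link fibration, observe that the transfer ${\bb L}(\pi_1X_{-1})\to{\bb L}(\pi_1\pa\bar{X}^0)$ is an equivalence because it amounts to tensoring with the signature-one intersection form of ${\bb C}P^{r}$ ($r$ even), and conclude ${\bb L}(X)\simeq{\bb L}(\bar{X}^0,\pa\bar{X}^0)$ with the fundamental groups identified via van Kampen and the simply-connected-fibre fibration. The only caveat is that the link fibration need not be a product and the paper's orientability hypothesis is weaker than that of L\"uck--Ranicki, so the invertibility of the transfer should be justified by the up-down formula of \cite{lr} applied to the monodromy module $H_r{\bb C}P^r={\bb Z}$ rather than by Wall's product periodicity alone.
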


\begin{proof}
Let $Z$ be the complement of a regular neighborhood of $X_{-1}$ in $X$. Let $E$ be the boundary of $Z$ as well as the boundary of the regular neighborhood. Then $X=Z\cup E\times[0,1]\cup X_{-1}$, and $E\to X_{-1}$ is an orientable fibration with fibre homotopy equivalent to ${\bb C}P^r$.

The surgery obstruction ${\bb L}(X)$ of the two-strata space $X$ fits into a fibration
\[
{\bb L}(E\times[0,1]\cup X_{-1})\to
{\bb L}(X)\to {\bb L}(Z,E),
\]
where the mapping cylinder $E\times[0,1]\cup X_{-1}$ is a regular neighborhood of $X_{-1}$ in $X$ and is a two-strata space with $X_{-1}$ as the lower stratum. The surgery obstruction of the mapping cylinder further fits into a fibration
\[
{\bb L}(E\times[0,1]\cup X_{-1})\xrightarrow{\text{res}} 
{\bb L}(X_{-1})\xrightarrow{\text{trf}} 
{\bb L}(E),
\]
given by the restriction and the transfer. 

Since the fibration $E\to X_{-1}$ is orientable, and the fibre ${\bb C}P^r$ is simply connected, the surgery obstruction groups $\pi_*{\bb L}(X_{-1})$ and $\pi_*{\bb L}(E)$ are described in terms of the same quadratic forms (and formations). Moreover, the effect of the transfer map on surgery obstructions can be computed by the up-down formula of \cite[Theorem 2.1]{lr}. Specifically, the transfer of surgery obstructions is obtained by tensoring with the usual $\pi_1X_{-1}$-equivariant intersection form on the middle homology $H_r{\bb C}P^r={\bb Z}$, where the $\pi_1X_{-1}$-module structure on ${\bb Z}$ comes from the monodromy. Since this tensoring operation induces an isomorphism on the surgery obstruction groups, we conclude that the transfer map is a homotopy equivalence.

We remark that our notion of orientability, as given by the second condition in Theorem \ref{generalsplit}, is weaker than the one in \cite{lr}. Therefore Corollary 2.2 of \cite{lr} cannot be directly applied.

Since the transfer map is a homotopy equivalence, the second fibration implies that ${\bb L}(E\times[0,1]\cup X_{-1})$ is contractible. Then the first fibration further implies that ${\bb L}(X)$ and ${\bb L}(Z,E)$ are homotopy equivalent.

It remains to compute ${\bb L}(Z,E)$. The fibration ${\bb C}P^r\to E\to X_{-1}$ implies $\pi_1E=\pi_1X_{-1}$. By van Kampen's theorem, we have $\pi_1X=\pi_1Z *_{\pi_1E}\pi_1X_{-1}=\pi_1Z=\pi_1(X-X_{-1})$. 
\end{proof}

\begin{proof}[Proof of Theorem \ref{generalsplit}]
Let $Z$ be the complement of a regular neighborhood of $X_{-2}$ in $X$. Let $E$ be the boundary of the regular neighborhood. Then $Z$ and $E$ are two-strata spaces with lower strata $Z_{-1}=Z\cap X_{-1}$ and $E_{-1}=E\cap X_{-1}$. Moreover, $E$ is the boundary of $Z$ in the sense that $E$ has a collar neighborhood in $Z$. We will use $Z$ and $E$ to denote the two-strata spaces, and use $(Z,E)$ to denote the space $Z$ considered as a four-strata space, in which the two-strata of $E$ are also counted.

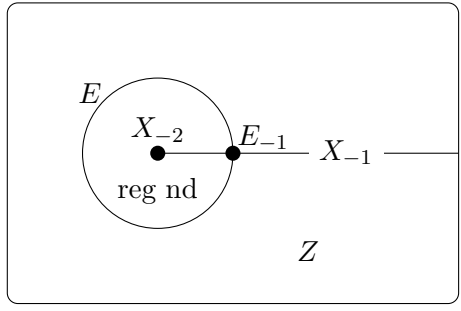
\begin{figure}[h]
\centering
\begin{tikzpicture}[>=latex]

\draw[rounded corners] 
	(-2,-2) rectangle (4,2);

\draw
	(0,0) -- node[black, fill=white, right] {\small $X_{-1}$} (4,0)
	(0,0) circle (1);

\fill
	(0,0) circle (0.1)
	(1,0) circle (0.1);;

\node at (0,0.3) {\small $X_{-2}$};
\node at (2,-1.3) {\small $Z$};
\node at (-0.9,0.8) {\small $E$};
\node at (1.4,0.2) {\small $E_{-1}$};
\node at (0,-0.5) {\small reg nd};

\end{tikzpicture}
\caption{Regular neighborhood of $X_{-2}$ in $X$}
\label{simple}
\end{figure}

Consider a commutative diagram of natural maps of surgery obstructions.
\begin{equation*}\begin{CD}
{\bb L}(Z) @>{\simeq}>> {\bb L}(Z,E) @>>> {\bb L}(E) \\
@VV{\simeq}V @AAA \\
{\bb L}(X,\rel X_{-2})  @>>> {\bb L}(X) @>>> {\bb L}(X_{-2}) 
\end{CD}\end{equation*}
Both horizontal lines are fibrations of spectra. The vertical $\simeq$ is due to the fact that the inclusion $Z\to X-X_{-2}$ of two-strata spaces is a stratified homotopy equivalence. The horizontal $\simeq$ will be a consequence of the fact that ${\bb L}(E)$ is homotopically trivial. The two equivalences give natural splitting to the map ${\bb L}(X,\rel X_{-2})\to{\bb L}(X)$. Then the bottom fibration implies ${\bb L}(X)$ is naturally homotopy equivalent to ${\bb L}(X,\rel X_{-2})\oplus{\bb L}(X_{-2})$.

To see the triviality of ${\bb L}(E)$, we note that the link of $E_{-1}$ in $E$ is the same as the link ${\bb C}P^r$ of $X_{-1}$ in $X$. Therefore we may apply Proposition \ref{periodicity} to $E$ and get 
\[
{\bb L}(E)={\bb L}(\pi_1(E-E_{-1}),\pi_1E_{-1}).
\]
Let $L$ be the link of $X_{-i}$ in $X$, then we have stratified systems of fibrations
\[
L-L_{-1}\to E-E_{-1}\to X_{-2},\quad
L_{-1}-L_{-2}\to E_{-1}\to X_{-2}.
\]
By the third condition, the fibres are always connected and simply connected, and we may apply Proposition \ref{fibration} to get $\pi_1(E-E_{-1})=\pi_1E_{-1}=\pi_1X_{-2}$. By the $\pi$-$\pi$ theorem of the classical surgery theory, we conclude that ${\bb L}(E)$ is homotopically trivial. 

Like $E$, the link of $Z_{-1}$ in $Z$ is also the same as the link ${\bb C}P^r$ of $X_{-1}$ in $X$. Then Proposition \ref{periodicity} tells us 
\[
{\bb L}(X,\rel X_{-2})
={\bb L}(Z)
={\bb L}(\pi_1Z,\pi_1Z_{-1}).
\]
By $Z\simeq X-X_{-2}$, $Z_{-1}\simeq X_{-1}-X_{-2}=X^{-1}$ and Lemma \ref{fund}, we have
\[
\pi_1Z=\pi_1(X-X_{-2})=\pi_1(X-X_{-1})=\pi_1 X,\quad
\pi_1Z_{-1}=\pi_1X^{-1}=\pi_1X_{-1}.
\]
By $X-X_{-1}\simeq \bar{X}^0$ and applying Proposition \ref{fibration} to $\pa \bar{X}^0\to X_{-1}$, which is a stratified system of fibrations with the top strata of the link of $X_{-i}$ in $X$ as fibres, we get
\[
\pi_1Z=\pi_1\bar{X}^0,\quad 
\pi_1Z_{-1}=\pi_1\pa\bar{X}^0.
\qedhere
\]
\end{proof}

The natural splitting for the surgery obstruction in Theorem \ref{generalsplit} induces similar natural splitting for the structure set. 

\begin{theorem}\label{generalsplit2}
Suppose $X=X_0\supset X_{-1}\supset X_{-2}\supset \dotsb$ is a homotopically stratified space, satisfying the following properties:
\begin{enumerate}
\item The homotopy link of $X_{-1}$ in $X$ is homotopy equivalent to ${\bb C}P^r$ with even $r$.
\item The link fibration of $X^{-1}$ in $X$ is orientable as in Theorem \ref{generalsplit}.  
\item The pure strata of all links are connected and simply connected..
\end{enumerate}
Then there is a natural homotopy equivalence of structure sets 
\[
{\bb S}(X)={\bb S}(X,\rel X_{-2})\oplus {\bb S}(X_{-2}).
\]
Moreover, we have
\[
{\bb S}(X,\rel X_{-2})
={\bb S}(\bar{X}^0,\pa \bar{X}^0)
={\bb S}^{\text{\rm alg}}(X,X_{-1}).
\]
\end{theorem}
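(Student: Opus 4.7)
The proof will run a structure set analog of the diagram used in the proof of Theorem \ref{generalsplit}, combined with two successive identifications of the relative factor.

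First, I would upgrade the key splitting from surgery obstructions to structure sets. The stratified structure set fits into a natural surgery exact sequence ${\bb S}(-)\to{\bb H}(-;{\bb L})\to{\bb L}(-)$, so each step in the proof of Theorem \ref{generalsplit} has a counterpart at the level of $\bb S$. The stratified homotopy equivalence $Z\simeq X-X_{-2}$ still yields ${\bb S}(Z)\simeq {\bb S}(X,\rel X_{-2})$, and the ${\bb C}P^r$-transfer argument of Proposition \ref{periodicity} goes through for $\bb S$ because the up-down formula and the $\pi$-$\pi$ theorem control the whole surgery exact sequence of $E$: for even $r$, the transferred intersection form on $H_r({\bb C}P^r)={\bb Z}$ is unimodular, so the transfer is an equivalence of spectra in each layer compatibly, and the fibers match. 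Running the diagram of the proof of Theorem \ref{generalsplit} with ${\bb S}$ in place of ${\bb L}$ then yields the natural splitting ${\bb S}(X)={\bb S}(X,\rel X_{-2})\oplus {\bb S}(X_{-2})$.

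Second, to identify ${\bb S}(X,\rel X_{-2})$ with the manifold structure set ${\bb S}(\bar X^0,\pa\bar X^0)$, I would note that stratified simple homotopy equivalences to $X$ that restrict to homeomorphisms on $X_{-2}$ are the same data as stratified simple homotopy equivalences of the pair $(Z,E)$ that are homeomorphisms on $E$, so ${\bb S}(X,\rel X_{-2})={\bb S}(Z,E)$. Since $Z$ is two-strata with ${\bb C}P^r$ link, the same periodicity argument as in Proposition \ref{periodicity}, now applied at the level of structure sets, identifies ${\bb S}(Z,E)$ with the classical structure set of the manifold with boundary $(\bar X^0,\pa\bar X^0)$; the top stratum $Z-Z_{-1}$ equals $X^0$, and its completion inserts the boundary $\pa \bar X^0$.

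Third, the identification ${\bb S}(\bar X^0,\pa\bar X^0) = {\bb S}^{\text{alg}}(X,X_{-1})$ follows from classical manifold surgery. The structure set of the pair $(\bar X^0,\pa\bar X^0)$ is the homotopy fiber of the relative assembly ${\bb H}(\bar X^0,\pa\bar X^0;{\bb L})\to {\bb L}(\pi_1\bar X^0,\pi_1\pa\bar X^0)$. By Lemma \ref{fund}, $\pi_1\bar X^0=\pi_1 X$ and $\pi_1\pa\bar X^0=\pi_1 X_{-1}$. Since $\bar X^0$ is the complement of an open regular neighborhood of $X_{-1}$ in $X$, collapsing that neighborhood gives ${\bb H}(\bar X^0,\pa\bar X^0;{\bb L})\simeq {\bb H}(X,X_{-1};{\bb L})$. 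Matching fibers with the definition of ${\bb S}^{\text{alg}}(X,X_{-1})$ completes the identification.

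The main obstacle is the first step. Unlike the ${\bb L}$-level proof, which hinges on the contractibility of ${\bb L}(E)$, the structure set ${\bb S}(E)$ is generally nontrivial, so one cannot simply rerun the diagram of Theorem \ref{generalsplit} verbatim. The argument must instead be executed on the full surgery exact sequence column for $E$, using compatibility of the ${\bb L}$-splitting with pushforward on ${\bb H}(-;{\bb L})$ and assembly; the technical heart is checking that the ${\bb C}P^r$-transfer equivalence furnished by Theorem \ref{generalsplit} really lifts to a natural equivalence of the entire surgery exact sequence, so that the splitting of ${\bb L}$ descends cleanly to $\bb S$.
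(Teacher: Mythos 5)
Your steps 2 and 3 (identifying ${\bb S}(X,\rel X_{-2})$ with ${\bb S}(\bar X^0,\pa\bar X^0)$ and with ${\bb S}^{\text{alg}}(X,X_{-1})$ via periodicity on the local links, excision, and Lemma \ref{fund}) follow the paper's computation in outline, although you gloss over the point that one must analyze the local coefficient spectrum ${\bb L}(\loc(X,\rel X_{-2}))$ pointwise: it is ${\bb L}(e)$ on $X^0$, contractible on $X^{-1}$ by Proposition \ref{periodicity} (this is where even $r$ enters), and contractible on $X_{-2}$ by Theorem \ref{generalsplit} together with the $\pi$-$\pi$ theorem (this is where the full strength of condition 3 enters).

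The genuine gap is in your step 1, and your proposed fix does not work as stated. You claim the ${\bb C}P^r$-transfer is ``an equivalence of spectra in each layer compatibly, and the fibers match.'' That is false: the up-down formula makes the transfer ${\bb L}(X_{-1})\to{\bb L}(E)$ an equivalence of $L$-spectra, but the corresponding map of normal invariants ${\bb H}(X_{-1};{\bb L})\to{\bb H}(E;{\bb L})$ is not an equivalence ($E$ is a ${\bb C}P^r$-bundle over $X_{-1}$ of strictly larger dimension with different homology), and consequently ${\bb S}(X_{-1})\to{\bb S}(E)$ is not an equivalence either; ${\bb S}(E)$ is genuinely nontrivial, as you yourself observe. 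So the diagram from the proof of Theorem \ref{generalsplit} cannot be rerun with ${\bb S}$ in place of ${\bb L}$, and ``lifting the transfer equivalence to the whole surgery exact sequence'' is not the right mechanism. What the paper actually does is different: condition 3 (via the topological $h$/$s$-cobordism theory of Quinn and Steinberger) puts $X$ in the ``PT category,'' so ${\bb S}(X)$ is the fiber of the unstable assembly map ${\bb H}(X;{\bb L}(\loc X))\to{\bb L}(X)$; the splitting of Theorem \ref{generalsplit} is \emph{natural}, so it can be applied to the local coefficient spectra ${\bb L}(\loc X)$ themselves, splitting the homology term as ${\bb H}(X;{\bb L}(\loc(X,\rel X_{-2})))\oplus{\bb H}(X;{\bb L}(\loc X_{-2}))$ compatibly with the splitting of ${\bb L}(X)$; and since ${\bb L}(\loc X_{-2})$ is concentrated on $X_{-2}$, the second assembly map is that of $X_{-2}$, with fiber ${\bb S}(X_{-2})$, while the first has fiber ${\bb S}(X,\rel X_{-2})$. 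Taking fibers of the split assembly maps gives the decomposition of ${\bb S}(X)$. This coefficient-level application of the $L$-theory splitting is the missing idea; without it your first step does not go through.
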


With no additional work, the third condition can be replaced by the (weaker) third condition in Theorem \ref{generalsplit}, plus the requirement that the fundamental groups $\pi$ of the pure strata of all links satisfy $Wh_i(\pi)=0$ for $i\le 1$.

\begin{proof}
By the topological $h$-cobordism theory \cite{quinn, st}, the third condition implies that the neighborhoods of strata have block bundle structure, the stratified space can be considered as being of the ``PT category'', and the structure set can be computed by the ``unstable surgery fibration'' \cite[Chapter 8]{we1}
\[
{\bb S}(X)\to {\bb H}(X;{\bb L}(\loc X))\to {\bb L}(X).
\]

By Theorem \ref{generalsplit}, we have natural splitting of the surgery spectra
\[
{\bb L}(X)
={\bb L}(X,\rel X_{-2})\oplus {\bb L}(X_{-2})
={\bb L}(\pi_1X,\pi_1X_{-1})\oplus {\bb L}(X_{-2}).
\]
Since the splitting is natural, it can be applied to the coefficient ${\bb L}(\loc X)$ in the homology and induces compatible assembly maps
\[
{\bb H}(X;{\bb L}(\loc(X,\rel X_{-2})))\to {\bb L}(X,\rel X_{-2}),\quad
{\bb H}(X;{\bb L}(\loc X_{-2}))\to {\bb L}(X_{-2}).
\] 
The stratified surgery theory tells us that the homotopy fibre of the first assembly map is the structure set ${\bb S}(X,\rel X_{-2})$. Moreover, we have ${\bb H}(X;{\bb L}(\loc X_{-2}))={\bb H}(X_{-2};{\bb L}(\loc X_{-2}))$ because the coefficient spectrum ${\bb L}(\loc X_{-2})$ is concentrated on $X_{-2}$. Therefore the homotopy fibre of the second assembly map is the structure set ${\bb S}(X_{-2})$. Then we have the decomposition of ${\bb S}(X)$ as stated in the theorem.

It remains to compute ${\bb S}(X,\rel X_{-2})$. The coefficient ${\bb L}(\loc(X,\rel X_{-2}))$ of the homology depends on the location.
\begin{enumerate}
\item At $x\in X^0=X-X_{-1}$, the coefficient is ${\bb L}(D^p)={\bb L}(e)$, where $D^p$ is a ball neighborhood of $x$ in the manifold pure stratum $X^0$. 
\item At $x\in X^{-1}$, the coefficient is ${\bb L}(c{\bb C}P^r\times D^p)$, where $c{\bb C}P^r$ is the cone on the link of $X_{-1}$ in $X$, and $D^p$ is a ball neighborhood of $x$ in the manifold pure stratum $X^{-1}$. Since $r$ is even, the surgery obstruction ${\bb L}(c{\bb C}P^r\times D^p)$ is contractible by Proposition \ref{periodicity}. 
\item At $x\in X_{-2}$, we have $x\in X^{-i}$ for some $i\ge 2$. Let $L$ be the link of $X_{-i}$ in $X$, and let $D^p$ be a ball neighborhood of $x$ in the manifold pure stratum $X^{-i}$. Then the coefficient is 
\begin{align*}
{\bb L}(cL\times D^p,\rel cL_{-2}\times D^p)
&= {\bb L}(cL\times D^p-c\times D^p,\rel cL_{-2}\times D^p-c\times D^p) \\
&= {\bb L}(L\times [0,1]\times D^p,\rel L_{-2}\times [0,1]\times D^p) \\
&= \Omega^{p+1}{\bb L}(L, \rel L_{-2}). 
\end{align*}
We may apply Theorem \ref{generalsplit} to get ${\bb L}(L,\rel L_{-2})={\bb L}(\pi_1L^0,\pi_1L^{-1})$. By the third condition, the pure strata $L^0$ and $L^{-1}$ are connected and simply connected. Therefore the surgery obstruction spectrum is contractible.
\end{enumerate}
Thus the coefficient is the surgery obstruction spectrum ${\bb L}={\bb L}(e)$ on the top pure stratum $X^0=X-X_{-1}$ and is trivial on $X_{-1}$. Therefore the homology is
\[
{\bb H}(X;{\bb L}(\loc(X,\rel X_{-2})))
={\bb H}(X, X_{-1};{\bb L}).
\]
Moreover, Theorem \ref{generalsplit} tells us
\[
{\bb L}(X,\rel X_{-2})
={\bb L}(\pi_1X, \pi_1X_{-1}).
\]
Therefore the homotopy fibre of the assembly map is ${\bb S}^{\text{alg}}(X,X_{-1})$.

By excision, we have ${\bb H}(X, X_{-1};{\bb L})={\bb H}(\bar{X}^0, \pa \bar{X}^0;{\bb L})$. By Theorem \ref{generalsplit}, we also know ${\bb L}(\pi_1X, \pi_1X_{-1})={\bb L}(\pi_1\bar{X}^0, \pi_1\pa \bar{X}^0)$. Therefore the homotopy fibre of the assembly map is also the structure spectrum ${\bb S}(\bar{X}^0, \pa \bar{X}^0)$ of the manifold $(\bar{X}^0, \pa \bar{X}^0)$.
\end{proof}

We note that, in the setup of Theorem \ref{generalsplit2}, the restriction to $X_{-2}$ factors through $X_{-1}$. Then the fact that the restriction ${\bb S}(X)\to{\bb S}(X_{-2})$ is naturally split surjective implies that the restriction ${\bb S}(X_{-1})\to{\bb S}(X_{-2})$ is also naturally split surjective, and we get
\[
{\bb S}(X_{-1})={\bb S}(X_{-1},\rel X_{-2})\oplus {\bb S}(X_{-2}).
\]
Another way of looking at this is that, if a stratified space $X$ is the singular part of a stratified space $Y$ satisfying the conditions of Theorem \ref{generalsplit2}, i.e., $X=Y_{-1}$, then we have the natural splitting
\[
{\bb S}(X)={\bb S}(X,\rel X_{-1})\oplus {\bb S}(X_{-1}).
\]
The following computes ${\bb S}(X,\rel X_{-1})$ for the case relevant to multiaxial manifolds.

\begin{theorem}\label{generalsplit3}
Suppose $X=X_0\supset X_{-1}\supset X_{-2}\supset \dotsb$ is a homotopically stratified space, such that for any $i$, the top pure stratum of the link of $X_{-i}$ in $X$ are connected and simply connected. Then 
\[
{\bb S}(X,\rel X_{-1})
={\bb S}^{\text{\rm alg}}(X).
\]
\end{theorem}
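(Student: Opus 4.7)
The plan is to run the stratified surgery fibration
\[
{\bb S}(X, \rel X_{-1}) \to {\bb H}(X; {\bb L}(\loc(X, \rel X_{-1}))) \to {\bb L}(X, \rel X_{-1}),
\]
exactly as in the proof of Theorem \ref{generalsplit2}, and show that it reduces to the assembly fibration defining ${\bb S}^{\text{alg}}(X)$. The two things to verify are that the middle homology coefficient is (equivalent to) the constant spectrum ${\bb L}$, and that the right-hand total obstruction is ${\bb L}(\pi_1 X)$.

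For the coefficient computation, at $x \in X^0 = X - X_{-1}$ the stalk is ${\bb L}(D^p) = {\bb L}(e) = {\bb L}$, and at $x \in X^{-i}$ with $i \ge 1$ the local model is $cL \times D^p$ for $L$ the link of $X^{-i}$ in $X$, with the ``rel $X_{-1}$'' condition fixing the part $cL_{-1} \times D^p$. By the cone manipulation used in the proof of Theorem \ref{generalsplit2}, this stalk equals $\Omega^{p+1} {\bb L}(L, \rel L_{-1})$. Let $\bar L^0$ denote the complement of a regular neighborhood of $L_{-1}$ in $L$, which is a (homology) manifold with boundary; then ${\bb L}(L, \rel L_{-1}) = {\bb L}(\bar L^0, \pa \bar L^0) = {\bb L}(\pi_1 \bar L^0)$. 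The hypothesis that the top pure stratum $L^0$ is connected and simply connected gives $\pi_1 \bar L^0 = 1$, so ${\bb L}(L, \rel L_{-1}) = {\bb L}(e) = {\bb L}$ and the stalk at singular points agrees with the stalk at generic points. Consequently the middle term is the ordinary homology ${\bb H}(X; {\bb L})$.

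For the total obstruction, let $W = \bar X^0$ be the complement of a regular neighborhood of $X_{-1}$ in $X$, a homology manifold with boundary $\pa W$. Then ${\bb L}(X, \rel X_{-1}) = {\bb L}(W, \pa W) = {\bb L}(\pi_1 W)$. Applying Proposition \ref{fund1} with $Y = X_{-1}$ (the hypothesis gives exactly the required simple connectivity of the pure strata of links not contained in $Y$, since these are precisely the top pure strata $L^0$ of the various links), we obtain $\pi_1 W = \pi_1(X - X_{-1}) = \pi_1 X$, and therefore ${\bb L}(X, \rel X_{-1}) = {\bb L}(\pi_1 X)$. The surgery fibration becomes
\[
{\bb S}(X, \rel X_{-1}) \to {\bb H}(X; {\bb L}) \to {\bb L}(\pi_1 X),
\]
whose homotopy fibre is by definition ${\bb S}^{\text{alg}}(X)$.

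The hard part will be the stalk computation at singular points: correctly setting up the stratified relative surgery obstruction of a cone $(cL, cL_{-1})$, reducing it to the ordinary surgery obstruction of the top-stratum manifold $(\bar L^0, \pa \bar L^0)$, and checking that only the simple connectivity of the top pure stratum $L^0$ (and not of deeper pure strata of links inside $L$) is needed for the identification ${\bb L}(L, \rel L_{-1}) = {\bb L}$. Once that single reduction is secured, every other ingredient is a direct adaptation of the proof of Theorem \ref{generalsplit2} with $X_{-1}$ playing the role of $X_{-2}$.
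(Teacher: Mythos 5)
Your argument is correct and is essentially the paper's own proof: the paper likewise runs the stratified surgery fibration for ${\bb S}(X,\rel X_{-1})$, observes that the simple connectivity of the top pure strata of links makes the local coefficient the constant spectrum ${\bb L}$, and invokes Proposition \ref{fund1} to identify ${\bb L}(X,\rel X_{-1})$ with ${\bb L}(\pi_1X)$, so the fibre is ${\bb S}^{\text{alg}}(X)$. The ``hard part'' you flag (that only the top pure stratum $L^0$ of each link is needed to get ${\bb L}(L,\rel L_{-1})={\bb L}(\pi_1\bar L^0)={\bb L}(e)$) resolves exactly as you expect, since surgery rel the lower strata reduces to surgery on $(\bar L^0,\pa\bar L^0)$ rel boundary.
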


\begin{proof}
Similar to the proof of Theorem \ref{generalsplit2}, the simple connectivity assumption implies that the structure set ${\bb S}(X,\rel X_{-1})$ is the homotopy fibre of the assembly map 
\[
{\bb H}(X;{\bb L}(\loc(X,\rel X_{-1})))\to {\bb L}(X,\rel X_{-1}),
\]
and the coefficient ${\bb L}(\loc(X,\rel X_{-1}))={\bb L}$. We also get $\pi_1(X-X_{-1})=\pi_1X$ from Proposition \ref{fund1}. Therefore the assembly map is ${\bb H}(X;{\bb L})\to {\bb L}(\pi_1X)$, and the homotopy fibre is ${\bb S}^{\text{\rm alg}}(X)$.
\end{proof}

\section{Structure Sets of Multiaxial Actions}
\label{structure}

Let $M$ be a multiaxial $U(n)$-manifold. By Lemma \ref{fund}, the pure strata of links in the orbit space $X=M/U(n)$ are all connected and simply connected. To apply the theorems of Section \ref{obstruction} to $X$, we also need to know the orientability of the link fibration. Since the monodromy map on the fibre ${\bb C}P^r$ comes from the $S^1$-equivariant monodromy map on the homotopy link sphere between the adjacent strata, the monodromy map must be homotopic to the identity. Therefore the link fibration has trivial monodromy and is in particular orientable.

Recall the concept of the first gap defined after the statement of Lemma \ref{link}. The number $r=r_1^x$ depends only on the connected component of the singular part $X_{-1}$. For any connected component $X_{-1}^x$, the number is characterized as the link of $X_{-1}^x$ in $X$ being homotopy to ${\bb C}P^r$. The number is also characterized by the equality $\dim M^{U(j-1),x}-\dim M^{U(j),x}=2(r+n)$.

It is easy to see that Theorem \ref{generalsplit2} remains true in case $X_{-1}$ has several connected components, and perhaps with different ${\bb C}P^r$ for different components, as long as all $r$ are even. Therefore if all the first gaps of a multiaxial $U(n)$-manifold $M$ are even, then we have natural splitting
\[
{\bb S}_{U(n)}(M)
={\bb S}_{U(n)}(M,\rel M_{-2})\oplus {\bb S}_{U(n)}(M_{-2}).
\]

By the computation in Theorem \ref{generalsplit2}, we have
\[
{\bb S}_{U(n)}(M,\rel M_{-2})
={\bb S}(\bar{X}^0,\pa \bar{X}^0)
={\bb S}^{\text{\rm alg}}(X,X_{-1}).
\]
By deleting an equivariant regular neighborhood of $M_{-1}=U(n)\times_{U(n-1)}M^{U(1)}$ from $M$, we get a free $U(n)$-manifold with boundary $(\bar{M}^0,\pa\bar{M}^0)$, and
\[
{\bb S}(\bar{X}^0,\pa \bar{X}^0)
={\bb S}_{U(n)}(\bar{M}^0,\pa\bar{M}^0).
\]

On the other hand, by Lemma \ref{hereditary}, we have ${\bb S}_{U(n)}(M_{-2})={\bb S}_{U(n-2)}(M^{U(2)})$, where $M^{U(2)}$ is a multiaxial $U(n-2)$-manifold. Moreover, Lemma \ref{link} further tells us that, for $x\in M^{U(i)}$, $i>2$, the first gap of $x$ in $M^{U(2)}$ is $r_3^x=r_1^x+2$, where $r_1^x$ is the first gap of $x$ in $M$. This can also be seen from 
\begin{align*}
\dim (M^{U(2)})^{U(j-1),x}-\dim (M^{U(2)})^{U(j),x}
&=\dim M^{U(j-3),x}-\dim M^{U(j-2),x} \\
&=2(r_1^x+n)=2(r_3^x+(n-2)),
\end{align*}
where we use $n-2$ on the right because $M^{U(2)}$ is a multiaxial $U(n-2)$-manifold. The upshot of this is that all the first gaps of $M^{U(2)}$ remain even, and we have further natural splitting
\[
{\bb S}_{U(n)}(M_{-2})
={\bb S}_{U(n-2)}(M^{U(2)}) 
={\bb S}_{U(n-2)}(M^{U(2)},\rel M^{U(2)}_{-2})\oplus {\bb S}_{U(n-2)}(M^{U(2)}_{-2}).
\]
Moreover, we have
\[
{\bb S}_{U(n-2)}(M^{U(2)},\rel M^{U(2)}_{-2})
={\bb S}_{U(n-2)}(\bar{M}^{U(2)},\pa\bar{M}^{U(2)})
={\bb S}^{\text{\rm alg}}(X_{-2},X_{-3}),
\]
and 
\[
{\bb S}_{U(n-2)}(M^{U(2)}_{-2})={\bb S}_{U(n-4)}(M^{U(4)}).
\]

The splitting continues and gives us the general version of part 1 of Theorem \ref{mainth1} in the introduction. The mod $4$ condition on the codimensions is a rephrasement of the even first gap.

\begin{theorem}\label{ssplit}
Suppose $M$ is a multiaxial $U(n)$-manifold, such that the connected components of $M^{U(1)}$ have codimensions $2n$ mod $4$. Then we have natural splitting
\[
{\bb S}_{U(n)}(M)
=\oplus_{i\ge 0}{\bb S}_{U(n-2i)}(\bar{M}^{U(2i)},\pa\bar{M}^{U(2i)})
=\oplus_{i\ge 0}{\bb S}^{\text{\rm alg}}(X_{-2i},X_{-2i-1}).
\]
\end{theorem}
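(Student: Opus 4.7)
The plan is to derive Theorem \ref{ssplit} by iterating Theorem \ref{generalsplit2} on the orbit space $X=M/U(n)$, using Lemma \ref{hereditary} at each step to reinterpret the remaining piece as the structure set of a smaller multiaxial manifold. First I would verify that $X$ satisfies the three hypotheses of Theorem \ref{generalsplit2}. Connectivity and simple connectivity of the pure strata of all links in $X$ follow from Lemma \ref{fund} together with the description of the links of multiaxial actions given in Lemma \ref{link} (whose pure strata are homotopy equivalent to Grassmannians). Orientability of the link fibration of $X^{-1}$ in $X$ holds because the fibrewise monodromy on ${\bb C}P^r$ is induced from an $S^1$-equivariant self-map of the link sphere between the two adjacent strata of $M$, hence is homotopic to the identity. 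The first-gap formula $\dim M^{U(j-1),x}-\dim M^{U(j),x}=2(r_1^x+n)$ from Lemma \ref{link} translates the hypothesis ``codimension $\equiv 2n\pmod 4$ for every component of $M^{U(1)}$'' into ``$r_1^x$ is even for every component of $X_{-1}$,'' which is exactly the even-$r$ condition in Theorem \ref{generalsplit2}. (A brief remark should note that the multi-component form of Theorem \ref{generalsplit2}, with possibly different even $r$ on different components of $X_{-1}$, goes through verbatim by working component by component.)

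Next I would apply Theorem \ref{generalsplit2} to get the initial natural splitting
\[
{\bb S}_{U(n)}(M)
={\bb S}_{U(n)}(M,\rel M_{-2})\oplus{\bb S}_{U(n)}(M_{-2}),
\]
and identify the first summand as ${\bb S}(\bar X^0,\pa\bar X^0)={\bb S}^{\text{\rm alg}}(X,X_{-1})$. Passing to equivariant language, deleting an equivariant regular neighbourhood of $M_{-1}=U(n)\times_{U(n-1)}M^{U(1)}$ from $M$ yields the free $U(n)$-manifold with boundary $(\bar M^0,\pa\bar M^0)$, so the first summand is naturally ${\bb S}_{U(n)}(\bar M^0,\pa\bar M^0)$. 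This gives the $i=0$ term in both formulations in the theorem.

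For the second summand I would invoke Lemma \ref{hereditary} to write ${\bb S}_{U(n)}(M_{-2})={\bb S}_{U(n-2)}(M^{U(2)})$, so the iteration reduces to a multiaxial $U(n-2)$-manifold. To continue, I must check that the parity hypothesis is inherited: for $x\in M^{U(j),x}$ with $j\ge 3$, using Lemma \ref{link} twice,
\[
\dim(M^{U(2)})^{U(j-3),x}-\dim(M^{U(2)})^{U(j-2),x}
=\dim M^{U(j-1),x}-\dim M^{U(j),x}=2(r_1^x+n)=2\bigl(r_3^x+(n-2)\bigr),
\]
so the first gaps of $M^{U(2)}$ (as a $U(n-2)$-manifold) are obtained from the old ones by adding $2$, hence remain even. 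Therefore $M^{U(2)}$ satisfies the same codimension hypothesis relative to $n-2$, and Theorem \ref{generalsplit2} applies to $M^{U(2)}$.

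Induction on $n$ then yields the full decomposition: at step $i$ we apply Theorem \ref{generalsplit2} to $M^{U(2i)}$ to peel off ${\bb S}_{U(n-2i)}(\bar M^{U(2i)},\pa\bar M^{U(2i)})={\bb S}^{\text{\rm alg}}(X_{-2i},X_{-2i-1})$, and use Lemma \ref{hereditary} to pass from the residual stratum $M^{U(2i)}_{-2}$ to $M^{U(2i+2)}$. The process terminates when $M^{U(2i)}$ is empty (equivalently, $2i>n$), giving the advertised direct sum. The most delicate bookkeeping step is tracking that the parity condition on first gaps is preserved under the reduction $M\mapsto M^{U(2)}$; once that is verified via the dimension formula above, the iteration is mechanical and the main structural content is entirely carried by Theorem \ref{generalsplit2}.
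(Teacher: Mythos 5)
Your proposal is correct and follows essentially the same route as the paper: verify the hypotheses of Theorem \ref{generalsplit2} (simple connectivity of pure strata of links via Lemmas \ref{fund} and \ref{link}, orientability via the $S^1$-equivariant monodromy, evenness of the first gaps from the codimension hypothesis), split off ${\bb S}^{\text{\rm alg}}(X,X_{-1})={\bb S}_{U(n)}(\bar M^0,\pa\bar M^0)$, pass to $M^{U(2)}$ by Lemma \ref{hereditary}, check that first gaps increase by $2$ and so stay even, and iterate. This matches the paper's argument in Section \ref{structure}, including the key parity-inheritance computation.
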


In general, a multiaxial manifold may have even as well as odd first gaps. Denote by $M^{U(1)}_{\text{even}}$ the union of the connected components of $M^{U(1)}$ of dimension $\dim M-2n$ mod $4$. Denote by $M^{U(1)}_{\text{odd}}$ the union of the connected components of $M^{U(1)}$ of dimension $\dim M-2(n+1)$ mod $4$. Then we have
\[
M^{U(i)}=M^{U(i)}_{\text{even}}\cup M^{U(i)}_{\text{odd}},\quad
M^{U(i)}_{\text{even}}=M^{U(i)}\cap M^{U(1)}_{\text{even}},\quad
M^{U(i)}_{\text{odd}}=M^{U(i)}\cap M^{U(1)}_{\text{odd}},
\]
such that the components in $M^{U(i)}_{\text{even}}$ have even first gaps, and the components in $M^{U(i)}_{\text{odd}}$ have odd first gaps. This leads to
\[
M_{-i,\text{even}}=U(n)\times_{U(n-i)}M^{U(i)}_{\text{even}},\quad
M_{-i,\text{odd}}=U(n)\times_{U(n-i)}M^{U(i)}_{\text{odd}}.
\]
We also have the corresponding decompositions 
\[
X_{-i}=X_{-i,\text{even}}\cup X_{-i,\text{odd}},\quad
\bar{M}^{U(i)}=\bar{M}^{U(i)}_{\text{even}}\cup \bar{M}^{U(i)}_{\text{odd}}.
\]

By the same proof as Theorem \ref{ssplit}, we get the same natural splitting for those with even first gaps
\[
{\bb S}_{U(n)}(M)
={\bb S}^{\text{alg}}(X,\rel X_{-2,\text{even}})\oplus {\bb S}^{\text{alg}}(X_{-2,\text{even}})
\]
Here the multiaxial $U(n-2)$-manifold $M^{U(2)}_{\text{even}}$ satisfies the condition of Theorem \ref{ssplit}, so that the second summand can be further split
\[
{\bb S}^{\text{alg}}(X_{-2,\text{even}})
=\oplus_{i\ge 1}{\bb S}^{\text{alg}}(X_{-2i,\text{even}},X_{-2i-1,\text{even}}).
\]
In terms of the multiaxial manifold, this splitting is
\[
{\bb S}_{U(n-2)}(M^{U(2)}_{\text{even}})
=\oplus_{i\ge 1}{\bb S}_{U(n-2i)}(\bar{M}^{U(2i)}_{\text{even}},\pa\bar{M}^{U(2i)}_{\text{even}}).
\]

On the other hand, the first summand 
\[
{\bb S}^{\text{alg}}(X,\rel X_{-2,\text{even}})
={\bb S}_{U(n)}(M,\rel M_{-2,\text{even}}).
\]
Let $N_{\text{even}}$ and $N_{\text{odd}}$ be equivariant neighborhoods of $M_{-1,\text{even}}$ and $M_{-1,\text{odd}}$. Then by the same proof as Theorem \ref{ssplit}, we have
\[
{\bb S}_{U(n)}(M,\rel M_{-2,\text{even}})
={\bb S}_{U(n)}(\overline{M-N_{\text{even}}},\pa N_{\text{even}}).
\]
Combining everything, we get the following decomposition.

\begin{theorem}\label{ssplit3}
Suppose $M$ is a multiaxial $U(n)$-manifold. Then we have natural splitting
\[
{\bb S}_{U(n)}(M)
={\bb S}^{\text{\rm alg}}(X,\rel X_{-2,\text{\rm even}})\oplus \left(\oplus_{i\ge 1}{\bb S}^{\text{\rm alg}}(X_{-2i,\text{\rm even}},X_{-2i-1,\text{\rm even}})\right).
\]
Moreover,
\[
{\bb S}^{\text{\rm alg}}(X,\rel X_{-2,\text{\rm even}})
={\bb S}_{U(n)}(\overline{M-N_{\text{\rm even}}},\pa N_{\text{\rm even}})
\]
and
\[
{\bb S}^{\text{\rm alg}}(X_{-2i,\text{\rm even}},X_{-2i-1,\text{\rm even}})
={\bb S}_{U(n-2i)}(\bar{M}^{U(2i)}_{\text{\rm even}},\pa\bar{M}^{U(2i)}_{\text{\rm even}}).
\]
\end{theorem}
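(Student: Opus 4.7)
The plan is to iterate the decomposition argument of Theorem \ref{ssplit}, peeling off only the even-first-gap portion of the singular stratum at each stage, since Theorem \ref{generalsplit2} (through Proposition \ref{periodicity}) requires the codimension-one link to be ${\bb C}P^r$ with $r$ even. First I would establish a mild extension of Theorem \ref{generalsplit2} that permits $X_{-1}$ to be a disjoint union of components with different link fibres, provided the peel-off is carried out only over those components with even $r$. Since $M^{U(1)} = M^{U(1)}_{\text{even}}\cup M^{U(1)}_{\text{odd}}$ is a disjoint union, the two parts admit disjoint equivariant tubular neighborhoods $N_{\text{even}}$ and $N_{\text{odd}}$, and the proof of Theorem \ref{generalsplit2} carries over verbatim along $X_{-1,\text{even}}$: the link-transfer argument of Proposition \ref{periodicity} kills the surgery obstruction spectrum of the boundary only on the even side. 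The orientability hypothesis is automatic, as noted at the start of Section \ref{structure}, because the $S^1$-equivariant monodromy on the link sphere is homotopic to the identity.

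Applying this extension to $X=M/U(n)$ yields the first splitting
\[
{\bb S}_{U(n)}(M) = {\bb S}^{\text{alg}}(X,\rel X_{-2,\text{even}}) \oplus {\bb S}^{\text{alg}}(X_{-2,\text{even}}).
\]
For the second summand, Lemma \ref{hereditary} identifies it with ${\bb S}_{U(n-2)}(M^{U(2)}_{\text{even}})$, and by Lemma \ref{link} each first gap of the multiaxial $U(n-2)$-manifold $M^{U(2)}_{\text{even}}$ equals $r_1+2$, hence is again even. Therefore Theorem \ref{ssplit} applies directly to $M^{U(2)}_{\text{even}}$ and produces
\[
{\bb S}^{\text{alg}}(X_{-2,\text{even}}) = \bigoplus_{i\ge 1} {\bb S}^{\text{alg}}(X_{-2i,\text{even}},X_{-2i-1,\text{even}}),
\]
together with the identifications ${\bb S}^{\text{alg}}(X_{-2i,\text{even}},X_{-2i-1,\text{even}}) = {\bb S}_{U(n-2i)}(\bar{M}^{U(2i)}_{\text{even}},\pa\bar{M}^{U(2i)}_{\text{even}})$, coming from the manifold-with-boundary presentation of each stage (delete the equivariant regular neighborhood of the lower singular stratum inside $M^{U(2i)}_{\text{even}}$ and apply the computation of the ``rel'' term in Theorem \ref{generalsplit2}).

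For the remaining identification of the first summand, the same manifold-with-boundary construction from Theorem \ref{generalsplit2}, applied by deleting $N_{\text{even}}$ rather than a neighborhood of the full $M_{-1}$, gives
\[
{\bb S}^{\text{alg}}(X,\rel X_{-2,\text{even}}) = {\bb S}_{U(n)}(\overline{M - N_{\text{even}}},\pa N_{\text{even}}).
\]
The main obstacle is the first step, namely verifying that the presence of the odd-gap components $X_{-1,\text{odd}}$ (and the strata above them) does not disturb the periodicity-driven collapse used to split off $X_{-2,\text{even}}$. The key observation is that the even and odd parts are already geometrically separated, so the transfer argument over each connected component of $X_{-1,\text{even}}$ proceeds independently, and the odd components simply stay inside the ``rel $X_{-2,\text{even}}$'' summand without any further attempt to split them; any attempt would fail precisely because Proposition \ref{periodicity} does not hold for odd $r$.
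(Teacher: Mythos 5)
Your proposal is correct and follows essentially the same route as the paper: peel off only the even-first-gap part of the singular set using the (componentwise) extension of Theorem \ref{generalsplit2}, identify the split-off summand with ${\bb S}_{U(n-2)}(M^{U(2)}_{\text{even}})$ via Lemma \ref{hereditary}, note via Lemma \ref{link} that its first gaps remain even so that Theorem \ref{ssplit} applies, and identify the relative summand with ${\bb S}_{U(n)}(\overline{M-N_{\text{even}}},\pa N_{\text{even}})$. The paper is terser (it simply invokes ``the same proof as Theorem \ref{ssplit}''), and your observation that the geometric disjointness of the even and odd components is what lets the transfer/periodicity collapse run independently over $X_{-1,\text{even}}$ is exactly the point being relied upon.
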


In the theorem above, $U(n-2i)$ acts freely on $\bar{M}^{U(2i)}_{\text{even}}$, and the structure set is about the ordinary manifold $\bar{M}^{U(2i)}_{\text{even}}/U(n-2i)$. 

In the first summand $S_{U(n)}(\overline{M-N_{\text{even}}},\pa N_{\text{\rm even}})$, all the gaps in the multiaxial $U(n)$-manifold $\overline{M-N_{\text{even}}}$ are odd. This leads to the study of multiaxial $U(n)$-manifolds $M$, such that all first gaps are odd. We may use the idea presented before Theorem \ref{generalsplit3}. Suppose $M=W^{U(1)}$ for a multiaxial $U(n+1)$-manifold $W$. Let $Y=W/U(n+1)$ be the orbit space of $W$. Then $X_{-i}=Y_{-i-1}$. By Lemma \ref{link}, for any $x\in X_{-1}=Y_{-2}$, the first gap of $x$ in $Y$ is one less than the first gap of $x$ in $X$. Therefore the first gap of $x$ in $Y$ is even, and the natural splitting of ${\bb S}(Y)$ induces the natural splitting
\[
{\bb S}(X)
={\bb S}(X,\rel X_{-1})\oplus {\bb S}(X_{-1}).
\]
Since the first gap in the $U(n-1)$-manifold $M^{U(1)}$ is one more than the first gap in $M$ and is therefore also even, we may apply Theorem \ref{ssplit} to get further natural splitting
\[
{\bb S}(X_{-1})
=\oplus_{i\ge 1}{\bb S}^{\text{\rm alg}}(X_{-2i+1},X_{-2i}).
\]
On the other hand, by the computation in Theorem \ref{generalsplit3}, the first summand is
\[
{\bb S}(X,\rel X_{-1})
={\bb S}^{\text{\rm alg}}(X).
\]
Then we get the general version of part 2 of Theorem \ref{mainth1} in the introduction.

\begin{theorem}\label{ssplit2}
Suppose $M$ is a multiaxial $U(n)$-manifold, such that such that the connected components of $M^{U(1)}$ have codimensions $2(n+1)$ mod $4$. If $M=W^{U(1)}$ for a multiaxial $U(n+1)$-manifold $W$, then we have natural splitting
\[
{\bb S}_{U(n)}(M)
={\bb S}^{\text{\rm alg}}(X)\oplus\left(\oplus_{i\ge 1}{\bb S}^{\text{\rm alg}}(X_{-2i+1},X_{-2i})\right).
\]
Moreover,
\[
{\bb S}^{\text{\rm alg}}(X_{-2i+1},X_{-2i})
={\bb S}_{U(n-2i+1)}(\bar{M}^{U(2i-1)},\pa\bar{M}^{U(2i-1)}).
\]
\end{theorem}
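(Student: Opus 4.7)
The plan is to reduce the odd-first-gap situation to the even-first-gap framework by exploiting the extension $M=W^{U(1)}$. Set $Y=W/U(n+1)$. By Lemma~\ref{hereditary} the orbit space $X=M/U(n)$ is canonically identified with the closed stratum $Y_{-1}\subset Y$, and more generally $X_{-i}=Y_{-i-1}$. Lemma~\ref{link} then shows that at any $x\in X_{-1}=Y_{-2}$ the first gap measured in $Y$ is one less than the first gap measured in $X$; since the first gaps of $X$ are odd by hypothesis, those of $Y$ along $Y_{-2}$ are even, so $Y$ satisfies the codimension hypothesis of Theorem~\ref{ssplit}.

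Next, I would invoke the natural splitting that is the content of the remark following Theorem~\ref{generalsplit2}: when a stratified space $X$ arises as the singular part $Y_{-1}$ of a larger stratified space $Y$ satisfying the hypotheses of that theorem, one has a natural splitting
\[
{\bb S}(X) = {\bb S}(X,\rel X_{-1})\oplus {\bb S}(X_{-1}).
\]
Applied to our $Y$ and $X=Y_{-1}$, with the link orientability coming from triviality of the $S^1$-monodromy on the homotopy link spheres, as discussed at the start of Section~\ref{structure}, this produces the first splitting of ${\bb S}_{U(n)}(M)$. This is the only step that uses the extendability hypothesis $M=W^{U(1)}$ in an essential way: without $W$ the first gaps of $X$ itself are odd, and Theorem~\ref{generalsplit2} cannot be applied to $X$ directly.

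Finally I would identify the two summands separately. For the first, Theorem~\ref{generalsplit3} gives ${\bb S}(X,\rel X_{-1})={\bb S}^{\text{alg}}(X)$, the required connectivity and simple connectivity of pure strata of links being provided by Lemmas~\ref{link} and~\ref{fund}. For the second, Lemma~\ref{hereditary} gives ${\bb S}(X_{-1})={\bb S}_{U(n-1)}(M^{U(1)})$. Now $M^{U(1)}$ is itself a multiaxial $U(n-1)$-manifold whose first gaps, by Lemma~\ref{link}, are one larger than those of $M$ and hence even. Therefore Theorem~\ref{ssplit} applies directly to $M^{U(1)}$ and furnishes
\[
{\bb S}_{U(n-1)}(M^{U(1)}) = \oplus_{i\ge 1}{\bb S}_{U(n-2i+1)}(\bar{M}^{U(2i-1)},\pa\bar{M}^{U(2i-1)}) = \oplus_{i\ge 1}{\bb S}^{\text{alg}}(X_{-2i+1},X_{-2i}),
\]
after reindexing via $(X_{-1})_{-(2i-2)}=X_{-2i+1}$ and $(X_{-1})_{-(2i-1)}=X_{-2i}$.

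The step I expect to be the main obstacle is verifying the naturality of the singular-stratum splitting in the second paragraph, so that it can be combined cleanly with the subsequent identifications via Theorems~\ref{generalsplit3} and~\ref{ssplit}. Once naturality is in hand, the rest of the argument is a direct bookkeeping exercise with the machinery already developed in Section~\ref{obstruction} and the preceding discussion of the present section, together with the parity computations of Lemma~\ref{link}.
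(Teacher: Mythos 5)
Your proposal is correct and follows essentially the same route as the paper: pass to the orbit space $Y=W/U(n+1)$, use Lemma \ref{link} to see that the first gaps of $Y$ along $Y_{-2}$ are even, invoke the singular-part splitting ${\bb S}(X)={\bb S}(X,\rel X_{-1})\oplus{\bb S}(X_{-1})$ from the remark following Theorem \ref{generalsplit2}, and then identify the two summands via Theorem \ref{generalsplit3} and Theorem \ref{ssplit} applied to $M^{U(1)}$. The naturality point you flag as the main obstacle is exactly what that remark settles (split surjectivity of ${\bb S}(Y)\to{\bb S}(Y_{-2})$ factoring through ${\bb S}(Y_{-1})$), so nothing further is needed.
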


We remark that, if $M=W^{U(1)}$ and $M$ is connected, then there is only one first gap $r$ in $M$, uniquely determined by
\[
\dim W-\dim M=2(r+n+1).
\]
In case $r$ is odd, there is actually no $M^{U(1)}_{\text{even}}$.

Theorem \ref{mainth2} in the introduction gives another case that ${\bb S}(X_{-1})$ splits off from ${\bb S}(X)$ under the assumption that all the first gaps are odd (but not necessarily equal). The theorem deals with semi-free $S^1$-manifolds, which are the same as multiaxial $U(1)$-manifolds.

\begin{proof}[Proof of Theorem \ref{mainth2}]
The codimensions of $M^{S^1}_0$ and $M^{S^1}_2$ mean that their first gaps are respectively odd and even. Let $N_0$ and $N_2$ be their respective equivariant neighborhoods. Then Theorem \ref{ssplit3} implies
\[
{\bb S}_{S^1}(M)={\bb S}_{S^1}(\overline{M-N_2},\pa N_2).
\]
Next we want to split off the structure set of the fixed point $\overline{M-N_2}^{S^1}=M^{S^1}_0$. 

As we argued at the beginning of the section, the link fibration of $M^{S^1}_0/S^1=M^{S^1}_0$ in $M/S^1$ has trivial monodromy. Moreover, the fibre of this link fibration is homotopy equivalent to ${\bb C}P^r$ for odd (first gap) $r$. By \cite{lr,morgan}, we know that crossing with ${\bb C}P^r$ kills the surgery obstruction. Then the homotopy replacement argument in \cite{cw,cwy} can be applied to show that the natural map 
\[
{\bb S}_{S^1}(\overline{M-N_2},\pa N_2)\to {\bb S}_{S^1}(\overline{M-N_2}^{S^1})={\bb S}_{S^1}(M^{S^1}_0)
\]
is split surjective. Since ${\bb S}_{S^1}(\overline{M-N_2},\pa N_2,\rel \pa N_0)$ is the kernel of the natural map, the theorem is proved.
\end{proof}

\section{Structure Set of Multiaxial Representation Sphere}
\label{repsphere}

Let $\rho_n$ be the defining representation of $U(n)$. Let $\epsilon$ be the real $1$-dimensional trivial representation. Then for any natural number $k$, the unit sphere 
\[
M=S(k\rho_n\oplus j\epsilon)
=S(k\rho_n)*S^{j-1}
\]
of the representation $k\rho_n\oplus j\epsilon$ is a multiaxial $U(n)$-manifold. In this section, we compute the structure set of this representation sphere.

If $k<n$, then $M=U(n)\times_{U(k)}S(k\rho_k\oplus j\epsilon)$, and the problem is reduced to the $U(k)$-representation sphere $S(k\rho_k\oplus j\epsilon)$. Without loss of generality, therefore, we will always assume $k\ge n$ in the subsequent discussion.

The fixed point subsets are
\[
M^{U(i)}=S(k\rho_n^{U(i)}\oplus j\epsilon)
=S(k\rho_{n-i}\oplus j\epsilon)
=S(k\rho_{n-i})*S^{j-1}.
\]
We have
\[
\dim M^{U(i)}=2k(n-i)-1+j,\quad
\dim M^{U(i-1)}-\dim M^{U(i)}=2k.
\]
Therefore the first gap is $k-n$. If $k-n$ is even, then we can use Theorem \ref{ssplit} to compute the structure set. If $k-n$ is odd, then we may use $M=S(k\rho_{n+1}\oplus j\epsilon)^{U(1)}$, where $k-(n+1)$ is even, so that Theorem \ref{ssplit2} can be applied.

We first assume $k-n$ is even and compute the top summand $S^{\text{alg}}(X,X_{-1})$ in the decomposition for $S(X)=S_{U(n)}(S(k\rho_{n}\oplus j\epsilon))$ in Theorem \ref{ssplit}. Since the representation sphere is the link of the origin in the representation space $k\rho_n\oplus j\epsilon={\bb C}^{kn}\oplus{\bb R}^j$, by Lemma \ref{fund}, both $X$ and $X_{-1}$ are connected and simply connected. If the action is neither trivial nor free, then we have $X_{-1}\ne\emptyset$, and the surgery obstruction ${\bb L}(\pi_1X,\pi_1X_{-1})={\bb L}(e,e)$ is trivial. Therefore the top summand is the same as the homology
\[
{\bb S}^{\text{alg}}(X,X_{-1})={\bb H}(X,X_{-1};{\bb L}).
\]

Let 
\[
Z=S(k\rho_n)/U(n),\quad
d=\dim Z=2kn-1-n^2.
\]
Then
\[
(X,X_{-1})=(Z,Z_{-1})*S^{j-1},\quad
\dim X=d+j,
\]
and
\[
S^{\text{alg}}(X,X_{-1})
=\pi_{d+j}{\bb S}^{\text{alg}}(X,X_{-1}) 
=\pi_{d+j}{\bb H}(X,X_{-1};{\bb L}) 
=H_d(Z,Z_{-1};{\bb L}).
\]

\begin{proposition}\label{compute}
If $k\ge n$, then for $Z=S(k\rho_n)/U(n)$, we have
\[
H_{\dim Z}(Z,Z_{-1};{\bb L})
={\bb Z}^{A_{n,k}}\oplus {\bb Z}_2^{B_{n,k}},
\]
where $A_{n,k}$ is the number of $n$-tuples $(\mu_1,\dotsc,\mu_n)$ satisfying
\[
0\le \mu_1\le \dotsb \le \mu_n\le k-n,\quad
\sum\mu_i\text{ is even},
\]
and $B_{n,k}$ is the number of $n$-tuples satisfying
\[
0\le \mu_1\le \dotsb \le \mu_n\le k-n,\quad
\sum\mu_i\text{ is odd}.
\]
\end{proposition}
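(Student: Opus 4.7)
The plan is to compute $H_d(Z, Z_{-1}; \mathbb{L})$ by identifying the top pure stratum $Z - Z_{-1}$ up to homotopy with the complex Grassmannian $G(n, k)$, and then applying Poincar\'e duality together with the Atiyah--Hirzebruch spectral sequence.

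First I would establish the homotopy equivalence $Z - Z_{-1} \simeq G(n,k)$. Viewing $S(k\rho_n)$ as the space of $n \times k$ complex matrices $A$ of Frobenius norm $1$ with $U(n)$ acting by left multiplication, the free locus is precisely the set of rank-$n$ matrices, so $Z - Z_{-1}$ is the quotient of the rank-$n$ locus. The $U(n)$-invariant assignment $A \mapsto \operatorname{rowspan}(A) \in G(n,k)$ descends to a map $Z - Z_{-1} \to G(n,k)$ whose fiber over a subspace $V$ is the space of trace-$1$ positive-definite Hermitian forms on $V \cong \mathbb{C}^n$ (realized as $A^*A|_V$); this fiber is an open convex subset of a real $(n^2 - 1)$-dimensional affine hyperplane, hence contractible. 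Thus $Z - Z_{-1}$ is an open oriented topological $d$-manifold of dimension $d = 2n(k - n) + (n^2 - 1) = 2nk - 1 - n^2$, homotopy equivalent to $G(n,k)$.

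Next, excision identifies $H_d(Z, Z_{-1}; \mathbb{L})$ with $H_d^{\mathrm{lf}}(Z - Z_{-1}; \mathbb{L})$, and Poincar\'e duality for $\mathbb{L}$-cohomology of the oriented open $d$-manifold gives
\[
H_d^{\mathrm{lf}}(Z - Z_{-1}; \mathbb{L}) \cong H^0(Z - Z_{-1}; \mathbb{L}) \cong H^0(G(n,k); \mathbb{L}).
\]
Since $G(n,k)$ has a Schubert CW decomposition with cells only in even real dimensions and $\pi_{i}\mathbb{L} = 0$ for $i$ odd, the Atiyah--Hirzebruch spectral sequence $E_2^{p,q} = H^p(G(n,k); \pi_{-q}\mathbb{L}) \Rightarrow H^{p+q}(G(n,k); \mathbb{L})$ has nonzero $E_2$-terms only when both $p$ and $q$ are even, and bidegree considerations force every differential to vanish. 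On the diagonal $p + q = 0$, each Schubert cell indexed by a partition $\mu$ with $0 \le \mu_1 \le \dots \le \mu_n \le k - n$ contributes $\pi_{2|\mu|}\mathbb{L}$, which is $\mathbb{Z}$ if $|\mu|$ is even and $\mathbb{Z}/2$ if $|\mu|$ is odd.

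The main technical point is to show that the resulting filtration of $H^0(G(n,k); \mathbb{L})$ has split extensions, so that it is literally isomorphic to $\mathbb{Z}^{A_{n,k}} \oplus (\mathbb{Z}/2)^{B_{n,k}}$ rather than having some $\mathbb{Z}/4$ summands or nontrivial $\mathbb{Z}$-by-$\mathbb{Z}/2$ extensions of the form $\mathbb{Z} \xrightarrow{2} \mathbb{Z} \to \mathbb{Z}/2$. At odd primes this is automatic from Sullivan's splitting of $\mathbb{L}[1/2]$ as a product of Eilenberg--MacLane spectra; $2$-locally it reduces to verifying that the relevant $k$-invariants of $\mathbb{L}_{(2)}$ (equivalently, of $G/TOP$), which are controlled by operations such as $\mathrm{Sq}^2$, act trivially on the purely even-dimensional integer Schubert cohomology of $G(n,k)$, and this can be checked via the Wu formulas on Chern classes. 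Granting this splitting, counting partitions by the parity of $|\mu|$ yields precisely $A_{n,k}$ copies of $\mathbb{Z}$ and $B_{n,k}$ copies of $\mathbb{Z}/2$, proving the proposition.
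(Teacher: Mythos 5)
Your argument is essentially the paper's own proof: identify the top pure stratum $Z-Z_{-1}$ with $G(n,k)$, pass to the complement of a regular neighborhood and apply Poincar\'e duality, and use the fact that the Schubert cells are all even-dimensional to collapse the Atiyah--Hirzebruch spectral sequence before counting partitions by the parity of $\sum\mu_i$. The one place you go beyond the paper is in explicitly raising and resolving the extension problem after the collapse (the paper silently reads off the answer as the direct sum of $E_\infty$-terms); your resolution is correct in spirit, though the cleanest justification is that $\mathbb{L}_{(2)}$ is a generalized Eilenberg--MacLane spectrum (Taylor--Williams), which kills all $2$-local extensions for \emph{any} space, while $\mathbb{L}[1/2]\simeq KO[1/2]$ together with the torsion-free even cohomology of $G(n,k)$ disposes of the odd primes.
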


\begin{proof}
The homology can be computed by a spectral sequence
\[
E^2_{p,q}=H_p(Z,Z_{-1};\pi_q{\bb L}(e))
=\begin{cases}
H_p(Z,Z_{-1};{\bb Z}), & \iif q=0\text{ mod }4, \\
H_p(Z,Z_{-1};{\bb Z}_2), & \iif q=2\text{ mod }4, \\
0, & \iif q\text{ is odd}.
\end{cases}
\]
Since the top pure stratum $Z-Z_{-1}$ is a manifold, by the Poincar\'e duality, we have $H_p(Z,Z_{-1};R)=H^{d-p}(Z-Z_{-1};R)$. The homotopy type of $Z-Z_{-1}$ is well known to be the complex Grassmanian $G(n,k)$. Therefore we have
\[
E^2_{p,q}
=\begin{cases}
H^{d-p}(G(n,k);{\bb Z}), & \iif q=0 \text{ mod }4, \\
H^{d-p}(G(n,k);{\bb Z}_2), & \iif q=2 \text{ mod }4, \\
0, & \iif q\text{ is odd}.
\end{cases}
\]
Using the CW structure of $G(n,k)$ given by the Schubert cells, which are all even dimensional, $E^2_{p,q}$ vanishes when either $q$ or $d-p$ is odd. This implies that all the differentials in $E^2_{p,q}$ vanish, so that the spectral sequence collapses, and we get
\[
H_d(Z,Z_{-1};{\bb L})
= \left(\oplus_{q\le d,\; q=0(4)} H^q(G(n,k);{\bb Z})\right)
\oplus\left(\oplus_{q\le d,\; q=2(4)} H^q(G(n,k);{\bb Z}_2)\right).
\]
Since $G(n,k)$ is a closed manifold, we always have $q\le \dim G(n,k)\le \dim Z=d$. Of course this is nothing but $q\le 2n(k-n)\le 2kn-1-n^2=d$. Therefore the requirement $q\le d$ is automatically satisfied in the summation above, and we have
\[
H_d(Z,Z_{-1};{\bb L})
={\bb Z}^{A_{n,k}}\oplus {\bb Z}_2^{B_{n,k}},
\]
where $A_{n,k}$ is the number of Schubert cells in $G(n,k)$ of dimension $0$ mod $4$, and $B_{n,k}$ is the number of Schubert cells of dimension $2$ mod $4$. The description of $A_{n,k}$ and $B_{n,k}$ in the proposition is the well known numbers of such Schubert cells.
\end{proof}

The unitary group $U(n)$ acts trivially on $S(k\rho_n\oplus j\epsilon)$ only when $n=0$ and $j>0$. In this case, we have $S^{\text{alg}}(X,X_{-1})=S^{\text{alg}}(X)=S(S^{j-1})$. (Here the first $S$ in $S(S^{j-1})$ means the structure set, not the sphere.) By Poincar\'e conjecture, the structure set of the sphere is trivial. This means that we should require $n>0$ in the notation ${\bb Z}^{A_{n,k}}\oplus {\bb Z}_2^{B_{n,k}}$.

The action is free only when $n=1$ and $j=0$. In this case, we have $S^{\text{alg}}(X,X_{-1})=S^{\text{alg}}(X)=S({\bb C}P^{k-1})$. The homology is still ${\bb Z}^{A_{1,k}}\oplus {\bb Z}_2^{B_{1,k}}$. But the surgery obstruction is $L_{2(k-1)}(\pi_1X,\pi_1X_{-1})=L_{2(k-1)}(\pi_1X)=L_{2(k-1)}(e)={\bb Z}$. Here we recall that $k-1=k-n$ is assumed even. Since this piece of surgery obstruction is simply the summand $H^0(G(1,k);{\bb Z})$ in the computation of the homology, this reduces the number of copies of ${\bb Z}$ by $1$. The computation is exactly the fake complex projective space studied in \cite[Section 14C]{wa2}. 

If $k-n$ is even, then Proposition \ref{compute} and the subsequent discussion about the exceptions can be applied to the summands $S^{\text{alg}}(X_{-2i},X_{-2i-1})$ in the decomposition for $S(X)=S_{U(n)}(S(k\rho_{n}\oplus j\epsilon))$ in Theorem \ref{ssplit}, simply by replacing $n$ with $n-2i$. The exception is that, in case $n$ is odd and $j=0$, the $U(1)$-action on $M^{U(n-1)}$ is free, so that $X_{-n}=\emptyset$. The exception happens to the last summand $S^{\text{alg}}(X_{-n+1},X_{-n})=S^{\text{alg}}(X_{-n+1})=S^{\text{alg}}({\bb C}P^{k-1})$, and the number of copies of ${\bb Z}$ is reduced by $1$. This concludes part 1 of Theorem \ref{mainth4}. 

If $k-n$ is odd, then Proposition \ref{compute} can be applied to all summands except the top one in the decomposition for $S(X)$ in Theorem \ref{ssplit2}, simply by replacing $n$ with $n-2i+1$. The exception is that, in case $n$ is even and $j=0$, the last summand is $S^{\text{alg}}(X_{-n+1})=S^{\text{alg}}({\bb C}P^{k-1})$, and the number of copies of ${\bb Z}$ should be reduced by $1$. The top summand $S^{\text{alg}}(X)$ may be computed by the surgery fibration
\[
{\bb S}^{\text{alg}}(X)
\to {\bb H}(X;{\bb L}) 
\to {\bb L}(\pi_1X).
\]
Since $X$ is simply connected, ${\bb L}(\pi_1X)$ is the usual surgery specturm ${\bb L}$, and the assembly map is induced by the map from $X$ to the single point. Therefore 
\[
S^{\text{alg}}(X)
=\tilde{H}_{d+j}(X;{\bb L})
=\begin{cases}
H_d(Z;{\bb L}), & \text{if }j>0, \\
\tilde{H}_d(Z;{\bb L}), & \text{if }j=0.
\end{cases}
\]
The reduced homology is given by Proposition \ref{compute2} of the appendix by Jared Bass. Since $k-n$ is odd, we have
\[
\tilde{H}_d(Z;{\bb L})
={\bb Z}^{A_{n,k-1}}\oplus {\bb Z}_2^{B_{n,k-1}}.
\]
The unreduced homology is modified from the reduced one accordingly to Corollary \ref{compute2-unreduced} of the appendix. This concludes part 2 of Theorem 2 \ref{mainth4}.

\section{Suspension of Multiaxial Representation Sphere}
\label{suspend}

The suspension map is natural with respect to the restrictions to fixed points of unitary subgroups. In other words, the following diagram is commutative.
\begin{equation*}
\begin{CD}
S_{U(n)}(S(k\rho_n\oplus j\epsilon)) @>{*S(\rho_n)}>> 
S_{U(n)}(S((k+1)\rho_n\oplus j\epsilon)) \\
@VV{\text{res}}V @VV{\text{res}}V \\
S_{U(n-i)}(S(k\rho_{n-i}\oplus j\epsilon)) @>{*S(\rho_{n-i})}>> 
S_{U(n-i)}(S((k+1)\rho_{n-i}\oplus j\epsilon))
\end{CD}
\end{equation*}
Since the decomposition of the structure sets of multiaxial manifolds in Section \ref{structure} is obtained from such restrictions, it is tempting to break the suspension map into a direct sum of suspension maps between direct summands. However, such a decomposition is not immediately clear because the parity requirements on $k-n+i$ for the split subjectivity of the restriction maps on the left and right sides are different.

So instead of the (single) suspension, we consider the commutative diagram of the double suspension map.
\begin{equation*}
\begin{CD}
S_{U(n)}(S(k\rho_n\oplus j\epsilon)) @>{*S(2\rho_n)}>> 
S_{U(n)}(S((k+2)\rho_n\oplus j\epsilon)) \\
@VV{\text{res}}V @VV{\text{res}}V \\
S_{U(n-i)}(S(k\rho_{n-i}\oplus j\epsilon)) @>{*S(2\rho_{n-i})}>> 
S_{U(n-i)}(S((k+2)\rho_{n-i}\oplus j\epsilon))
\end{CD}
\end{equation*}
Since the parity requirements for the split subjectivity are the same on both sides, the double suspension map is indeed a direct sum of double suspension maps between direct summands of the respective decompositions of the structure sets. 

We will argue that the double suspension maps between direct summands are injective. This implies that the whole double suspension map is also injective. Since the double suspension map is the composition of two (single) suspension maps, the suspension map is also injective.

To simplify the notations in the discussion, we assume $j=0$. Let
\[
X=S(k\rho_n)/U(n),\quad
Y=S((k+2)\rho_n)/U(n).
\]
We have 
\[
S((k+2)\rho_n)=S(k\rho_n)\times D(2\rho_n)\cup S(2\rho_n),\quad
Y=(S(k\rho_n)\times D(2\rho_n))/U(n)\cup D^3,
\]
and a stratified system of fibrations
\[
p\colon (S(k\rho_n)\times D(2\rho_n))/U(n)
\to X=S(k\rho_n)/U(n).
\]
An element of $S_{U(n)}(S(k\rho_n))$ may be interpreted as a stratified simple homotopy equivalence $f\colon X'\to X$. The pullback of $p$ along $f$ gives a stratified simple homotopy equivalence $Y'\to Y$, which after adding the extra $D^3$ further gives the suspension element of $f$ in $S_{U(n)}(S((k+2)\rho_n))$.

Suppose $k-n$ is even. Then the double suspension map $*S(2\rho_n)$ decomposes into suspension maps between the direct summands
\[
\sigma_i\colon S^{\text{alg}}(X_{-2i},X_{-2i-1})
\to S^{\text{alg}}(Y_{-2i},Y_{-2i-1}).
\]
By the computation in Section \ref{repsphere}, with one exception, the direct summands are the same as the corresponding normal invariants. Therefore we consider the suspension maps on the normal invariants
\[
\sigma_i\colon H_{\dim X_{-2i}}(X_{-2i},X_{-2i-1};{\bb L})
\to H_{\dim Y_{-2i}}(Y_{-2i},Y_{-2i-1};{\bb L}).
\]
The interpretation of the suspension as the pullback of $p$ implies that the suspension of the normal invariants is simply given by the transfer along $p$. On the strata we are interested in, the projection
\[
Y_{-2i}\supset (S(k\rho_{n-2i})\times D(2\rho_{n-2i}))/U(n-2i)
\xrightarrow{p} X_{-2i}=S(k\rho_{n-2i})/U(n-2i)
\]
takes (the orbit of) a $(k+2)$-tuple $\xi=(v_1,\dots,v_k,v_{k+1},v_{k+2})$ of vectors in $\rho_{n-2i}$ and drops the last two vectors $v_{k+1}$ and $v_{k+2}$. Note that $\xi$ is mapped into the pure stratum $X^{-2i}$ if and only if the $k$-tuple $p(\xi)=(v_1,\dots,v_k)$ already spans the whole vector space $\rho_{n-2i}$. This implies that $p^{-1}X^{-2i}\to X^{-2i}$ is a trivial bundle with fibre $D^{2(n-i)}$ (given by the choices $(v_{k+1},v_{k+2})\in D(2\rho_{n-2i})$). By
\[
p^{-1}X^{-2i}=p^{-1}X_{-2i}-p^{-1}X_{-2i-1}
=Y_{-2i}-p^{-1}X_{-2i-1}\cup D^3,\quad
Y_{-2i-1}\sub p^{-1}X_{-2i-1}\cup D^3,
\]
up to excision, the pair $(Y_{-2i},p^{-1}X_{-2i-1}\cup D^3)$ is the same as the Thom space of the trivial disk bundle $p^{-1}X^{-2i}\to X^{-2i}$. The transfer of the normal invariants along this bundle may be identified with the homological Thom isomorphism, and we have a commutative diagram
\begin{equation*}
\begin{CD}
H_{\dim X_{-2i}}(X_{-2i},X_{-2i-1};{\bb L}) @>{\sigma_i}>> 
H_{\dim Y_{-2i}}(Y_{-2i},Y_{-2i-1};{\bb L}) \\
@| @VV{\text{incl}}V \\
H_{\dim X_{-2i}}(X_{-2i},X_{-2i-1};{\bb L})  @>{\text{Thom }\cong}>>
H_{\dim Y_{-2i}}(Y_{-2i},p^{-1}X_{-2i-1}\cup D^3;{\bb L})
\end{CD}
\end{equation*}
The commutative diagram shows that the suspension map $\sigma_i$ is injective.

There is only one exception to the discussion above. In case $n$ is odd (so $k$ is also odd) and $j=0$, the last summand in the decomposition of $S_{U(n)}(S(k\rho_n))$ is $S({\bb C}P^{k-1})$. The double suspension is the usual double suspension map $S({\bb C}P^{k-1})\to S({\bb C}P^{k+1})$, which is well known to be injective. In fact, the structure sets also embed into the corresponding normal invariants, and the injectivity still follows from the Thom isomorphism. This completes the proof of the injectivity of the suspension for the case $k-n$ is even.

Now we turn to the case $k-n$ is odd. The double suspension map $*S(2\rho_n)$ decomposes into suspension maps 
\[
\sigma_i\colon S^{\text{alg}}(X_{-2i+1},X_{-2i})
\to S^{\text{alg}}(Y_{-2i+1},Y_{-2i}),\quad i\ge 1,
\]
and
\[
\sigma_0\colon S^{\text{alg}}(X)
\to S^{\text{alg}}(Y).
\]
The argument for the injectivity of $\sigma_i$ for $i\ge 1$ is the same as the case $k-n$ is even. By the computation in Section \ref{repsphere}, the top summands are the same as the reduced homologies
\[
\sigma_0\colon \tilde{H}_{\dim X}(X;{\bb L})
\to \tilde{H}_{\dim Y}(Y;{\bb L}).
\]
Let $X'^0\sub X^0\sub X$ be those points represented by $k$-tuples of vectors in $\rho_n$, such that the first $(k-1)$ vectors already span the whole vector space $\rho_n$. (This means $r=n$ and $m_n>1$ in Bass' terminology.) Then by the computation of Jared Bass, the map $\tilde{H}_{\dim X}(X;{\bb L})\to \tilde{H}_{\dim X}(X,X-X'^0;{\bb L})$ is injective. On the other hand, the preimage $Y'^0=p^{-1}(X'^0)\sub Y^0\sub Y$ consists of those $(k+2)$-tuples in $\rho_n$, such that the first $(k-1)$ vectors already span the whole vector space $\rho_n$. (This means $r=n$ and $m_n>3$ in Bass' terminology.) Since $Y'^0$ is obtained by adding two vectors $(v_{k+1},v_{k+2})\in D(2\rho_{n})$ to the representatives of points in $X'^0$, the projection $Y'^0\to X'^0$ is a trivial bundle with $D^{2n}$ as fibre. The transfer of the normal invariants along this bundle may be identified with the homological Thom isomorphism, and we have a commutative diagram
\begin{equation*}
\begin{CD}
\tilde{H}_{\dim X}(X;{\bb L}) @>{\sigma_0}>> 
\tilde{H}_{\dim Y}(Y;{\bb L}) \\
@VV{\text{inj}}V @VV{\text{incl}}V \\
H_{\dim X}(X,X-X'^0;{\bb L})  @>{\text{Thom }\cong}>>
H_{\dim Y}(Y,Y-Y'^0;{\bb L})
\end{CD}
\end{equation*}
The commutative diagram shows that the suspension map $\sigma_0$ is injective.

Again there is only one exception to the discussion. In case $n$ is even (so $k$ is odd), and $j=0$, the last summand in the decomposition of $S_{U(n)}(S(k\rho_n))$ is $S({\bb C}P^{k-1})$. The double suspension on this summand is injective, just like the exceptional case when $k-n$ is even. This completes the proof of the injectivity of the suspension for the case $k-n$ is odd.

\section{Multiaxial $Sp(n)$-manifolds}
\label{quat}

The symplectic group $Sp(n)$ consists of $n\times n$ quaternionic matrices that preserve the standard hermitian form on ${\bb H}^n$,
\[
\langle x,y\rangle
=\bar{x}_1y_1+\bar{x}_2y_2+\dotsb+\bar{x}_ny_n.
\]
A symplectic subgroup associated to a quaternionic subspace of ${\bb H}^n$ consists of the quaternionic matrices that preserve the standard hermitian form and fix the quaternionic subspace. Any symplectic subgroup is conjugate to a specific symplectic subgroup $Sp(i)$ associated to the specific subspace $0\oplus {\bb H}^{n-i}$.

We call an $Sp(n)$-manifold multiaxial, if any isotropy group is a  symplectic subgroup, and lower strata are locally flat submanifolds of higher strata. All our discussion about multiaxial $U(n)$-manifolds can be extended to multiaxial $Sp(n)$-manifolds.

The role played by $U(1)=S^1$ is replaced by $Sp(1)=S^3$, the group of quaternions of unit length. If $S^3$ acts freely on a sphere, then the dimension of the sphere is $3$ mod $4$, and the quotient is homotopy equivalent to ${\bb H}P^r$. In analogy to the unitary case, we have $M^{Sp(j)}=M^{T^j}$ for the maximal torus $T^j$ of $Sp(j)$, and all such maximal tori for the given $j$ are conjugate in $Sp(n)$. Hence the proof of Lemma \ref{link} using the Borel formula remains valid, and we get a quaternionic version of the formula for the first gap,  
\[
\dim M^{Sp(j-1),x}-\dim M^{Sp(j),x} 
=4(r_1^x+n).
\]
Since ${\bb H}P^r$ is always connected and simply connected, Lemma \ref{fund} can also be applied to multiaxial $Sp(n)$-manifolds. 

The key reasons behind the results in Section \ref{obstruction} is that for even $r$, ${\bb C}P^r$ is a manifold of signature one, which makes the surgery transfer an equivalence, even after taking account of the monodromy. This remains valid with ${\bb H}P^r$ in place of ${\bb C}P^r$, so that all the results in Section \ref{obstruction} still hold. 

The key reason that we can apply the results in Section \ref{obstruction} to multiaxial $U(n)$-manifolds is that the fibres of the link fibration between adjacent strata are homotopy equivalent to ${\bb C}P^r$, and the link fibration has trivial monodromy and is therefore orientable. Since the same reason remains valid for multiaxial $Sp(n)$-manifolds, the splitting theorems in Section \ref{structure} can be extended.

\begin{theorem}
Suppose $M$ is a multiaxial $Sp(n)$-manifold, such that the connected components of $M^{Sp(1)}$ have codimensions $4n$ mod $8$. Then we have natural splitting
\[
{\bb S}_{Sp(n)}(M)
=\oplus_{i\ge 0}{\bb S}_{Sp(n-2i)}(\bar{M}^{Sp(2i)},\pa\bar{M}^{Sp(2i)})
=\oplus_{i\ge 0}{\bb S}^{\text{\rm alg}}(X_{-2i},X_{-2i-1}).
\]
\end{theorem}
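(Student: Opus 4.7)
My plan is to mirror the proof of Theorem \ref{ssplit} line by line, using the four ingredients the authors have just highlighted as transferring from the unitary to the symplectic setting: the quaternionic first-gap formula $\dim M^{Sp(j-1),x}-\dim M^{Sp(j),x}=4(r_1^x+n)$, the quaternionic analog of Lemma \ref{fund} (valid because $\mathbb{H}P^r$ is connected and simply connected), the decomposition machinery of Section \ref{obstruction} with $\mathbb{C}P^r$ replaced by $\mathbb{H}P^r$ of even $r$ (still signature one, still yielding a surgery transfer equivalence), and the hereditary property $M_{-i}/Sp(n)=M^{Sp(i)}/Sp(n-i)$.

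First I would translate the codimension hypothesis into a statement about the first gap. The codimension of a component of $M^{Sp(1)}$ equals $4(r+n)$ by the quaternionic gap formula, so the condition "codimension $\equiv 4n \bmod 8$" is equivalent to $r$ being even. Next I would check the hypotheses of the quaternionic version of Theorem \ref{generalsplit2} for $X = M/Sp(n)$: (i) the link of $X_{-1}$ in $X$ is the quotient of a homotopy sphere by a free $Sp(1)=S^3$ action, hence homotopy equivalent to $\mathbb{H}P^r$ with $r$ even; (ii) the $S^3$-equivariant monodromy on the link sphere is homotopic to the identity, so the induced $\mathbb{H}P^r$-fibration is orientable; (iii) the pure strata of all links are simply connected by the quaternionic Lemma \ref{fund}. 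Applying this theorem gives the natural splitting
\[
{\bb S}_{Sp(n)}(M)={\bb S}_{Sp(n)}(M,\rel M_{-2})\oplus {\bb S}_{Sp(n)}(M_{-2}),
\]
together with the identification of the first summand as ${\bb S}(\bar{X}^0,\pa\bar{X}^0)={\bb S}^{\text{alg}}(X,X_{-1})={\bb S}_{Sp(n)}(\bar{M}^0,\pa\bar{M}^0)$.

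Then I would iterate. The hereditary property says ${\bb S}_{Sp(n)}(M_{-2})={\bb S}_{Sp(n-2)}(M^{Sp(2)})$, and $M^{Sp(2)}$ is itself a multiaxial $Sp(n-2)$-manifold. Using the quaternionic gap formula as in the unitary proof, for any $x\in M^{Sp(i)}$ with $i>2$ the first gap in $M^{Sp(2)}$ is $r^x_3=r^x_1+2$: indeed
\[
\dim(M^{Sp(2)})^{Sp(j-1),x}-\dim(M^{Sp(2)})^{Sp(j),x}
=4(r^x_1+n)=4(r^x_3+(n-2)).
\]
Hence the evenness of first gaps is preserved under passage from $M$ to $M^{Sp(2)}$, and the splitting hypothesis feeds into itself. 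Iterating produces the asserted direct sum decomposition, and at each stage Theorem \ref{generalsplit2} identifies the relative summand with ${\bb S}_{Sp(n-2i)}(\bar{M}^{Sp(2i)},\pa\bar{M}^{Sp(2i)})={\bb S}^{\text{alg}}(X_{-2i},X_{-2i-1})$.

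The only step that is not a purely formal translation is checking that Theorem \ref{generalsplit2}, and hence Proposition \ref{periodicity}, really does survive with $\mathbb{H}P^r$ in place of $\mathbb{C}P^r$; this is where the main work sits. The key input is that the transfer map in $\mathbb{L}$-theory along an orientable fibre bundle with fibre a closed simply connected manifold of signature $1$ is an equivalence (by the Lück--Ranicki up-down formula, using that the $\pi_1$-equivariant intersection form is tensoring with the trivial module $\mathbb{Z}$ of the middle-dimensional cohomology). Since $\mathbb{H}P^r$ with $r$ even has signature $1$ and the fibration is orientable with trivial monodromy, the argument of Proposition \ref{periodicity} goes through verbatim, and the rest of Section \ref{obstruction} follows. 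Once that is verified, the theorem is obtained by assembling the ingredients exactly as in the unitary case.
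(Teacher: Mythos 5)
Your proposal is correct and follows essentially the same route as the paper: Section \ref{quat} proves this theorem precisely by noting that the quaternionic gap formula, the simple connectivity of $\mathbb{H}P^r$, the trivial monodromy of the link fibration, and the signature-one transfer argument for $\mathbb{H}P^r$ with $r$ even all carry over, and then invoking the machinery of Sections \ref{obstruction} and \ref{structure} verbatim. Your translation of the codimension hypothesis into evenness of the first gap and your identification of the transfer equivalence as the only non-formal step match the paper's own (sketched) argument.
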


\begin{theorem}
Suppose $M$ is a multiaxial $Sp(n)$-manifold, such that the connected components of $M^{Sp(1)}$ have codimensions $4(n+1)$ mod $8$. If $M=W^{Sp(1)}$ for a multiaxial $Sp(n+1)$-manifold $W$, then we have natural splitting
\[
{\bb S}_{Sp(n)}(M)
={\bb S}^{\text{\rm alg}}(X)\oplus\left(\oplus_{i\ge 1}{\bb S}^{\text{\rm alg}}(X_{-2i+1},X_{-2i})\right).
\]
Moreover,
\[
{\bb S}^{\text{\rm alg}}(X_{-2i+1},X_{-2i})
={\bb S}_{Sp(n-2i+1)}(\bar{M}^{Sp(2i-1)},\pa\bar{M}^{Sp(2i-1)}).
\]
\end{theorem}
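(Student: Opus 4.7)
The plan is to mirror the proof of the unitary analogue Theorem \ref{ssplit2} in the symplectic setting, invoking the $Sp(n)$-versions of Lemmas \ref{link}, \ref{fund}, \ref{hereditary} and of the decomposition Theorems \ref{generalsplit}, \ref{generalsplit2}, \ref{generalsplit3}. The validity of these ingredients was recorded in the preamble of this section.

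First I would translate the hypothesis on codimensions. The quaternionic gap formula $\dim M^{Sp(j-1),x} - \dim M^{Sp(j),x} = 4(r_1^x + n)$ shows that a connected component of $M^{Sp(1)}$ has codimension $\equiv 4(n+1) \pmod 8$ in $M$ precisely when its first gap $r = r_1^x$ is odd. Setting $Y = W/Sp(n+1)$ and using the hereditary identification $X_{-i} = Y_{-i-1}$, the quaternionic Lemma \ref{link} applied inside $W$ yields that for $x \in X_{-1} = Y_{-2}$ the first gap of $Y$ at $x$ equals $r - 1$, hence is even. Thus the multiaxial $Sp(n+1)$-manifold $W$ satisfies the hypothesis of the quaternionic analogue of Theorem \ref{generalsplit2}.

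Next I would invoke the $Sp$-analogue of the observation preceding Theorem \ref{generalsplit3}: since $X = Y_{-1}$ is the singular part of $Y$ and $Y$ satisfies the hypothesis of the quaternionic Theorem \ref{generalsplit2}, we obtain the natural splitting
\[
{\bb S}_{Sp(n)}(M) = {\bb S}(X) = {\bb S}(X, \rel X_{-1}) \oplus {\bb S}(X_{-1}).
\]
The first summand is identified via the quaternionic Theorem \ref{generalsplit3}, whose only hypothesis (the simple connectivity of the top pure stratum of each link) is satisfied because ${\bb H}P^r$ is simply connected; this gives ${\bb S}(X, \rel X_{-1}) = {\bb S}^{\text{alg}}(X)$. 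For the second summand, Lemma \ref{hereditary} identifies $X_{-1}$ with $M^{Sp(1)}/Sp(n-1)$, and the gap formula yields that $M^{Sp(1)}$, viewed as a multiaxial $Sp(n-1)$-manifold, has first gap $r + 1$, which is even. Applying the quaternionic version of Theorem \ref{ssplit} to $M^{Sp(1)}$ then produces
\[
{\bb S}(X_{-1}) = \oplus_{i \ge 1} {\bb S}^{\text{alg}}(X_{-2i+1}, X_{-2i}),
\]
together with the geometric identification ${\bb S}^{\text{alg}}(X_{-2i+1}, X_{-2i}) = {\bb S}_{Sp(n-2i+1)}(\bar{M}^{Sp(2i-1)}, \pa \bar{M}^{Sp(2i-1)})$ obtained (as in the proof of Theorem \ref{ssplit}) by deleting an equivariant regular neighborhood of the lower strata.

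The main obstacle, already surmounted in the preamble of this section, is the $Sp$-analogue of Proposition \ref{periodicity}: that the surgery transfer along an ${\bb H}P^r$-fibration with $r$ even is a homotopy equivalence. This rests on two facts: ${\bb H}P^r$ has signature one just as ${\bb C}P^r$ does (so the up-down formula of \cite{lr} applies verbatim), and the link fibration between adjacent strata of a multiaxial $Sp(n)$-manifold is orientable with trivial monodromy, because it arises as the quotient of the equivariant link sphere by a free $Sp(1)$-action, forcing the induced monodromy on ${\bb H}P^r$ to be homotopic to the identity. Granting this input, the remaining steps are a formal transcription of the unitary proof.
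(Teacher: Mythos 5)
Your proposal is correct and follows exactly the route the paper intends: the paper gives no separate proof of this theorem, asserting only that the argument for the unitary Theorem \ref{ssplit2} carries over once one checks that ${\bb H}P^r$ is simply connected, has signature one for even $r$, that the link fibrations have trivial monodromy, and that the Borel-formula gap computation yields $\dim M^{Sp(j-1),x}-\dim M^{Sp(j),x}=4(r_1^x+n)$ --- all of which you verify. Your translation of the codimension hypothesis into odd first gap, the passage to $Y=W/Sp(n+1)$ with $X_{-i}=Y_{-i-1}$, and the application of the quaternionic versions of Theorems \ref{generalsplit2}, \ref{generalsplit3} and \ref{ssplit} match the paper's unitary proof step for step.
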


Theorem \ref{ssplit3} can be extended. The proof of Theorem \ref{mainth2} at the end of Section \ref{structure} can also be extended, in view of the fact that the signature of ${\bb H}P^r$ is zero for odd $r$. So we have the quaternonic version of Theorem \ref{mainth2}. 

\begin{theorem}
Suppose the quaternionic sphere $S^3$ acts semifreely on a topological manifold $M$, such that the fixed points $M^{S^3}$ is a locally flat submanifold. Let $M_0^{S^3}$ and $M_2^{S^3}$ be the unions of those connected components of $M^{S^3}$ that are, respectively, of codimensions $0$ mod $8$ and $4$ mod $8$. Let $N$ be the complement of (the interior of) an equivariant tube neighborhood of $M^{S^3}$, with boundaries $\pa_0N$ and $\pa_2N$ corresponding to the two parts of the fixed points. Then
\[
S_{S^3}(M)=S(M_0^{S^3})\oplus S(N/S^3,\pa_2N/S^3,\rel \pa_0N/S^3).
\]
\end{theorem}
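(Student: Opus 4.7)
The plan is to mimic the proof of Theorem~\ref{mainth2} essentially line by line, substituting $S^3$ for $S^1$ and ${\bb H}P^r$ for ${\bb C}P^r$ throughout. First, the quaternionic dimension formula $\dim M^{Sp(j-1),x}-\dim M^{Sp(j),x}=4(r_1^x+n)$ specialized to $n=1$ gives codimension $4(r+1)$ for a fixed-set component with first gap $r$, so that $M_0^{S^3}$ (codim $0$ mod $8$) corresponds to odd $r$ and $M_2^{S^3}$ (codim $4$ mod $8$) corresponds to even $r$, which is exactly the parity split used in the complex case.

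Next I would apply the quaternionic analogue of Theorem~\ref{ssplit3}, which the paper has already asserted extends to multiaxial $Sp(n)$-manifolds, to this semifree $S^3$-manifold. Because the action is semifree the only strata of $M$ are the free part and $M^{S^3}$, so all summands indexed by $i\ge 1$ in that decomposition are empty and one obtains
\[
{\bb S}_{S^3}(M)={\bb S}_{S^3}(\overline{M-N_2},\pa N_2),
\]
where $N_2$ is an equivariant tube around $M_2^{S^3}$. In the remaining $Sp(1)$-manifold $\overline{M-N_2}$ the only fixed components are those of $M_0^{S^3}$, all with odd first gap, and (by the same triviality-of-monodromy argument cited at the start of Section~\ref{structure}, which was noted to carry over to the $Sp$ case) the link fibration of $M_0^{S^3}$ in $\overline{M-N_2}/S^3$ has trivial monodromy with fibre homotopy equivalent to ${\bb H}P^r$ for odd $r$.

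The crucial input is that $\sigma({\bb H}P^r)=0$ for odd $r$: the middle cohomology sits in dimension $2r\equiv 2\pmod 4$, so the intersection form is skew-symmetric. By \cite{lr,morgan} this means crossing with ${\bb H}P^r$ kills the surgery obstruction, and the homotopy replacement argument of \cite{cw,cwy} then shows that the restriction
\[
{\bb S}_{S^3}(\overline{M-N_2},\pa N_2)\to{\bb S}_{S^3}(M_0^{S^3})=S(M_0^{S^3})
\]
is split surjective. The kernel is precisely ${\bb S}_{S^3}(\overline{M-N_2},\pa N_2,\rel\pa_0 N)=S(N/S^3,\pa_2 N/S^3,\rel\pa_0 N/S^3)$, and combining this with the first splitting yields the theorem.

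The principal obstacle I anticipate is verifying that the homotopy replacement machinery of \cite{cw,cwy}, originally deployed for ${\bb C}P^r$-link fibrations, transports faithfully to ${\bb H}P^r$-links. The algebraic ingredient (vanishing of the surgery transfer along a trivial-monodromy ${\bb H}P^r$-bundle) is immediate from $\sigma({\bb H}P^r)=0$ for odd $r$; once that is in hand the geometric replacement construction is formally parallel to the complex case, so this last step should demand only notational adjustments rather than new ideas.
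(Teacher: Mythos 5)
Your proposal is correct and follows the paper's own route exactly: the paper proves this quaternionic statement by extending Theorem~\ref{ssplit3} to multiaxial $Sp(n)$-manifolds and then repeating the proof of Theorem~\ref{mainth2} verbatim, with the vanishing of the signature of ${\bb H}P^r$ for odd $r$ as the key input to the replacement step. One small correction to your justification of that vanishing: it holds not because the intersection form on the middle cohomology is skew-symmetric (the form on $H^{2r}$ of a $4r$-manifold is symmetric, $2r$ being even), but because $H^{2r}({\bb H}P^r)=0$ when $r$ is odd, the cohomology of ${\bb H}P^r$ being concentrated in degrees divisible by $4$.
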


We can also compute the structure sets of multiaxial $Sp(n)$-representation spheres. The dimensions of the Schubert cells of quaternionic Grassmannians $G_{\bb H}(n,k)$ are multiples of $4$, so that the analogue of Proposition \ref{compute} gives copies of $L_{4i}(e)={\bb Z}$, regardless of the parity. Since the total number of Schubert cells in $G_{\bb H}(n,k)$ is $A_{n,k}+B_{n,k}=\binom{k}{n}$, we have
\[
H_d(S(k\rho_n)/Sp(n),S(k\rho_n)_{-1}/Sp(n);{\bb L})
={\bb Z}^{\binom{k}{n}},\quad k\ge n,
\]
where
\[
d=\dim S(k\rho_n)/Sp(n)=4kn-1-n(2n+1).
\]

On the other hand, the CW structure by Jared Bass can also be applied to the orbit space $S(k\rho_n)/Sp(n)$. The reason is that for complex matrices, the unique representative by row echelon form is a consequence of the fact that $GL(n,{\bb C})=U(n)N$, where $U(n)$ is the maximal compact subgroup of the semisimple Lie group $SL(n,{\bb C})$ and $N$ is the upper triangular matrix with positive diagonal entries. This is a special example of the Iwasawa decomposition. When the decomposition is applied to the semisimple Lie group $SL(n,{\bb H})$, for which $Sp(n)$ is the maximal compact subgroup, we get $GL(n,{\bb H})=Sp(n)N$. Therefore the orbit space $S(k\rho_n)/Sp(n)$ has cells $B(m_1,\dotsc,m_r)$ similar to the orbit space $S(k\rho_n)/U(n)$, except that
\[
\dim B(m_1,\dotsc,m_r)=4(m_1+\dotsb+m_r)-3r-1.
\] 
This leads to the analogue of Proposition \ref{compute2}
\[
\tilde{H}_d(S(k\rho_n)/Sp(n);{\bb L})
={\bb Z}^{\binom{k-1}n},\quad
k\ge n.
\]
For the case $k-n$ is odd, this is the top summand 
\[
S^{\text{alg}}(S(k\rho_n)/Sp(n))
=\tilde{H}_d(S(k\rho_n)/Sp(n);{\bb L})
\]
in the decomposition of the structure set $S_{Sp(n)}(S(k\rho_n))$. If $k-n$ is odd and $j>0$, then the top summand is
\begin{align*}
S^{\text{alg}}((k\rho_n\oplus j\epsilon)/Sp(n))
&=\tilde{H}_{d+j}(X;{\bb L})
=H_d(S(k\rho_n)/Sp(n);{\bb L}) \\
&=\tilde{H}_d(S(k\rho_n)/Sp(n);{\bb L})\oplus H_0(Z;\pi_d{\bb L}).
\end{align*}
The extra homology at the base point is
\[
H_0(Z;\pi_{4kn-1-n(2n+1)}{\bb L})
=L_{4kn-1-n(2n+1)}(e)
=\begin{cases}
{\bb Z}, & \iif n=1\text{ mod }4, \\
{\bb Z}_2, & \iif n=3\text{ mod }4, \\
0, & \iif n \text{ is even}.
\end{cases}
\]

Finally, we need to consider the case the last summand in the decomposition is $S({\bb H}P^{k-1})$, which happens when $k,n$ odd and $j=0$, or $k$ odd, $n$ even and $j=0$. In this case, the number of copies of ${\bb Z}$ should be reduced by $1$. 

In summary, the quaternionic analogue of Theorem \ref{mainth4} is the following.

\begin{theorem}\label{structure1b}
Suppose $k\ge n$ and $\rho_n$ is the canonical representation of $Sp(n)$. 
\begin{enumerate}
\item If $k-n$ is even, then
\[
S_{Sp(n)}(S(k\rho_n\oplus j\epsilon))
={\bb Z}^{\sum_{0\le 2i<n}\binom{k}{n-2i}},
\]
with the only exception that there is one less ${\bb Z}$ in case $n$ is odd and $j=0$.
\item If $k-n$ is odd, then
\[
S_{Sp(n)}(S(k\rho_n\oplus j\epsilon))
={\bb Z}^{\binom{k-1}{n}+\sum_{0\le 2i-1<n}\binom{k}{n-2i+1}},
\]
with the following exceptions: (i) There is one less ${\bb Z}$ in case $n$ is even and $j=0$; (ii) There is one more ${\bb Z}$ in case $n=1$ mod $4$ and $j>0$; (iii) There is one more ${\bb Z}_2$ in case $n=3$ mod $4$ and $j>0$.
\end{enumerate}
\end{theorem}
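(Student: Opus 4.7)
The plan is to mirror the derivation in Section \ref{repsphere} for the unitary case, exploiting the fact that the Schubert cells of the quaternionic Grassmannians $G_{\bb H}(n', k)$ all have dimensions divisible by $4$. First I would invoke the two quaternionic splitting theorems proved earlier in this section to decompose $S_{Sp(n)}(S(k\rho_n\oplus j\epsilon))$ into a direct sum of summands of the form $S^{\text{alg}}(X_{-m},X_{-m-1})$, with $m$ ranging over the appropriate even or odd values in $[0,n)$, plus a top summand $S^{\text{alg}}(X)$ in the $k-n$ odd case. By the quaternionic version of Lemma \ref{hereditary} and the identification $M^{Sp(m)}=S(k\rho_{n-m}\oplus j\epsilon)$, each summand reduces uniformly to computing an $L$-homology of $(S(k\rho_{n'})/Sp(n'),\,S(k\rho_{n'})_{-1}/Sp(n'))$ for a smaller $n'$; the relevant surgery obstructions vanish by simple connectivity of the strata (the quaternionic extension of Lemma \ref{fund}).

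Next I would run the quaternionic analogue of Proposition \ref{compute} on each relative summand. Applying Poincar\'e duality to the top pure stratum, which retracts onto $G_{\bb H}(n',k)$, the Atiyah--Hirzebruch spectral sequence for $H_d(\cdot;{\bb L})$ has $E^2_{p,q}$ concentrated at $q\equiv 0,2\pmod 4$ and $d-p\equiv 0\pmod 4$. Because $G_{\bb H}(n',k)$ has no cohomology in degrees not divisible by $4$, the mod-$2$ coefficient entries at $q\equiv 2\pmod 4$ vanish, and the surviving entries contribute one copy of ${\bb Z}$ per Schubert cell, giving ${\bb Z}^{\binom{k}{n'}}$ after collapse. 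Summing over $n'=n-2i$ (or $n'=n-2i+1$) produces the main terms in parts (1) and (2). For the top summand $S^{\text{alg}}(X)$ in the $k-n$ odd case I would apply the quaternionic Bass CW decomposition of $S(k\rho_n)/Sp(n)$, derived from the Iwasawa decomposition $GL(n,{\bb H})=Sp(n)N$, using the modified dimension formula $\dim B(m_1,\dotsc,m_r)=4(m_1+\dotsb+m_r)-3r-1$ to extract $\tilde H_d={\bb Z}^{\binom{k-1}{n}}$.

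Finally I would collect the exceptional corrections. The ``one less ${\bb Z}$'' clauses in both parts arise from the summand $S^{\text{alg}}({\bb H}P^{k-1})$, which appears when the relevant $n'$ equals $1$ and $j=0$: since $\pi_1{\bb H}P^{k-1}$ is trivial and $4k-4\equiv 0\pmod 4$, the surgery obstruction $L_{4k-4}(e)={\bb Z}$ is nontrivially hit by the assembly map, removing one copy of ${\bb Z}$ from the raw homological count. When $k-n$ is odd and $j>0$, the unreduced top-summand homology carries an additional $H_0(\mathrm{pt};\pi_d{\bb L})=L_d(e)$ summand with $d=4kn-1-n(2n+1)$; a direct mod-$4$ analysis shows $L_d(e)={\bb Z}$ for $n\equiv 1\pmod 4$, $L_d(e)={\bb Z}_2$ for $n\equiv 3\pmod 4$, and $L_d(e)=0$ for $n$ even, matching exceptions (ii) and (iii) exactly. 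The principal obstacle is the quaternionic Bass calculation: although $GL(n,{\bb H})=Sp(n)N$ makes a quaternionic row-echelon normal form plausible, one must verify carefully that the resulting cell structure and boundary analysis reproduce the asserted reduced homology. Once that ingredient is in hand, assembling the summands and corrections completes the proof.
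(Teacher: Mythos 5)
Your proposal is correct and follows essentially the same route as the paper: the quaternionic splitting theorems, the analogue of Proposition \ref{compute} using the $4$-divisibility of Schubert cell dimensions in $G_{\bb H}(n',k)$ to get ${\bb Z}^{\binom{k}{n'}}$ per summand, the Iwasawa-decomposition-based Bass CW structure for the top summand $\tilde H_d = {\bb Z}^{\binom{k-1}{n}}$, and the same corrections from the $S({\bb H}P^{k-1})$ summand and the basepoint term $L_d(e)$ with $d=4kn-1-n(2n+1)$.
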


Finally, the discussion on the suspension 
\[
*S(\rho_n)\colon
S_{Sp(n)}(S(k\rho_n\oplus j\epsilon))
\to S_{Sp(n)}(S((k+1)\rho_n\oplus j\epsilon))
\]
can be carried out just like Section \ref{suspend} and we conclude the injectivity of the suspension.

\appendix
\section{Homology of Quotients of Multiaxial Representation Spheres, by Jared Bass}

Following earlier notation, we say
\[
Z = S(k\rho_n)/U(n), \quad
d = \dim Z = 2kn - 1 - n^2.
\]
Through an explicit CW decomposition, we will compute the reduced homology $\tilde{H}_{d}(Z;{\bb L})$.

\begin{proposition}\label{compute2}
If $k\ge n$, then for $Z=S(k\rho_n)/U(n)$, we have
\[
\tilde{H}_{\dim Z}(Z;{\bb L})
={\bb Z}^{a_{n,k}}\oplus {\bb Z}_2^{b_{n,k}},
\]
where $a_{n,k}$ is the number of $n$-tuples $(\mu_1,\dotsc,\mu_n)$ satisfying
\[
0\le \mu_1\le\dotsb\le \mu_n\le k-n-1,\quad
\sum\mu_i+kn\text{ is even},
\]
and $b_{n,k}$ is the number of $n$-tuples satisfying
\[
0\le \mu_1\le\dotsb\le \mu_n\le k-n-1,\quad
\sum\mu_i+kn\text{ is odd}.
\]
\end{proposition}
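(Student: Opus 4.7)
The plan is to equip $Z = S(k\rho_n)/U(n)$ with an explicit CW decomposition and read $\tilde H_d(Z; {\bb L})$ off the cellular chain complex via the Atiyah--Hirzebruch spectral sequence. The first step is to construct canonical row-echelon representatives for $U(n)$-orbits, using the Iwasawa decomposition $GL(n, {\bb C}) = U(n) N$ with $N$ the upper triangular matrices of positive real diagonal. Every rank-$r$ matrix $A \in M_{n \times k}({\bb C})$ then has a unique $U(n)$-orbit representative with pivot columns $1 \le m_1 < \cdots < m_r \le k$, positive real pivot entries, vanishing below and to the left of each pivot, and free complex entries in the remaining positions. Restricting to the unit Frobenius sphere yields open cells $B(m_1, \ldots, m_r) \subset Z$ of real dimension
\[
\dim B(m_1, \ldots, m_r) = 2rk - 2(m_1 + \cdots + m_r) + r - 1,
\]
assembling into a CW structure on $Z$. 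For the top-rank cells ($r = n$), the substitution $\mu_i = m_i - i$ gives $\dim B = d - 2|\mu|$ with $0 \le \mu_1 \le \cdots \le \mu_n \le k - n$, matching the Schubert cells of $G(n, k)$ used in Proposition~\ref{compute}.

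Next I would analyze the cellular chain complex. Cells of rank $r$ have dimension of parity $r - 1$, so odd-rank cells are even-dimensional while even-rank cells are odd-dimensional. The boundary of a rank-$r$ cell meets rank-$(r - 1)$ cells obtained by sending a pivot entry to zero, with incidence coefficients determined by the local model of the degeneration. Computing $H_*(Z; {\bb Z})$ and $H_*(Z; {\bb Z}_2)$ from this complex and feeding into the Atiyah--Hirzebruch spectral sequence
\[
E_2^{p, q} = H_p(Z; \pi_q {\bb L}) \Rightarrow H_{p + q}(Z; {\bb L})
\]
(with $\pi_q {\bb L}(e)$ equal to ${\bb Z}$ for $q \equiv 0 \pmod 4$, ${\bb Z}_2$ for $q \equiv 2 \pmod 4$, and $0$ otherwise), I would read the degree-$d$ column to obtain the decomposition ${\bb Z}^{a_{n,k}} \oplus {\bb Z}_2^{b_{n,k}}$, with surviving generators indexed by tuples $(\mu_1, \ldots, \mu_n)$ satisfying the stated bounds and parity condition.

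The main obstacle is the explicit computation of the cellular differentials and the verification that exactly the $\mu$-tuples with $\mu_n \le k - n - 1$ survive in top degree, strictly tighter than the Schubert bound $\mu_n \le k - n$ of the relative version in Proposition~\ref{compute}. The tightening, together with the parity offset $|\mu| \mapsto |\mu| + kn$, should arise from cancellations induced by the lower-rank cells in the cellular differential, possibly reformulated inductively via Lemma~\ref{hereditary} which identifies $Z_{-1}$ with the analogous quotient $S(k\rho_{n-1})/U(n-1)$.
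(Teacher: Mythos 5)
Your setup coincides with the paper's: the echelon-form representatives for $U(n)$-orbits, the cells $B(m_1,\dotsc,m_r)$ indexed by pivot positions, the dimension formula, the reindexing $\mu_i=m_i-i$, and the Atiyah--Hirzebruch spectral sequence are all exactly what the appendix uses. But the step you explicitly defer --- ``the explicit computation of the cellular differentials and the verification that exactly the $\mu$-tuples with $\mu_n\le k-n-1$ survive in top degree'' --- is the entire content of the proof, and you offer only a speculation (``cancellations induced by the lower-rank cells,'' possibly via Lemma \ref{hereditary}) in its place. That is a genuine gap at the decisive point: without identifying the differential you cannot conclude that $\tilde H_*(Z)$ is free, that it is concentrated in degrees of a single parity (which is what makes the spectral sequence collapse), or that the surviving generators are the full-rank cells with the tightened bound.

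The resolution is a dimension count rather than a delicate incidence computation. A face of $B(m_1,\dotsc,m_r)$ has shape $(m'_1,\dotsc,m'_{r'})$ with $r'\le r$ and each pivot moved weakly toward the last column; imposing that its dimension be exactly $\dim B(m_1,\dotsc,m_r)-1$ forces $r'=r-1$, $m'_i=m_i$ for $i<r$, and the last pivot of $B(m_1,\dotsc,m_r)$ to sit in the last column with no free entries to its right (in the paper's right-end coordinates, $m_r=1$). Every other face has codimension at least two, so the only nonzero cellular differential is $\pa B(m_1,\dotsc,m_{r-1},1)=B(m_1,\dotsc,m_{r-1})$, with coefficient $\pm 1$ since the cell is an orthant-type region of a sphere and exactly one face is crushed. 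The chain complex therefore splits into acyclic two-term pieces plus the unmatched cells, namely the basepoint cell and the full-rank cells whose last pivot is not in the last column --- precisely your condition $\mu_n\le k-n-1$. These generate $\tilde H_*(Z;{\bb Z})$ freely, their dimensions all have the parity of $n+1$, the spectral sequence collapses, and counting cells of dimension $\equiv d\pmod 4$ (resp.\ $\equiv d+2\pmod 4$) yields $a_{n,k}$ (resp.\ $b_{n,k}$). No induction via Lemma \ref{hereditary} is needed, and the parity shift by $kn$ in the statement is just the translation between your parametrization of the surviving cells and the paper's.
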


In the case $k-n$ is odd, which is what we are really interested in, we note that $\sum\mu_i+kn$ and $\sum\mu_i$ have the same parity, so that $a_{n,k}=A_{n,k-1}$ and $b_{n,k}=B_{n,k-1}$ from Proposition \ref{compute}. In the case $k-n$ is even,  $\sum\mu_i+kn$ and $\sum\mu_i+n$ have the same parity.

\begin{proof}
An element in $S(k\rho_n)$ is a $k$-tuple $\xi=(v_1,\dotsc,v_k)$ of vectors in $\rho_n$ satisfying $\|\xi\|^2=\|v_1\|^2+\dotsb+\|v_k\|^2=1$, with the $U(n)$-action $g\xi=(gv_1,\dotsc,gv_k)$. We may regard $\xi$ as a complex $k\times n$-matrix. We claim that we can find a unique representative for $\xi$ of in the row echelon form
\[
\bar{\xi}=
\begin{bmatrix}
\quad & \lambda_1 & \dotsb & *         & \dotsb & *         & \dotsb & *        & \dotsb \\
 &           &        & \lambda_2 & \dotsb & *         & \dotsb & *        & \dotsb \\
 &           &        &           &        & \lambda_3 & \dotsb & *        & \dotsb \\
 &           &        &           &        &           & \ddots & \vdots   & \dotsb \\
 &           &        &           &        &           &        & \lambda_r & \dotsb \\
 &           &        &           &        &           &        &          &
\end{bmatrix},
\]
where the empty spaces are occupied by $0$, $*$ and dots mean complex numbers, $\lambda_i>0$, and the total length of all the entries is $1$, as it was for $\xi$.  To get $\bar{\xi}$, apply the Gram-Schmidt process to the columns of $\xi$ to obtain an orthonormal basis for $\bb{C}^n$ (adding extra vectors if necessary).  If we then apply to $\xi$ the unitary matrix taking this new basis to the standard basis, we get $\bar{\xi}$ as desired.  The orbit space $Z$ is the collection of all matrices $\bar{\xi}$ of the above form.

If $\lambda_j$ appears $m_j$ places from the right end of the matrix (i.e., $\lambda_j$ lies in the $k-m_j+1$ column), then we say that the matrix has {\em shape} $(m_1,\dotsc,m_r)$.  Note that $r$ is the rank of the matrix $\xi$.  For any $r\le n$, $k\ge m_1>\dotsb>m_r>0$, all $\bar{\xi}$ of the shape $(m_1,\dotsc,m_r)$ form a cell $B(m_1,\dotsc,m_r)$ of dimension
\[
\dim B(m_1,\dotsc,m_r)=2(m_1+\dotsb+m_r)-r-1.
\]
Geometrically, the cell is the subset of a sphere of the above dimension determined by $r$ coordinates being nonnegative.  The boundary of this cell consists of those shapes $(m'_1,\dotsc,m'_{r'})$ satisfying $r'\le r$ and $m'_i\le m_i$, with at least one inequality being strict. In homological computation, only those shapes of one dimension less matter. This only occurs when
\[
m_r=1,\quad
r'=r-1,\quad
m'_i=m_i \text{ for }1\le i<r.
\]
Therefore, the only nontrivial boundary map of the cellular chain complex is
\[
\pa B(m_1,\dotsc,m_{r-1},1)=B(m_1,\dotsc,m_{r-1}).
\]
The homology is then freely generated by the shapes that are neither $(m_1,\dotsc,m_{r-1},1)$ nor $(m_1,\dotsc,m_{r-1})$ in the equality above. These are exactly the shapes satisfying $r=n$ (meaning $\xi$ has full rank) and $m_n>1$, and the shape $(1)$ (meaning $r=1$ and $m_1=1$). The shape $(1)$ is the base point of $Z$.

The reduced homology $\tilde{H}_*(Z;{\bb L})$ is the limit of a spectral sequence with
\[
E_2^{p,q}=\tilde{H}_p(Z;\pi_q{\bb L})
=\begin{cases}
\tilde{H}_p(Z;{\bb Z}), &\text{if } q=0\text{ mod }4, \\
\tilde{H}_p(Z;{\bb Z}_2), &\text{if } q=2\text{ mod }4, \\
0, &\text{if } q\text{ is odd}.
\end{cases}
\]
Note that the reduced homology $\tilde{H}_pZ$ is freely generated by shapes satisfying $r=n$ and $m_n>1$. Since the dimensions of such cells have the same parity as $n+1$, $\tilde{H}_pZ$ is nontrivial only if $p$ has the same parity as $n+1$. This implies that $E_2^{p,q}$ already collapses and
\[
\tilde{H}_d(Z;{\bb L})
=(\oplus_{q=0(4)}\tilde{H}_{d-q}(Z;{\bb Z}))\oplus(\oplus_{q=2(4)}\tilde{H}_{d-q}(Z;{\bb Z}_2)).
\]
We have
\[
\oplus_{q=0(4)}\tilde{H}_{d-q}(Z;{\bb Z})
={\bb Z}^{a_{n,k}},
\]
where $a_{n,k}$ is the number of shapes $(m_1,\dotsc,m_n)$ satisfying
\[
m_n>1, \quad
2(m_1+\dotsb+m_n)-n-1=d = 2kn - 1 - n^2\text{ mod }4.
\]
Let $\mu_i=m_{n-i+1}-(i+1)$, so this condition can be interpreted in terms of the nondecreasing sequence of nonnegative integers $(\mu_1,\dotsc,\mu_n)$, as in the statement of the proposition.  Through a similar computation we get the description of $b_{n,k}$ for the case $q=2$ mod $4$.
\end{proof}

For the unreduced homology $H_d(Z;{\bb L})$, we also need to consider the basepoint.  So we need to further take the direct sum with the homology at the base, $H_0(Z;\pi_d{\bb L})=L_d(e)$.  In our case of interest, when $k-n$ is odd, we have $d=n^2+1$ mod $4$. This yields the following.

\begin{corollary} \label{compute2-unreduced}
For $k-n$ odd, the unreduced homology $H_{\dim Z}(Z;{\bb L})$ is given by Proposition \ref{compute2} with an additional summand of
\[
H_0(Z;\pi_d{\bb L})
=L_d(e)
=\begin{cases}
{\bb Z}_2, & \text{if $n$ is odd}, \\
0, & \text{if $n$ is even}.
\end{cases}
\]
\end{corollary}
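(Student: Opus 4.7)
The corollary amounts to supplementing Proposition \ref{compute2} with the contribution of the basepoint. My plan is to first establish a natural splitting of the form
\[
H_d(Z;{\bb L}) \cong \tilde{H}_d(Z;{\bb L}) \oplus H_d(\mathrm{pt};{\bb L}),
\]
so that the reduced summand is handed to us by Proposition \ref{compute2}, and then to identify the point contribution $H_d(\mathrm{pt};{\bb L}) = \pi_d{\bb L} = L_d(e)$ by a straightforward parity computation of $d$ modulo $4$.

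For the splitting, I would use that the CW structure constructed in the proof of Proposition \ref{compute2} contains the $0$-cell $B(1)$, giving a preferred basepoint $x_0 \in Z$. Since $Z$ is connected, the inclusion $\{x_0\} \hookrightarrow Z$ admits the constant map as a retraction, and the cofiber sequence $\{x_0\} \to Z \to Z/\{x_0\}$ then induces, after smashing with ${\bb L}$ and taking homotopy groups, a split short exact sequence
\[
0 \to H_d(\mathrm{pt};{\bb L}) \to H_d(Z;{\bb L}) \to \tilde{H}_d(Z;{\bb L}) \to 0,
\]
yielding the desired direct sum decomposition. Alternatively, the same conclusion follows from the spectral sequence used in the proof of Proposition \ref{compute2}: the extra $E_2$-term at $(p,q)=(0,q)$ is isolated at $p = 0$, and the differentials $d_r$ either leave the first quadrant or, by the same parity considerations as before, must vanish.

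It then remains to compute $L_d(e)$. Under the hypothesis that $k-n$ is odd, the integers $k$ and $n$ have opposite parities, so $kn$ is even and $2kn \equiv 0 \pmod 4$. Hence
\[
d = 2kn - 1 - n^2 \equiv -(n^2+1) \pmod 4,
\]
which equals $2$ when $n$ is odd (as $n^2 \equiv 1 \pmod 4$) and equals $3$ when $n$ is even (as $n^2 \equiv 0 \pmod 4$). The classical $4$-periodic values $L_0(e)={\bb Z}$, $L_1(e)=0$, $L_2(e)={\bb Z}_2$, $L_3(e)=0$ then give $L_d(e) = {\bb Z}_2$ for $n$ odd and $L_d(e) = 0$ for $n$ even, as claimed. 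There is no real obstacle in the argument: once Proposition \ref{compute2} is in hand, the only point to verify is that the basepoint of $Z$ is sufficiently nice for the cofibration splitting, and this is immediate from the CW structure.
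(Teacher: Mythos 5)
Your argument is correct and follows essentially the same route as the paper, which simply observes that the unreduced ${\bb L}$-homology adds the basepoint contribution $H_0(Z;\pi_d{\bb L})=L_d(e)$ to the reduced homology of Proposition \ref{compute2} and then reads off $L_d(e)$ from the parity of $d$. Your parity computation $d\equiv -(n^2+1)\pmod 4$ is in fact the careful version of the paper's stated $d\equiv n^2+1\pmod 4$ (the two differ when $n$ is even, but both give odd $d$ and hence $L_d(e)=0$), so nothing is affected.
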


\end{document}